\newcolumntype{L}{>{\raggedright\arraybackslash}X}
\numberwithin{equation}{section}
\newtheorem{theorem}{Theorem}[section]
\newtheorem{cor}[theorem]{Corollary}
\newtheorem{lemma}[theorem]{Lemma}
\newtheorem{remark}[theorem]{Remark}
\newtheorem{prop}[theorem]{Proposition}
\newtheorem{obs}[theorem]{Observation}
\newtheorem{defin}[theorem]{Definition}
\newcommand{\cD}{\mathcal{D}}
\newcommand{\cK}{\mathcal{K}}
\newcommand{\cN}{\mathcal{N}}
\newcommand{\cP}{\mathcal{P}}
\newcommand{\R}{\mathbb{R}}
\newcommand{\N}{\mathbb{N}}
\newcommand{\Prob}{\mathbb{P}}
\newcommand*{\Otilde}{\tilde{O}}
\DeclareMathOperator{\logeps}{\log\tfrac{1}{\eps}}
\newcommand*{\eps}{\varepsilon}
\DeclareMathOperator*{\argmin}{argmin}
\DeclareMathOperator*{\argmax}{argmax}
\DeclareMathOperator*{\esssup}{ess\,sup}
\renewcommand{\leq}{\leqslant}
\renewcommand{\geq}{\geqslant}
\DeclareMathOperator{\half}{\frac{1}{2}}
\providecommand{\abs}[1]{\left\lvert#1\right\rvert}
\newcommand{\pieta}{\pi_{\eta}}
\newcommand{\proj}{\Pi_{\cK}}
\newcommand{\projlr}[1]{\proj\left[#1\right]}
\newcommand{\sig}{\sigma}
\newcommand{\Dalp}[2]{\cD_{\alpha}(#1 \; \| \; #2)}
\newcommand{\Dalplr}[2]{\cD_{\alpha}\left(#1 \; \| \; #2\right)}
\newcommand{\Dalpshift}[3]{\cD_{\alpha}^{(#1)}(#2 \; \| \; #3)}
\newcommand{\Dalpshiftlr}[3]{\cD_{\alpha}^{(#1)}\left(#2 \; \| \; #3\right)}
\newcommand{\KLshiftlr}[3]{\mathsf{KL}^{(#1)}\left(#2 \; \| \; #3\right)}
\newcommand{\Hell}{\mathsf{H}}
\newcommand{\TV}{\mathsf{TV}}
\newcommand{\KL}{\mathsf{KL}}
\newcommand{\KLarg}[2]{\mathsf{KL}(#1 \; \| \; #2)}
\newcommand{\KLlrarg}[2]{\mathsf{KL}\left(#1 \; \| \; #2\right)}
\newcommand{\CNI}{\mathrm{CNI}}
\def\blfootnote{\gdef\@thefnmark{}\@footnotetext}
\begin{document}

	\title{Resolving the Mixing Time of the Langevin Algorithm to its Stationary Distribution for Log-Concave Sampling}
		
	\author{
		Jason M. Altschuler\footnote{This work was done in part while JA was an intern at Apple during Summer 2021. JA was also supported in part by NSF Graduate Research Fellowship 1122374, a TwoSigma PhD Fellowship, and a Faculty Fellowship from the NYU Center for Data Science.}\\
		NYU\\
		\texttt{ja4775@nyu.edu}
		\and
		Kunal Talwar \\
		Apple \\
		\texttt{ktalwar@apple.com}
	}
	\date{}
	\maketitle

\begin{abstract}
	Sampling from a high-dimensional distribution is a fundamental task in statistics, engineering, and the sciences. A canonical approach is the Langevin Algorithm, i.e., the Markov chain for the discretized Langevin Diffusion. This is the sampling analog of Gradient Descent. Despite being studied for several decades in multiple communities, tight mixing bounds for this algorithm remain unresolved even in the seemingly simple setting of log-concave distributions over a bounded domain. This paper completely characterizes the mixing time of the Langevin Algorithm to its stationary distribution in this setting (and others).
	This mixing result can be combined with any bound on the discretization bias in order to sample from the stationary distribution of the continuous Langevin Diffusion. In this way, we disentangle the study of the mixing and bias of the Langevin Algorithm.

	\par Our key insight is to introduce a technique from the differential privacy literature to the sampling literature. This technique, called Privacy Amplification by Iteration, uses as a potential a variant of R\'enyi divergence that is made geometrically aware via Optimal Transport smoothing. This gives a short, simple proof of optimal mixing bounds and has several additional appealing properties. First, our approach removes all unnecessary assumptions required by other sampling analyses.
	Second, our approach unifies many settings: it extends unchanged if the Langevin Algorithm uses projections, stochastic mini-batch gradients, or strongly convex potentials (whereby our mixing time improves exponentially). Third, our approach exploits convexity only through the contractivity of a gradient step---reminiscent of how convexity is used in textbook proofs of Gradient Descent. In this way, we offer a new approach towards further unifying the analyses of optimization and sampling algorithms. 
\end{abstract}

	\newpage
	\setcounter{tocdepth}{2}
	\tableofcontents

	\newpage
	\normalsize

\section{Introduction}\label{sec:intro}

Generating samples from a high-dimensional distribution is a ubiquitous problem with applications in diverse fields such as machine learning and statistics~\citep{andrieu2003introduction,robert1999monte}, numerical integration and scientific computing~\citep{james1980monte,jerrum1996markov,liu2001monte}, and differential privacy~\citep{McSherryT07,DworkRo14}, to name just a few. Here we revisit this problem of sampling in one of the most foundational and well-studied settings: the setting where the target distribution $\pi$ is log-concave, i.e., 
\[
\pi(x) \propto e^{-f(x)}
\]
for some convex potential $f : \R^d \to \R$. The purpose of this paper is to investigate the mixing time of a canonical algorithm---the Langevin Algorithm---for this problem.

\paragraph*{The Langevin Algorithm.} A celebrated, classical fact is that one can generate a sample from $\pi$ via the \emph{Langevin Diffusion}, i.e., the solution to the Stochastic Differential Equation
given by
\begin{align}
	dX_t = -\nabla f(X_t) + \sqrt{2}d B_t\,,
	\label{eq:ld}
\end{align}
where $B_t$ is a standard Brownian motion on $\R^d$. More precisely, under mild conditions on the tail growth of $f$, the distribution of $X_t$ tends to $\pi$ in the limit $t \to \infty$, see e.g.,~\citep{roberts1996exponential,bhattacharya1978criteria}.

\par Although this continuous-time Markov chain is not directly implementable, it leads naturally to an algorithm: run a discretization of the Langevin Diffusion~\eqref{eq:ld} for a sufficiently long time. This idea dates back at least to Parisi's work in 1981~\citep{parisi1981correlation}.
In its simplest form, the resulting algorithm is the discrete-time Markov chain given by
\begin{align}
	X_{t+1} = X_t - \eta \nabla f(X_t) + Z_t\,,
	\label{eq:la-simple}
\end{align}
where the stepsize $\eta > 0$ is the discretization parameter, and $Z_t \sim \cN(0,2\eta I_d)$ are independent Gaussians.
In words, this algorithm is identical to the Langevin Diffusion except that it uses the same gradient for $\eta$ units of time before recomputing.
Note that this algorithm is applicable in the common practical setup where $\pi$ is only known up to its normalizing constant (e.g., as is the case in Bayesian statistics), since this normalization is irrelevant for computing $\nabla f$. 
\par A more general form of this discretized update~\eqref{eq:la-simple} enables handling constrained distributions via projection and large-scale finite-sum potentials $f = \sum_{i=1}^n f_i$ via stochastic gradients. Since all of our results apply without change in this more general setup, we slightly abuse notation to call this the Langevin Algorithm (rather than the Projected Stochastic Langevin Algorithm).

\begin{defin}\label{lem:def-langevin-alg}\label{def:la}
	For a convex set $\cK \subseteq \R^d$, potential $f = \sum_{i=1}^n f_i : \cK \to \R$, batch size $b \leq n$, stepsize $\eta > 0$, and initialization $X_0 \in \cK$, the \emph{Langevin Algorithm} is the Markov chain
	\begin{align}
		X_{t+1} = \proj\left[ X_t - \eta G_t + Z_t \right],
		\label{eq:la}
	\end{align}
	where $\proj$ denotes the Euclidean projection onto $\cK$, $Z_t \sim \cN(0,2\eta I_d)$ are independent Gaussians, and $G_t = \frac{1}{b} \sum_{i \in B_t} \nabla f_i(X_t)$ are the average gradients on uniform random batches $B_t \subseteq [n]$ of size $b$.
\end{defin}

This Langevin Algorithm has an immense literature in part because it has been studied in several communities, in either identical or similar forms. For example, this algorithm is the direct analog of (Projected Stochastic) Gradient Descent in convex optimization, the only difference being the noise $Z_t$. As another example, this algorithm (with different noise scaling) has been studied extensively in the differential privacy literature under the names (Projected Stochastic) Noisy Gradient Descent and Differentially Private Gradient Descent. Note that in the sampling, scientific computing, and PDE literatures, this algorithm goes by various other names, such as (Projected Stochastic variants of) the Langevin Monte Carlo, the Unadjusted Langevin Algorithm, the Overdamped Langevin Algorithm, and the Forward Euler or Euler-Maruyama discretization of the Langevin Diffusion.

\begin{figure}
	\centering
	\includegraphics[width=0.55\linewidth]{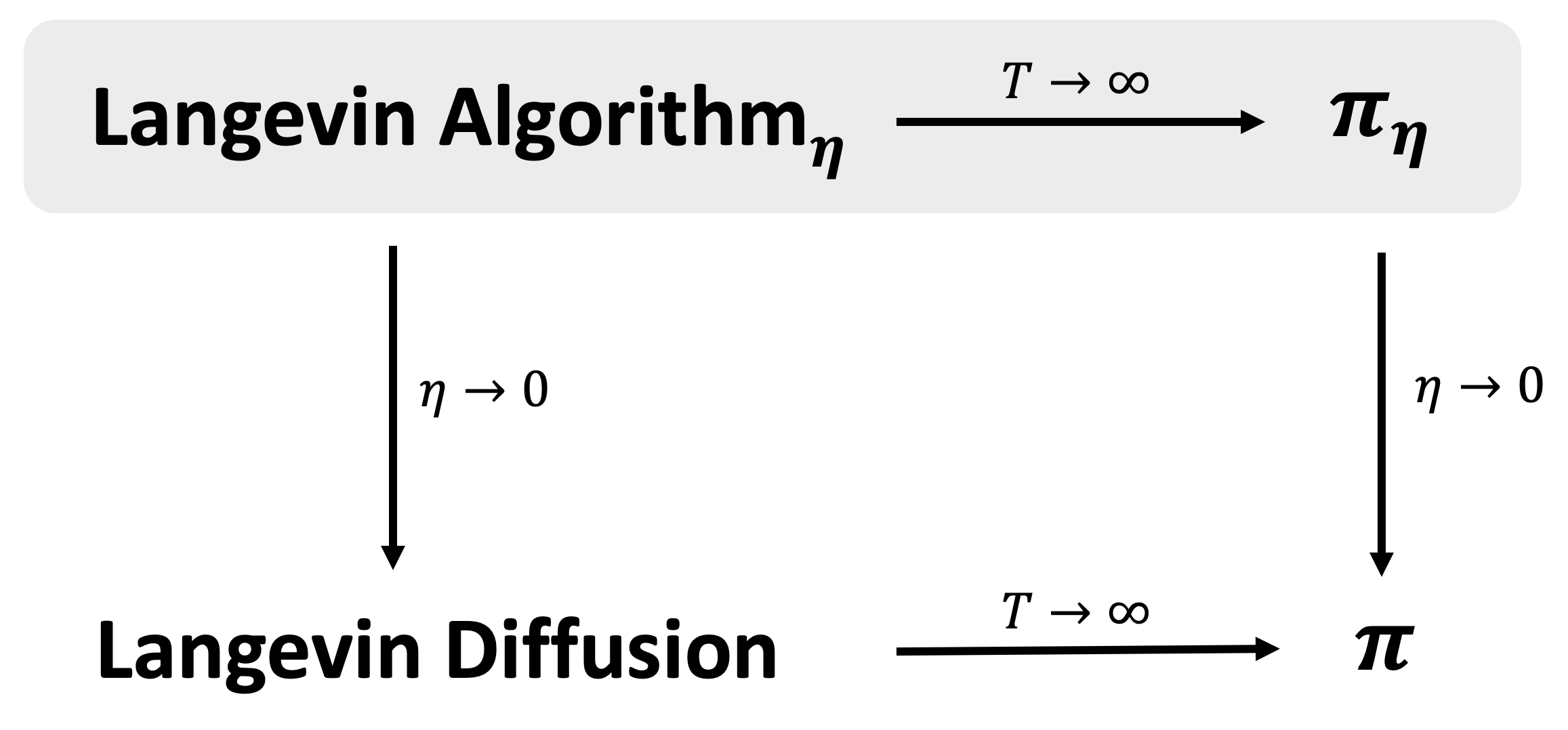}
	\caption{ \footnotesize \textbf{Bottom:} The Langevin Diffusion is a continuous-time Markov chain with stationary distribution $\pi$. \textbf{Top:} The Langevin Algorithm is a discrete-time Markov chain with stationary distribution $\pieta$. This paper establishes tight bounds for the Langevin Algorithm to mix to its stationary distribution $\pieta$ (the boxed arrow) for any non-trivial discretization stepsize $\eta > 0$ in the setting where $\pi$ is (strongly) log-concave. If desired, this result can be combined with any bound on the discretization bias in order to sample from $\pi$. In this way, we aim to disentangle the study of the mixing and bias of the Langevin Algorithm.}
	\label{fig:langevindiagram}
\end{figure}

\paragraph*{Rapid mixing of the Langevin Algorithm?} 
While the asymptotic convergence of the Langevin Algorithm is classically understood~\citep{roberts1996exponential,bhattacharya1978criteria},
non-asymptotic convergence bounds (a.k.a., mixing time bounds) are significantly more challenging. Yet this question is essential since algorithms are implemented in non-asymptotic regimes, and moreover since it is in general impossible to obtain an a posteriori error estimate from a sample.

\par There are two types of ``mixing'' results that one might hope to show: convergence of the Langevin Algorithm to its stationary distribution $\pieta$, or to the target distribution $\pi$. Note that these two distributions are different for any discretization stepsize $\eta > 0$; the error is called the bias of the Langevin Algorithm, see Figure~\ref{fig:langevindiagram}. Henceforth, we refer to the convergence of the Langevin Algorithm to its stationary distribution $\pieta$ as ``mixing'' since this is the standard notion of mixing for a Markov chain, and we disambiguate convergence to $\pi$ as ``mixing to $\pi$''. (Note that much of the literature calls the latter notion ``mixing'' even though this is a slight abuse of notation.)

\par There has been an incredible flurry of activity around the mixing time of the Langevin Algorithm since the the first results in the seminal paper \citep{Dalalyan16}. However, tight mixing bounds
(to either $\pieta$ or $\pi$)
remain open even in the seemingly simple setting of strongly convex and smooth potentials over $\R^d$---let alone in less idealized settings where the potential is convex rather than strongly convex, and/or the domain is a subset of $\R^d$ rather than the whole space, and/or the gradients used by the Langevin Algorithm are stochastic rather than exact. See the prior work section \S\ref{ssec:intro-prev} for details.

\par The purpose of this paper is to determine the mixing time of the Langevin Algorithm to its stationary distribution $\pieta$ in all these settings. If desired, these mixing bounds can then be combined with any bound on the discretization bias in order to sample from $\pi$, see Figure~\ref{fig:langevindiagram}. This differs from most previous work which has primarily focused on directly analyzing mixing bounds to $\pi$. In this way, on one hand we answer a natural question about the Markov chain defining the Langevin Algorithm, and on the other hand we aim to disentangle the study of its mixing and bias. See the discussion in \S\ref{sec:discussion} for future directions about understanding the bias.

The rest of the introduction is organized as follows: we detail our contributions in \S\ref{ssec:intro-cont}, our analysis techniques in \S\ref{ssec:intro-tech}, and then further contextualize with related work in \S\ref{ssec:intro-prev}.

\subsection{Contributions}\label{ssec:intro-cont}

The main result of this paper is a characterization of the mixing time of the (Projected Stochastic) Langevin Algorithm to its stationary distribution $\pieta$ for discretization parameter $\eta > 0$ in the log-concave setting.
For simplicity, we state this result below for mixing in the total variation metric. Extensions to other notions of distance are straightforward and summarized in
Table~\ref{tab:mixing}.
In what follows, we denote the mixing time for a divergence or metric $D$ by $T_{\mathrm{mix}, \; D}(\eps)$; this is the smallest $T \in \N$ for which $D(X_T \| \pieta) \leq \eps$ for any initialization $X_0$ of the Langevin Algorithm.

\begin{theorem}[Informal versions of Theorems~\ref{thm:convex-ub} and~\ref{thm:convex-lb}]\label{thm:intro-convex}
	For any convex set $\cK \subset \R^d$ of diameter $D$, any $M$-smooth convex potentials $f_1,\dots,f_n : \cK \to \R$, any batch size $b \leq n$, and any stepsize $\eta \leq 2/M$, the Langevin Algorithm in Definition~\ref{def:la} mixes to its stationary distribution $\pieta$ in time
	\[
	T_{\mathrm{mix}, \,\TV}\left( \frac{1}{4} \right)
	\asymp 
	\frac{D^2}{\eta}\,.
	\]
\end{theorem}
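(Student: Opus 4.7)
My plan is to implement the PABI strategy advertised in the introduction, with a shifted R\'enyi divergence as the Lyapunov potential. For $\alpha > 1$ and $z \geq 0$, define $\Dalpshift{z}{\mu}{\nu} := \inf \Dalplr{\mu'}{\nu}$, where the infimum is over distributions $\mu'$ with $W_\infty(\mu', \mu) \leq z$. Writing $\mu_t := \mathrm{law}(X_t)$, I track the potential $\Phi_t := \Dalpshift{z_t}{\mu_t}{\pieta}$ along a decreasing shift schedule $D = z_0 > z_1 > \cdots > z_T = 0$. Since $\mathrm{diam}(\cK) = D$ and $\pieta$ is supported on $\cK$, the Dirac $\mu_0 = \delta_{X_0}$ satisfies $W_\infty(\pieta, \mu_0) \leq D$, so $\Phi_0 = 0$.

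\textbf{The key one-step lemma.} Set $a_t := z_{t-1} - z_t \geq 0$. The central claim is
\begin{align*}
\Phi_t \leq \Phi_{t-1} + \frac{\alpha a_t^2}{4\eta}.
\end{align*}
This decomposes into two ingredients. (i) The deterministic part of one Langevin step, $x \mapsto \proj[x - \eta G_t(x)]$, is $1$-Lipschitz for every realization of the mini-batch $B_t$: convexity and $M$-smoothness of each $f_i$ together with $\eta \leq 2/M$ make the gradient step non-expansive, and projection onto a convex set is non-expansive. Coupling the two chains with the same $B_t$ therefore preserves any $W_\infty$-coupling, hence preserves shifted R\'enyi at the same shift. (ii) Adding independent Gaussian noise $\cN(0, 2\eta I_d)$ converts $a_t$ units of $W_\infty$-shift into at most $\alpha a_t^2 / (4\eta)$ units of R\'enyi divergence, by the exact Gaussian-vs.-Gaussian R\'enyi formula. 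With the balanced schedule $a_t = D/T$, the one-step lemma telescopes to
\begin{align*}
\Dalplr{\mu_T}{\pieta} \;=\; \Phi_T \;\leq\; \frac{\alpha D^2}{4 \eta T}.
\end{align*}
Taking $\alpha = 2$ and invoking $2\,\TV^2 \leq \KL \leq \cD_2$ yields $\TV(\mu_T, \pieta) \leq 1/4$ once $T \geq 4 D^2/\eta$.

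\textbf{Lower bound and main obstacle.} For the matching $\Omega(D^2/\eta)$ lower bound I would use the most trivial hard instance: $f \equiv 0$ on a segment (or ball) $\cK$ of diameter $D$, for which $\pieta$ is uniform on $\cK$. Initializing $X_0$ at an endpoint, the chain becomes a reflected Gaussian walk with per-step variance $2\eta$, so after $T \ll D^2/\eta$ steps the displacement of $X_T$ from $X_0$ is $O(\sqrt{\eta T}) = o(D)$ with high probability, while $\pieta$ places $\Omega(1)$ mass on the far half of $\cK$; this forces $\TV(\mu_T, \pieta) > 1/4$. The main obstacle is the key one-step lemma: one must set up the shifted R\'enyi potential correctly and import the $W_\infty$-smoothing argument from differential privacy so that (a) all non-noise operations cost zero because they are non-expansive, and (b) Gaussian noise is charged at exactly the optimal rate $\alpha a_t^2/(4\eta)$. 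The entire generality claimed---projections, stochastic mini-batch gradients, arbitrary convex smooth potentials---then drops out essentially for free from the single structural fact that the non-noise part of the update is $1$-Lipschitz.
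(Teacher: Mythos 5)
Your upper-bound strategy is exactly the paper's: use the shifted R\'enyi divergence as a Lyapunov function, show that the non-noise part of each step is nonexpansive (contraction-reduction), charge the Gaussian noise at rate $\alpha a_t^2/(4\eta)$ (shift-reduction), and telescope along a decreasing shift schedule starting from $z_0=D$. Tracking the shifted divergence directly against the fixed point $\pieta$ is a slightly cleaner packaging than the paper's two-coupled-chains formulation, but it is the same argument.

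However, there is a real (if fixable) gap in step (i). You treat the "deterministic part" of one step as $x \mapsto \proj[x-\eta G_t(x)]$ followed by noise, i.e., you are implicitly analyzing the update $X_{t+1}=\proj[X_t-\eta G_t(X_t)]+Z_t$. The actual algorithm in Definition~\ref{def:la} is $X_{t+1}=\proj[X_t-\eta G_t(X_t)+Z_t]$: the noise is sandwiched \emph{between} the gradient step and the projection, so you cannot write a single contraction-then-noise decomposition. The paper handles this by introducing an auxiliary CNI $Y_{t+1}=\phi_{t+1}(Y_t)+Z_t$ with $\phi_{t+1}(y)=\proj[y]-\eta G_t(\proj[y])$ (so the projection sits at the \emph{start} of each step), showing $X_t=\proj[Y_t]$, and finishing with a data-processing step; alternatively, as noted in the paper's footnote, one can apply contraction-reduction twice per step (once for the gradient map before the noise, once for the projection after the noise). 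Your one-step bound $\Phi_t\leq\Phi_{t-1}+\alpha a_t^2/(4\eta)$ is still true via either fix, but your stated decomposition does not literally apply to the algorithm as defined.

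A smaller issue appears in your lower bound: initializing at an endpoint of $\cK$ makes the chain a \emph{truncated} (not free) Gaussian walk, and the clean $O(\sqrt{\eta T})$ displacement bound does not follow immediately from standard martingale concentration. The paper sidesteps this by initializing at $-D/4$ so that, on the high-probability event that the free walk never leaves $(-D/2,0)$, no projection occurs and Doob's inequality applies directly. Your argument has the right core idea (reachability) but needs this tweak to be rigorous.

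Finally, a cosmetic difference: you pass through $\alpha=2$ and $2\,\TV^2\leq\KL\leq\cD_2$; the paper uses $\alpha=1$ and Pinsker directly, getting a slightly better constant ($T\leq 2D^2/\eta$ versus your $4D^2/\eta$).
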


This result is proved by establishing an upper bound (Theorem~\ref{thm:convex-ub}) and lower bound (Theorem~\ref{thm:convex-lb}) that match up to a constant factor.

\paragraph*{Discussion of dependence on parameters.} 

\begin{itemize}
	\item \underline{Dependence on error $\eps$.} As is standard, Theorem~\ref{thm:intro-convex} is stated for mixing to constant error, here $\eps = 1/4$. A basic fact about Markov chains is that a mixing bound to constant error can be boosted to arbitrarily small error $\eps > 0$ at an exponential rate, see, e.g.,~\citep[\S 4.5]{peres2017}. Specifically, Theorem~\ref{thm:convex-ub} implies $T_{\mathrm{mix}, \,\TV}\left( \eps \right) \leq  \frac{D^2}{\eta} \log \frac{1}{\eps}$.
	\item \underline{Dependence on diameter $D$ and stepsize $\eta$.} Note that $\Theta(D^2/\eta)$ is the number of iterations it takes for a univariate random walk with i.i.d.\;$\cN(0,2\eta)$ Gaussian increments to move a distance $D$ from its initialization. Thus, $\Theta(D^2/\eta)$ is the number of iterations it takes for the Langevin Algorithm to just reach the opposite side of a univariate interval $\cK = [-D/2,D/2]$ even in the simple setting of zero potentials (a.k.a., uniform $\pi$). Since reachability is obviously a pre-requisite for mixing, $\Theta(D^2/\eta)$ is therefore a lower bound on the mixing time. (This is the intuition behind the lower bound in Theorem~\ref{thm:convex-lb}.) The matching upper bound in Theorem~\ref{thm:convex-ub} shows that, modulo a constant factor, the Langevin Algorithm takes the same amount of time to \emph{mix} as it does to just \emph{reach} the opposite end of $\cK$.
	\item \underline{Dependence on other parameters.} 
	Note that the fast mixing in Theorem~\ref{thm:intro-convex} depends on no other parameters, and in particular has no dimension dependence for $\eta$ of constant size (a common regime in differentially private optimization). 
	However, if one seeks to sample from $\pi$, then the discretization bias $\pieta \approx \pi$ must be small, which requires $1/\eta$ to scale polynomially in the dimension and inverse accuracy. This is why mixing bounds to $\pi$ inevitably have polynomial dependence on the dimension and inverse accuracy. Theorem~\ref{thm:intro-convex} clarifies that these polynomial dependencies arise solely through the single factor of $1/\eta$.
\end{itemize}

\paragraph*{Streamlined analysis and implications.} We prove Theorem~\ref{thm:convex-ub} by introducing a technique from the differential privacy literature to the sampling literature. See \S\ref{ssec:intro-tech} below for an overview of this technique, called Privacy Amplification by Iteration.
This approach gives a short, simple proof of optimal mixing bounds---and moreover has several additional appealing properties, which are a mix of technical and pedagogical in nature:

\setlength{\tabcolsep}{9pt}
\renewcommand{\arraystretch}{1.5}
\begin{table}[]
	\centering
	\begin{tabular}{c|cc}
		& Log-concave & Strongly log-concave \\ \hline
		$\TV$, $\KL$, $\chi^2$, $\Hell$
		& $\frac{D^2}{\eta} \logeps$
		& $\frac{1}{\eta m} \log \frac{D}{\eta \eps}$
		\\
		$\cD_{\alpha}$
		&
		$\frac{D^2}{\eta} (\alpha + \logeps) $
		& $\frac{1}{\eta m} \log \frac{\alpha D}{\eta \eps}$
	\end{tabular}
	\caption{\footnotesize Summary of our mixing time bounds for the (Projected Stochastic) Langevin Algorithm to its stationary distribution $\pieta$. Here, $\eta$ is the stepsize, $\eps$ is the mixing error, $D$ is the diameter of the convex set $\cK$, $m$ is the strong-convexity constant of the potential $f$, $M$ is the smoothness constant of $f$, and $\alpha$ is the R\'enyi parameter. We take $\eta \leq 1/M$ here to simplify the asymptotics.
	 These results apply to stochastic gradients with arbitrary batch size. Extensions to unconstrained settings are straightforward, see Appendix~\ref{app:diam}. We report mixing times for Total Variation ($\TV$), Kullback-Leibler ($\KL$), Chi-Squared ($\chi^2$), Hellinger ($\Hell$), and R\'enyi divergence ($\cD_{\alpha}$). For full details, see Corollary~\ref{cor:convex-mixing} and Theorem~\ref{thm:sc-ub}
	 for the cases of log-concave and strongly log-concave targets $\pi$, respectively.
	}
	\label{tab:mixing}
\end{table}

\begin{itemize}
	\item \underline{Minimal assumptions.} 
	Theorem~\ref{thm:convex-ub} requires an essentially minimal set of assumptions.
	This is in part because of our direct analysis approach, and also in part because our analysis distenagles the study of the mixing and the bias of the Langevin Algorithm. Indeed, assumptions like curvature of the set and Lipschitzness of the potential\footnote{In the constrained setting, Lipschitzness is implied by smoothness; however, in the unconstrained setting, Lipschitzness is not implied and yet our techniques still apply despite this.} appear necessary for discretization bounds (see, e.g,.~\citep{bubeck2018sampling})---but as we show, these are unnecessary for fast mixing. Note also that our result requires no warm start.
	\par In fact, Theorem~\ref{thm:convex-ub} only makes three assumptions. The first is that $\cK$ is bounded; this is easily relaxed, see the following paragraph. The second is that the potential $f$ and the set $\cK$ are convex; it is unavoidable to have at least some type of assumption in this direction since arbitrary non-convexity leads to computational hardness due to reductions to non-convex optimization (see \S\ref{sec:discussion}). The third is that the stepsize $\eta \leq 2/M$ is not too large, else $\pieta$ is a meaningless approximation  of $\pi$ and in fact is even transient in the unconstrained setting.\footnote{This can be seen even in $1$ dimension with the quadratic potential $f(x) = \tfrac{M}{2}x^2$ on $\cK = [-D/2,D/2]$. Briefly, if $\eta = (2+\eps)/M$, then $\pieta$ concentrates away from $0$, whereas $\pi$ concentrates around $0$, in fact arbitrarily so as $M \to \infty$. Thus $\pi$ and $\pieta$ can be arbitrarily different if $\eta > 2/M$. Moreover in the unconstrained setting, this Markov chain is transient.}
	\item \underline{Unification of different problem settings.} Our analysis technique extends nearly unchanged and yields tight bounds regardless of whether the Langevin Algorithm uses mini-batch stochastic gradients, projections, or strongly log-concave targets $\pi$. 
	In the stochastic setting, our mixing result is identical 
	 regardless of the batch size. In the unconstrained setting, our mixing result is identical except with $D$ replaced by the diameter of a ball that captures all but $\eps$ mass of $\pieta$; such a diameter proxy is finite and leads to tight mixing bounds, details in Appendix~\ref{app:diam}. And in the strongly log-concave setting, our mixing result improves exponentially in $D$; details in \S\ref{sec:sc}.
	\item \underline{Pedagogical hope towards unifying sampling and optimization.} While the problems of optimization and sampling are well-known to exhibit appealing syntactic similarities in terms of problem formulations and algorithms, it is less clear to how these correspondences should extend to analyses and error bounds. One particularly popular approach, 
		initiated by the seminal work~\citep{VempalaW19}, exploits the JKO Theorem which states that the Langevin Diffusion is equivalent to a KL gradient flow over the space $\cP(\R^d)$ of probability distributions~\citep{jordan1998variational}. 
		See the previous work section \S\ref{ssec:intro-prev} for details. The present paper offers a different approach towards unifying the analyses and error bounds of sampling and optimization algorithms that does not require lifting from $\R^d$ to $\cP(\R^d)$, thereby avoiding the (beautiful yet) high-powered associated mathematical machinery. 
		From an analysis perspective, we exploit convexity through the contractivity of a gradient update (in a different way from~\citep{Dalalyan16,Dalalyan2017further}). This is reminiscent of how convexity is used in textbook proofs of Gradient Descent (see, e.g.,~\citep[pages 271 and 279]{bubeck2015convex} respectively for the convex and strongly convex settings),
		and leads us to error bounds for sampling that in many ways mirror classical results in optimization, see Table~\ref{tab:sampling-vs-opt}.\footnote{In fact, it is worth mentioning that our techniques yield analyses and error bounds for sampling which are even simpler than those for optimization in the setting of stochastic gradients---since our sampling bounds are unchanged for any minibatch size $b$, whereas optimization bounds certainly are not.} 
\end{itemize}

\renewcommand{\arraystretch}{1.5}
\begin{table}[]
	\centering
	\begin{tabular}{c|cc}
		& \textbf{Optimization} & \textbf{Sampling}
		\\ \hline
		Task
		&
		Optimize $\min_{x} f(x)$
		&
		Sample $\pi(x) \propto e^{-f(x)}$
		\\
		Simple first-order algorithm
		&
		Gradient Descent
		& 
		Langevin Algorithm
		\\
		Algorithm update
		&
		$X_{t+1} = \proj [X_t - \eta \nabla f(X_t)]$
		&
		$X_{t+1} = \proj [X_t - \eta \nabla f(X_t) + \cN(0,2\eta I_d)]$
		\\
		``Error'' for $f$ convex
		& $MD^2/T$
		& $e^{-T/MD^2} $ (Theorem~\ref{thm:convex-ub})
		\\
		``Error'' for $f$ strongly convex
		& $ c^T MD^2 $
		& $ c^T MD^2 $ (Theorem~\ref{thm:sc-ub})
		\\
	\end{tabular}
	\caption{\footnotesize The research areas of optimization and sampling exhibit appealing syntactic similarities in terms of problem formulations and algorithms. This paper analyzes the mixing of the Langevin Algorithm by exploiting convexity of the potential $f$ (a.k.a., log-concavity of $\pi$) in a way that is reminiscent of the classical analysis of Gradient Descent. This leads to syntactically similar error bounds. 
		To highlight similarities, here the stepsize $\eta \asymp 1/M$, the optimization ``error'' is reported here for function suboptimality $f(X_T) - f^*$~\citep[Theorems 3.3 and 3.12]{bubeck2015convex}, and the sampling ``error'' is reported for mixing to $\pieta$. 
	Here, $T$ is the number of iterations, $D$ is the diameter, $m$ is the strong convexity of $f$, $M$ is the smoothness of $f$, and $c = \max_{\lambda \in \{m,M\}} |1 - \eta \lambda| \leq 1$ is the contraction rate for a Gradient Descent step.
	}
	\label{tab:sampling-vs-opt}
\end{table}

\subsection{Techniques}\label{ssec:intro-tech}

\subsubsection{Upper bound on mixing time}

We establish rapid mixing by introducing to the sampling community a technique from differential privacy. This technique, called Privacy Amplification by Iteration (PABI), was originally proposed in~\citep{pabi} and then recently developed in~\citep{AltTal22dp} into the form used here which is more amenable to sampling. See the preliminaries \S\ref{ssec:prelim:pabi} for a discussion of these differences and for full technical details about PABI. Here we briefly overview this PABI technique and how we use it---in an effort both to promote accessibility to the sampling community, as well as to emphasize the simplicity of this mixing time argument.

\paragraph*{The Langevin Algorithm as a Contractive Noisy Process.} Rather than view the Langevin Algorithm as a discretized approximation of a continuous SDE, we analyze it directly by exploiting the fact that, modulo a slight shifting of the process described in \S\ref{ssec:convex-ub}, the update equation~\eqref{eq:la} of the Langevin Algorithm decomposes into two simple operations:
\begin{itemize}
	\item \emph{A contractive map}, namely the composition of a projection operation and a Stochastic Gradient Descent update, both of which are contractive.
	\item \emph{A noise convolution}, namely add Gaussian noise $\cN(0,2\eta I_d)$.
\end{itemize}
A simple but key insight is that it is harder to distinguish two random variables $X,X'$ from each other after applying either of these operations.
Indeed, applying the same contraction map to two points can only bring them closer together; and adding the same noise to two random variables can only lower the signal-to-noise ratio. Such observations naturally lend themselves to mixing-time analyses, where by definition, one seeks to understand how distinguishable $X_T,X_T'$ are if they are the final iterates of the same Markov chain run on different initializations $X_0,X_0'$. 
\par The key issue is how to quantify this intuition: how much less distinguishable do two random variables become after the (repeated, interleaved) application of a contractive map or noise convolution? This is precisely the sort of question that the PABI technique is designed for.

\paragraph*{Lyapunov function.} 
The PABI technique measures progress through the \emph{shifted R\'enyi divergence}, which is a variant of the classical R\'enyi divergence that is smoothed by the Optimal Transport (a.k.a., Wasserstein) distance\footnote{Recall that the infinity-Wasserstein distance $W_{\infty}(\mu,\mu')$ between distributions $\mu,\mu'$ on $\R^d$ is the smallest $z \geq 0$ for which there exists a joint distribution $P$ with marginals $X \sim \mu$, $X' \sim \mu'$ satisfying $\|X-X'\| \leq z$ almost surely. Throughout this paper, $\|\cdot\|$ denotes the Euclidean norm.}.
For simplicity, here we describe PABI in the special case of shifted KL divergence; see \S\ref{ssec:prelim:pabi} for full details on the shifted R\'enyi divergence, which generalizes the shifted KL divergence by bounding ``all moments'' simultaneously instead of just ``one moment''.

\par The shifted KL divergence is defined as
\[
	\KLshiftlr{z}{\mu}{\nu} := \inf_{\mu' \; : \; W_{\infty}(\mu,\mu') \leq z} \KLlrarg{\mu'}{\nu}\,.
\] 
In words, the shift $z \geq 0$ makes the KL divergence ``geometrically aware''. Indeed, a major drawback of the KL divergence is that it is invariant under a permutation of points in the underlying space $\R^d$. The shifted KL divergence is more natural for analyzing sampling algorithms since closeness of points is captured by the Optimal Transport shift. As a toy illustrative example, consider two Dirac distributions at points $x$ and $x + \eps$, where $\eps$ is a vector of small norm---while the KL divergence between these two Diracs is infinite, their shifted KL divergence is $0$ for any shift $z \geq \|\eps\|$.

\paragraph*{Tracking the Lypaunov function.} Operationally, the PABI technique  is based on two core lemmas, which disentangle how the two aforementioned operations (contraction and convolution) affect the distinguishability of the iterates, as measured in shifted KL divergence. We recall both lemmas here in a simplified form that suffices for the present purpose of mixing analyses; see \S\ref{ssec:prelim:pabi} for a discussion of the differences to PABI in differential privacy analyses.

\begin{itemize}
	\item \emph{The contraction-reduction lemma} establishes that for any contraction $\phi$,
	\[
	\KLshiftlr{z}{\phi_{\#} \mu}{\phi_{\#} \nu}
	\leq
	\KLshiftlr{z}{\mu}{\nu}\,.
	\]
	Intuitively, this can be viewed as a generalization of the classical data-processing inequality for the KL divergence to the shifted KL divergence---so long as the data-processing is contractive.
	\item \emph{The shift-reduction lemma} establishes that for any shift $a \geq 0$,
	\[
	\KLshiftlr{z}{\mu \ast \cN(0,\sig^2 I_d)}{ \nu \ast \cN(0,\sig^2 I_d)}
	\leq
	\KLshiftlr{z+a}{\mu}{\nu}
	+
	\frac{a^2}{2\sig^2}\,.
	\]
	Intuitively, this can be thought of as a strengthening of the trivial bound $	\KL(\mu \ast \xi \| \nu \ast \xi) 
	\leq
	\KL(\mu \otimes \xi \| \nu \otimes \xi)
	= 
	\KL(\mu \| \nu)+ \KL(\xi \| \xi)$
	which follows from the data-processing and tensorization properties of the KL divergence. In words, the shift-reduction lemma sharpens this trivial bound by moving $a$ units of ``displacement'' between $\mu$ and $\nu$ from the first term to the second term---lowering the KL divergence between $\mu$ and $\nu$, at a penalty given by how well the noise $\xi = \cN(0,\sig^2I_d)$ masks a displacement of size $a$. Indeed, this is precisely how the penalty arises, since $\sup_{w : \|w\| \leq a} \KLarg{\cN(w,\sig^2 I_d)}{\cN(0,\sig^2 I_d)} = a^2/(2\sig^2)$. Operationally, this increase in shift allows us to ``hide'' the gap between two different Langevin initializations, as described next.
\end{itemize}

Combining these two lemmas bounds the joint effect of the two operations---a.k.a., the effect of one iteration of the Langevin Algorithm:
\[
\KLshiftlr{z}{(\phi_{\#} \mu) \ast \cN(0,2\eta I_d)}{(\phi_{\#} \nu) \ast \cN(0,2\eta I_d)}
\leq
\KLshiftlr{z + a}{\mu}{\nu} + \frac{a^2}{4\eta}\,,
\]
for any choice of $a \geq 0$.
In particular, we can analyze $T$ iterations of the Langevin Algorithm by repeating this argument $T$ times and setting $a = D/T$, yielding
\[
\KLlrarg{X_T}{X_T'}
=
\KLshiftlr{0}{X_T}{X_T'}
\leq
\KLshiftlr{D}{X_0}{X_0'}
+
T\frac{a^2}{4\eta}
= 
\frac{D^2}{4\eta T}\,.
\]
Above, the first equality is because $0$-shifted KL is the standard KL divergence; and the final equality is because the $D$-shifted KL vanishes if the two distributions are supported on a set of diameter $D$. For constant $\eps$, this is already enough to conclude the claimed $\KL$ mixing time bound of $T_{\mathrm{mix},\KL}(\eps) \lesssim D^2/\eta$. For sub-constant $\eps$, a standard boosting argument improves the dependence on $\eps$ from $1/\eps$ to $\log 1/\eps$.

\paragraph*{Extensions.} This argument readily extends to different settings. For example, it applies regardless of stochastic gradients because, for any choice of minibatch, a Stochastic Gradient Descent update is contractive. As a second example, this fast mixing bound for KL immediately implies a fast mixing bound for other notions of distance (e.g., Total Variation) via standard comparison inequalities (e.g., Pinsker's inequality) and the fact that the mixing contraction rate is slowest for Total Variation among all $f$-divergences; details in \S\ref{app:otherdists}. As a third example, this argument recovers tight, exponentially better mixing rates if the potentials are strongly convex: in this case, the contraction map is $c$-contractive for some $c < 1$, which improves the contraction-reduction lemma by a factor of $c$ and the resulting $T$-iteration argument by a factor of $c^T$; details in \S\ref{ssec:prelim:pabi}.

\subsubsection{Lower bound on mixing time} Our (algorithm-dependent) lower bounds for sampling are loosely motivated by classical lower bounds from convex optimization in the sense that although we use somewhat different constructions and analyses, the key reasons behind the lower bounds are mirrored: the log-concave lower bound is due to \emph{reachability}, and the strongly log-concave lower bound is due to \emph{strong contractivity}. Below we briefly describe the core ideas behind both. As an aside, an appealing property of both lower bounds is that they apply even in the simple setting of univariate quadratic potentials with full-batch gradients.

\paragraph*{Log-concave lower bound via reachability.}
Briefly, the simple but key observation is that if the Langevin Algorithm is initialized near the bottom of the interval $\cK = [-D/2,D/2] \subset \R$ and the potential $f \equiv 0$ is identically zero, then the iterates form a constrained random walk with $\cN(0,2\eta)$ increments---and thus, with high probability, the Langevin Algorithm requires the minimax-optimal number $\Theta(D^2/\eta)$ of iterations before it can even reach the top of the interval $\cK$, let alone mix. Note that this construction is different from optimization lower bounds (since minimizing the identically zero function is trivial). However, this construction intuitively leads to similar rates as in convex optimization due to the same core challenge of reachability---reachability to a minimizer in optimization, and reachability to the full set in sampling; details in \S\ref{ssec:convex-lb}.

\paragraph*{Strongly log-concave lower bound via strong contractivity.}
Here, we can re-use the classical lower bound construction for Gradient Descent on strongly convex functions.  Specifically, consider running the Langevin Algorithm on the univariate quadratic $x \mapsto \frac{\lambda}{2}x^2$ whose second derivative $\lambda \in [m,M]$ is chosen so as to worsen the corresponding contraction coefficient $c = \abs{1 - \lambda \eta}$ for a Gradient Descent update with stepsize $\eta$. In convex optimization, that is already sufficient to prove that Gradient Descent converges at a rate no faster than $c^{\Theta(T)}$; however, for sampling one must also understand how the Gaussian noise affects the probabilistic goal of being close to $\pieta$. Briefly, the simple but key observation is that in the unconstrained setting (which affects the mixing time only up to a logarithmic factor), the Langevin Algorithm's iterates have explicitly computable Gaussian laws since each iteration contracts the previous iterate by a factor of $c$ and then injects fresh Gaussian noise. More precisely, the $T$-th iterate $X_T$ has variance which decays in $T$ at a rate of $1-c^{2T}$, and thus $\KLarg{X_T}{\pieta} = \KLarg{X_T}{X_{\infty}} = \KLarg{\cN(0,1-c^{2T})}{\cN(0,1)}$ which by a direct calculation is of the desired order $ c^{\Theta(T)}$;
details in \S\ref{ssec:sc-lb}.

\subsection{Related Work}\label{ssec:intro-prev}

This paper is at the intersection of several lines of related work. We further contextualize this here.

\paragraph*{Other sampling algorithms.} Due to the many applications of sampling, there is an extensive literature on different sampling algorithms spanning multiple communities and decades. Much of this work was inspired by the seminal paper~\citep{dyer1991random}, which gave the first randomized polynomial-time algorithm for computing the volume of a convex body via a reduction to (log-concave) sampling. A beautiful line of work from the theoretical computer science community has improved the (originally large) polynomial runtime factors by developing elegant algorithms based on geometric random walks such as the Metropolized Random Walk~\citep{mengersen1996rates,roberts1996geometric}, the Ball Walk~\citep{lovasz1990mixing,lovasz1993random}, and the Hit-and-Run Walk~\citep{lovasz2007geometry,lovasz2006hit,lovasz1999hit, kannan1995isoperimetric,belisle1993hit}, to name a few; for details see the survey~\citep{vempala2005geometric}. A key distinguishing aspect of this line of work is that these geometric random walk algorithms only access the target distribution $\pi$ by evaluating the density up to a constant factor---in contrast to the Langevin Algorithm which also evaluates gradient information about the density. This distinction is formalized through the notion of zero-th order and first-order algorithms: the former access $\pi$ only through evaluations of the potential $f$, whereas the latter also evaluate gradients of $f$. Most first-order algorithms are variants, at least intuitively, of the Langevin Algorithm. These variations come in many forms and can be essential for faster runtimes in theory, practice, or both. A few examples of these variations include: dealing with constrained sets by using projected~\citep{bubeck2018sampling}, proximal~\citep{salim2020primal,brosse2017sampling,durmus2018efficient}, or mirror~\citep{ahn2021efficient, zhang2020wasserstein,hsieh2018mirrored} variants of the Langevin Algorithm; augmenting the state space by using Hamiltonian Monte Carlo~\citep{duane1987hybrid,neal2011mcmc} or its variants such as Riemannian Hamiltonian Monte Carlo~\citep{girolami2011riemann,lee2018convergence,kook2022sampling}, NUTS~\citep{hoffman2014no}, or the Underdamped Langevin Algorithm~\citep{cheng2018underdamped,eberle2019couplings,cao2020complexity}; or using different discretization schemes such as the Randomized Midpoint Discretization~\citep{shen2019randomized} or the Leap-Frog Integrator~\citep{bishop2006pattern}. These algorithms can be combined with Metropolis-Hastings filters to correct the bias of the algorithm's stationary distribution; this typically improves mixing times to $\pi$ from polynomial to polylogarithmic in the inverse accuracy. A notable example is the Metropolis-Adjusted Langevin Algorithm~\citep{roberts1996exponential}, whose mixing time has been the focus of much recent work~\citep{chewi2021optimal,dwivedi2018log} and was recently characterized in~\citep{wu2021minimax}. 
While there have been many more exciting developments, this literature is far too large to attempt to comprehensively cite here, and we refer the reader to textbooks and surveys such as~\citep{andrieu2003introduction,robert1999monte,liu2001monte,roberts2004general,chewibook} for further background.
\par The present paper fits into this line of work by analyzing the (Projected Stochastic) Langevin Algorithm. We focus on this algorithm because it is arguably the simplest first-order sampling algorithm and also is the direct sampling analog of (Projected Stochastic) Gradient Descent.

\paragraph*{Langevin Algorithm analyses under different assumptions.} While the Langevin Diffusion converges asymptotically under mild tail assumptions~\citep{roberts1996exponential}, non-asymptotic convergence of the Langevin Algorithm require stronger assumptions. This is in part because sampling is in general computationally hard if the potential is non-convex, e.g., due to reductions from non-convex optimization. Non-asymptotic mixing bounds for the Langevin Algorithm were first derived by the seminal paper~\citep{Dalalyan16} in the unconstrained setting where $f$ is strongly convex and smooth. This led to a rapidly growing line of work on mixing times (primarily to $\pi$) in many settings---for example log-concave/strongly log-concave settings and  constrained/unconstrained settings as already mentioned above, but also settings where the potential 
has less stuctural assumptions, for example when it is non-smooth~\citep{durmus2019analysis,liang2021proximal,lehec2021langevin,nguyen2021unadjusted,chatterji2020langevin}, or is non-convex and only satisfies tail-growth or isoperimetric conditions~\citep{mou2022improved,raginsky2017non,erdogdu2022convergence,erdogdu2021convergence,nguyen2021unadjusted,erdogdu2018global,cheng2018sharp,balasubramanian2022towards,VempalaW19}. Another line of work has focused on sampling from non-convex manifolds~\citep{li2020riemannian,cheng2022theory,gatmiry2022convergence}.

\par We highlight a recent line of work which relaxes (strong) convexity of the potential to functional inequalities of the target distribution $\pi$. This idea was pioneered by the paper~\citep{VempalaW19}, which pointed out that smoothness and functional inequalities are sufficient to establish rapid mixing---essentially replacing convexity and strong convexity by Poincar\'e and Log-Sobolev inequality assumptions, respectively. A rapidly growing line of recent work has established even faster mixing bounds under these assumptions (e.g.,~\citep{chewi2021analysis,balasubramanian2022towards,erdogdu2022convergence,erdogdu2021convergence}) as well as found appealing ways to interpolate between these functional inequalities~\citep{chewi2021analysis}. It should be noted that for the purpose of mixing times, convexity and functional inequality assumptions are incomparable in the sense that on one hand, functional inequalities allow for (mild) non-convexity; and on the other hand, while convexity implies a Poincar\'e inequality with a finite constant, there is no uniform bound on this Poincar\'e constant. 

\par In this paper we focus on the foundational setting of smooth convex potentials $f$ (possibly also with strong convexity and/or bounded domains and/or stochastic gradients) for several reasons: (i) this is the direct sampling analog of the most classical setup in convex optimization; (ii) this setup has many applications in machine learning, statistics, and scientific computing; and (iii) optimal mixing times for $\pieta$ were previously unknown even in this seemingly simple setting.

\paragraph*{State-of-the-art mixing bounds for the Langevin Algorithm.}
As mentioned earlier, most previous work focuses on mixing to $\pi$. Existing bounds vary depending on the precise set of assumptions, the precise variant of the algorithm, and the notion of mixing error. For several common combinations thereof, state-of-the-art mixing bounds have changed in essentially each of the past few years---and may continue to change in the years to come, 
as there is no\footnote{\label{fn:fisher}
	At the time of writing, there was no setting in which current mixing bounds for the (Unadjusted) Langevin Algorithm were known to be optimal. The concurrent work~\citep{chewi2022fisher} also shows an optimality result. Their result concerns an entirely different setting (non-convex potentials where error is measured in Fisher information) and applies to a variant of the Langevin Algorithm (that outputs the averaged iterate) in the specific regime of very large mixing error (namely $\eps \approx \sqrt{d}$, where $d$ is the dimension). We also note that our lower bound (Theorem~\ref{thm:sc-lb}) establishes that an upper bound in~\citep{VempalaW19} for mixing to $\pieta$ is tight in certain metrics in the unconstrained, non-stochastic, strongly log-concave, and smooth setting. See the main text for details.
}  setting in which current mixing bounds for the (Unadjusted) Langevin Algorithm are known to be optimal.

\par For example, for the unconstrained, smooth, strongly log-concave setting originally investigated by~\citep{Dalalyan16,durmus2016sampling,durmus2017nonasymptotic,Dalalyan2017further}, a line of recent work~\citep{VempalaW19,GaneshT20,erdogdu2022convergence,chewi2021analysis,durmus2019analysis} has led to state-of-the-art mixing results to $\pi$ in R\'enyi divergence $\cD_{\alpha}$ of order roughly $\Otilde(\alpha d \kappa^2/\eps)$ for $\alpha$ of moderate size~\citep{chewi2021analysis}, and even faster $\Otilde(d \kappa / \eps)$ for the KL divergence using averaged iterates~\citep{durmus2019analysis}. Here $d$ denotes the dimension, $\eps$ denotes the mixing error to $\pi$, $m$ denotes the strong convexity of the potential $f$, $M$ denotes its smoothness, and $\kappa = M/m$ denotes the condition number. In this setting, these R\'enyi and KL guarantees recover state-of-the-art Wasserstein guarantees~\citep{Dalalyan16,Dalalyan2017further,DurmusM19} by Talagrand's inequality.

\par For this unconstrained, smooth, strongly-log concave setting, the recent revision\footnote{We thank Andre Wibisono for making us aware of this revision.} of~\citep[Lemmas 4,8]{VempalaW19} provided an upper bound of $\Otilde(\frac{\alpha}{m\eta})$ on the mixing time of the Langevin Algorithm to $\pieta$. Our lower bound (Theorem~\ref{thm:sc-lb}) establishes that their upper bound is tight for R\'enyi divergences of constant size $\alpha = \Theta(1)$. Our upper bound $\Otilde(\frac{1}{\eta m})$ for this setting (Theorem~\ref{thm:sc-ub}) has exponentially better (and optimal) dependence on the R\'enyi parameter $\alpha$, does not require as onerous a warm start, and extends to other settings (e.g., constrained, stochastic, all non-trivial stepsizes, and log-concave target distributions). 

\par For the unconstrained, smooth, (non-strongly) log-concave setting, state-of-the-art mixing results to $\pi$ in $\TV$ are of order roughly $\Otilde(d^2 M^2 c_{\textrm{PI}}^2 /\eps^4)$~\citep{balasubramanian2022towards} and $\Otilde(d^3 M^2 c_{\textrm{PI}}^2 / \eps^2) $~\citep{chewi2021analysis}. Here, $c_{\textrm{PI}}$ is the Poincar\'e constant, which is guaranteed finite but cannot be uniformly bounded. The lack of strong log-concavity poses technical challenges, leading to some results requiring averaged iterates and/or having unstable guarantees as $T$ grows.
One approach for bypassing these challenges is to consider ``modified'' algorithms which are run on a quadratically regularized (and therefore strongly convex) potential~\citep{Dalalyan16}; this has led to state-of-the-art guarantees for Wasserstein mixing~\citep{dalalyan2019bounding} and MALA~\citep{dwivedi2018log}. For further details and history, see, e.g.,~\citep[Table 1]{dwivedi2018log} and~\citep[Table 2]{chewi2021analysis}.

\par For constrained settings, much less is known. Indeed, the only mixing time result we are aware of for the Projected Langevin Algorithm is~\citep{bubeck2018sampling}, which in a tour-de-force analysis shows a polynomial mixing time to $\pi$, albeit with large polynomial factors like $d^{12}$. Faster upper bounds for mixing to $\pi$ were recently shown for different algorithms, notably the Proximal Langevin Algorithm~\citep{durmus2018efficient,brosse2017sampling} and the Mirror Langevin Algorithm~\citep{ahn2021efficient}.
The paper~\citep{bubeck2018sampling} states two open questions for the Projected Langevin Algorithm: extend their mixing results to mini-batch gradients, and obtain optimal dependence on the dimension and accuracy. We accomplish both goals for $\pieta$, and hope that this will help pave the way towards the answers for $\pi$. 

\par The present paper fits into this line of work by establishing tight mixing times to $\pieta$ for the (Projected Stochastic) Langevin Algorithm in a variety of settings: constrained/unconstrained, log-concave/strongly log-concave, and exact/stochastic gradients. An appealing aspect of our analysis technique is that it extends to all these settings with essentially no change. An appealing aspect of our results on mixing to $\pieta$ is that they clarify that the dimension and error dependencies for mixing to $\pi$ arise only through a single factor of $1/\eta$, see the discussion following Theorem~\ref{thm:intro-convex}.

\paragraph*{Lower bounds.} Whereas the study of lower bounds is well developed in the field of optimization, much less is known for sampling. We refer the reader to the recent paper~\citep{chewi2022query} (which settles this question in dimension $1$ for unconstrained, smooth, strongly log-concave sampling) for a discussion of this gap (in dimensions greater than $1$) and the technical challenges for overcoming it. Because of these challenges, existing sampling lower bounds are primarily algorithm-specific, e.g., for the Underdamped Langevin Algorithm~\citep{cao2020complexity}, or the Unadjusted Langevin Algorithm~\citep{chewi2022fisher}, or the Metropolis Adjusted Langevin Algorithm~\citep{wu2021minimax,lee2021lower,chewi2021optimal}.
\par The present paper fits into this line of work on algorithm-specific lower bounds by establishing tight lower bounds for the Langevin Algorithm to mix to $\pieta$ in the log-concave setting. These are the first tight lower bounds for the mixing time of the (Unadjusted) Langevin Algorithm (to either $\pieta$ or $\pi$). See Footnote~\ref{fn:fisher}.

\paragraph*{Connections between optimization and sampling.} 
Much of the literature on sampling is inspired by similarities to optimization.
These parallels are particularly strong between the Langevin Algorithm and Gradient Descent---the simplest and most canonical first-order algorithms for sampling and optimization, respectively. Broadly speaking, there are two approaches for extending this connection from algorithm definition to analysis. One approach views the Langevin Algorithm as Gradient Descent plus noise, and borrows classical techniques from optimization over $\R^d$ to prove sampling bounds---indeed this was the motivation of the original non-asymptotic bounds in~\citep{Dalalyan16}, see the discussion in~\citep[\S3]{Dalalyan2017further}. The other approach links sampling on $\R^d$ not with optimization on $\R^d$, but rather with optimization over the lifted space of probability measures $\cP(\R^d)$ endowed with the geometry of the $2$-Wasserstein metric. This connection is based on the celebrated JKO Theorem~\citep{jordan1998variational}, which states that the Langevin Diffusion is the gradient flow on $\cP(\R^d)$ with respect to the objective functional $\KLarg{\cdot}{\pi}$. This connection is strengthened by the fact that properties of $\pi$ such as log-concavity, strong log-concavity, and log-Sobolev inequalities imply properties of the functional $\KLarg{\cdot}{\pi}$ such as convexity, strong convexity, and gradient domination inequalities, respectively. A fruitful line of recent work has exploited this connection by analyzing the Langevin Diffusion (and its variants) via optimization techniques on Wasserstein space, see e.g.,~\citep{wibisono2018sampling,bernton2018langevin,durmus2019analysis,balasubramanian2022towards,ma2021there}.

\par The present paper fits into this interface of optimization and sampling by providing a new way to extract quantitative bounds from the first approach (linking sampling and optimization over $\R^d$). 
Specifically, on the analysis side, we show how the Privacy Amplification by Iteration technique implies tight mixing bounds for the Langevin Algorithm by exploiting the foundational fact from optimization that a Gradient Descent step is contractive (but in a different way from~\citep{Dalalyan16,Dalalyan2017further}). This leads to error bounds for sampling that in many ways mirror classical results in optimization, see Table~\ref{tab:sampling-vs-opt}.

\paragraph*{Connections to differential privacy.} The Langevin Algorithm is identical to Noisy-SGD---the standard algorithm for large-scale machine learning with differential privacy (DP)---modulo a different noise scaling and possibly gradient clipping. What it means for Noisy-SGD to satisfy DP is similar, at least intuitively, to what it means for the Langevin Algorithm to mix. Essentially, DP means that the algorithm yields indistinguishable outputs when run from identical initializations on different objectives, whereas mixing means that the algorithm yields indistinguishable outputs when run from different initializations on identical objectives. These syntactic similarities suggest that analyses might carry over if appropriately modified, and indeed this cross-pollination has proven fruitful in both directions. In one direction, ideas from sampling have recently been used to achieve tight DP guarantees for Noisy-SGD in certain regimes~\citep{ChourasiaYS21,RyffelBP22,YeS22}. In the other direction, the technique of adaptive composition from DP has recently been used to prove the first mixing bounds for the Langevin Algorithm in the R\'enyi divergence~\citep{GaneshT20,erdogdu2022convergence}. 
\par The present paper fits into this interface of sampling and DP in the sense that we introduce to the samping literature a new analysis technique from the DP literature (namely, Privacy Amplification by Iteration), and we show that this gives optimal mixing bounds for the Langevin Algorithm.

\paragraph*{Privacy Amplification by Iteration.} As mentioned earlier, the technique of Privacy Amplification by Iteration was first introduced in~\citep{pabi}. It has been used to derive fast algorithms with optimal privacy-utility tradeoffs for stochastic convex optimization~\citep{FeldmanKoTa20}. \citep{BalleBGG19} considered generalizations of this approach, and in particular showed a strengthening for strongly convex losses. \citep{Asoodeh20} prove a version of Privacy Amplification by Iteration that holds for bounded sets by using contraction coefficients for hockey-stick divergence, and~\citep{Sordello21} extend these results to shuffled SGD. Recent work by the authors~\citep{AltTal22dp} provides a diameter-aware version of Privacy Amplification by Iteration that also incorporates Privacy Amplification by Sampling for the Sampled Gaussian Noise Mechanism. This is the version of Privacy Amplification by Iteration exploited in this paper. 
\par The present paper fits into this line of work by showing how this technique, previously only used in the privacy literature, also enables understanding questions in the sampling literature.

\subsection{Organization}\label{ssec:intro-outline}

\S\ref{sec:prelim} recalls relevant preliminaries.
\S\ref{sec:convex} and \S\ref{sec:sc} contain our main results, which characterize the mixing time of the Langevin Algorithm for log-concave and strongly log-concave targets, respectively.
\S\ref{sec:discussion} describes several directions for future research that are motivated by the results in this paper. For brevity, several proofs and auxiliary results are deferred to the Appendix.

\section{Preliminaries}\label{sec:prelim}

In this section, we recall relevant preliminaries about regularity properties in \S\ref{ssec:prelim:convexity}, 
the R\'enyi divergence in \S\ref{ssec:prelim:renyi}, and the shifted R\'enyi divergence and the analysis technique of Privacy Amplification by Iteration in \S\ref{ssec:prelim:pabi}. In an effort towards self-containedness as well as accessibility of this article within the sampling community, the Appendix contains proofs of the convex optimization lemma in \S\ref{ssec:prelim:convexity} and of the differential privacy lemmas in \S\ref{ssec:prelim:pabi}.

\paragraph*{Notation.} Throughout $\|\cdot\|$ denotes the Euclidean norm on $\R^d$, and $\cP(\R^d)$ denotes the set of Borel-measurable probability distributions over $\R^d$. A function $f : \R^d \to \R$ is said to be $c$-contractive (equivalently, $c$-Lipschitz) if $\|f(x) - f(y)\| \leq c \|x - y\|$ for all $x,y \in \R^d$. If $c =1$, then we simply say $f$ is contractive; and if $c < 1$, then we say $f$ is $c$-strongly contractive. Throughout, the potentials are assumed differentiable so that the Langevin Algorithm is well-defined, and we assume $\int e^{-f(x)} dx < \infty$ so that the target distribution $\pi$ is well-defined. Also, throughout $\cK$ is a non-empty, closed, convex subset of $\R^d$ and for brevity we drop the quantifiers ``non-empty'' and ``closed''. These assumptions ensure that the projection operator $\proj(x) := \argmin_{y \in \cK} \|y - x\|$ onto $\cK$ is well-defined (see, e.g.,~\citep[Theorem 3.1.10]{nesterov2003introductory}). All other notation is introduced in the main text.

\subsection{Convexity, Smoothness, and Contractivity}\label{ssec:prelim:convexity}

The convergence of the Langevin Algorithm depends, of course, on the regularity properties of the potentials. We recall two of the most basic assumptions below: strong convexity and smoothness, which amount to under and overestimates, respectively, of the second-order Taylor expansion. Or, if the function is twice differentiable, then $m$-strong convexity and $M$-smoothness are equivalent to the simple eigenvalue condition $mI_d \preceq \nabla^2 f(x) \preceq MI_d$ for all $x$. Note also that $0$-strong convexity corresponds to convexity.

\begin{defin}[Regularity assumptions on potentials]\label{def:regularity}
	A differentiable function $f : \R^d \to \R$ is:
	\begin{itemize}
		\item $m$-strongly convex if $f(x) \geq f(y) + \langle \nabla f(y), x-y\rangle + \frac{m}{2}\|x-y\|^2$ for all $x,y \in \R^d$.
		\item $M$-smooth if $f(x) \leq f(y) + \langle \nabla f(y), x - y\rangle + \frac{M}{2}\|x-y\|^2$ for all $x,y \in \R^d$.
	\end{itemize}
	For settings in which potentials are both $m$-strongly convex and $M$-smooth, the corresponding condition number is denoted by $\kappa := M/m \geq 1$. 
\end{defin}

Since these concepts are by now standard in the sampling literature, for brevity we refer to the monograph~\citep{nesterov2003introductory} for a detailed discussion of them and their many equivalent formulations (e.g., a differentiable function $f$ is $M$-smooth if and only if $\nabla f$ is $M$-Lipschitz). However, we do recall here one fundamental consequence of these properties that we use repeatedly in the sequel:
smoothness implies contractivity of Gradient Descent updates. This improves to strong contractivity under the additional assumption of strong convexity. This contractivity property is arguably one of the most important consequence of strong convexity and smoothness in convex optimization. Since this fact is key to our results, for the convenience of the reader we provide a short proof in Appendix~\ref{app:contractivity}.

\begin{lemma}[Contractivity of Gradient Descent updates]\label{lem:contractive-gd}
	Suppose $f$ is an $m$-strongly convex, $M$-smooth function for $0 \leq m \leq M < \infty$. For any stepsize $\eta \leq 2/M$, the Gradient Descent update 
	\[x \mapsto x - \eta \nabla f(x)\]
	is $c$-contractive for 
	\[
		c = \max_{\lambda \in \{m,M\}} \abs{1 - \eta \lambda}.
	\]
	In particular, if $ 0 < m \leq M < \infty$, then $c = (\kappa-1)/(\kappa+1)$ for the stepsize choice $\eta = 2/(M+m)$.
\end{lemma}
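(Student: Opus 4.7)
The plan is to reduce the contractivity claim to an operator-norm bound on an averaged Hessian, which is the standard approach from convex optimization. I would first treat the case where $f$ is twice continuously differentiable (where the argument is completely transparent) and then extend to the general $C^1$ case by mollification.

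First, assume $f \in C^2$. Applying the fundamental theorem of calculus to the path $t \mapsto \nabla f(y + t(x-y))$ on $[0,1]$ yields
\begin{equation*}
\nabla f(x) - \nabla f(y) = \bar{H}(x,y)\,(x-y), \qquad \bar{H}(x,y) := \int_0^1 \nabla^2 f\bigl(y + t(x-y)\bigr)\,dt,
\end{equation*}
which is a symmetric positive semidefinite matrix. For twice-differentiable $f$, $m$-strong convexity and $M$-smoothness are equivalent to $mI_d \preceq \nabla^2 f(\cdot) \preceq MI_d$, and this pointwise sandwich is preserved by averaging: $mI_d \preceq \bar{H}(x,y) \preceq MI_d$. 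Hence the Gradient Descent step can be written as the linear map
\begin{equation*}
\bigl(x - \eta\nabla f(x)\bigr) - \bigl(y - \eta\nabla f(y)\bigr) = \bigl(I_d - \eta\bar{H}(x,y)\bigr)(x-y).
\end{equation*}

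Second, I would bound the operator norm $\|I_d - \eta \bar{H}(x,y)\|_{\mathrm{op}}$. Since $\bar{H}(x,y)$ is symmetric with spectrum contained in $[m,M]$, the spectrum of $I_d - \eta\bar{H}(x,y)$ is contained in $[1-\eta M,\,1-\eta m]$, and its operator norm equals the maximum of $|1 - \eta\lambda|$ over $\lambda$ in this interval. As $\lambda \mapsto |1-\eta\lambda|$ is convex on $[m,M]$, its maximum over this interval is attained at an endpoint, giving
\begin{equation*}
\|I_d - \eta\bar{H}(x,y)\|_{\mathrm{op}} \leq \max_{\lambda \in \{m,M\}} |1-\eta\lambda| = c.
\end{equation*}
Cauchy--Schwarz then yields the claimed $c$-contractivity. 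The condition $\eta \leq 2/M$ is used here only to ensure that $c \leq 1$, so the map is genuinely non-expansive. Specializing to $\eta = 2/(M+m)$ when $m > 0$ gives $|1-\eta m| = |1-\eta M| = (M-m)/(M+m) = (\kappa-1)/(\kappa+1)$, proving the ``in particular'' clause.

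To extend to a merely differentiable $f$, I would use a standard mollification argument: approximate $f$ by $f_\eps := f * \phi_\eps$ where $\phi_\eps$ is a smooth nonnegative radial bump with unit integral. Convolution preserves convexity, and a direct calculation shows $f_\eps$ is again $m$-strongly convex and $M$-smooth and is now $C^\infty$; the $C^2$ case gives $c$-contractivity of $x \mapsto x - \eta \nabla f_\eps(x)$, and pointwise convergence $\nabla f_\eps \to \nabla f$ (which holds for $C^1$ $f$) lets us pass to the limit. The main technical nuisance is this extension step, but it is routine; the conceptual content of the lemma is entirely captured by the averaged-Hessian computation above, which mirrors the classical textbook proof for Gradient Descent contractivity in convex optimization.
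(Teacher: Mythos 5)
Your argument is essentially identical to the paper's: both apply the fundamental theorem of calculus to express $\nabla f(x)-\nabla f(y)$ via an averaged Hessian $\bar H(x,y)$ with spectrum in $[m,M]$, bound $\|I_d-\eta\bar H\|_{\mathrm{op}}$ by maximizing the convex function $\lambda\mapsto|1-\eta\lambda|$ over $[m,M]$, and conclude $c$-contractivity. The only difference is cosmetic: the paper treats the $C^2$ case and defers the merely-differentiable case to standard references, whereas you spell out a (correct) radial-mollification limiting argument.
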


In the sequel, we also make use of the basic fact from convex analysis that the projection operator onto a convex set is contractive. A proof can be found in, e.g.,~\citep[Theorem 1.2.1]{schneider2014convex}. 

\begin{lemma}[Contractivity of projections onto convex sets]\label{lem:contractive-proj} If $\cK \subset \R^d$ is convex, then $\proj$ is contractive.
\end{lemma}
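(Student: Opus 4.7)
The plan is to establish Lemma 2.2 via the first-order (variational) characterization of the metric projection onto a closed convex set. Recall that $\proj(x)$ is defined as the unique minimizer of $y \mapsto \|y - x\|^2$ over $\cK$; uniqueness and existence follow from convexity and closedness of $\cK$ together with strict convexity and coercivity of the Euclidean norm squared. The key fact I would use is the well-known obtuse-angle characterization: for $x \in \R^d$ and $u \in \cK$, we have $u = \proj(x)$ if and only if
\begin{equation*}
\langle x - u, \, w - u \rangle \leq 0 \qquad \text{for all } w \in \cK.
\end{equation*}
This follows from the first-order optimality condition applied to the convex minimization defining $\proj$. (If the reader does not wish to assume this, it can be derived in two lines by expanding $\|t w + (1-t) u - x\|^2 - \|u - x\|^2 \geq 0$ for all $t \in [0,1]$ and $w \in \cK$, then letting $t \downarrow 0$.)

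Given this characterization, the proof is short. Fix arbitrary $x, y \in \R^d$ and set $u = \proj(x)$, $v = \proj(y)$. Since $v \in \cK$, the variational inequality for $u$ yields $\langle x - u, v - u\rangle \leq 0$. Symmetrically, since $u \in \cK$, the variational inequality for $v$ yields $\langle y - v, u - v\rangle \leq 0$. Adding these two inequalities and rearranging gives
\begin{equation*}
\|u - v\|^2 \;\leq\; \langle x - y, \, u - v\rangle.
\end{equation*}
Applying Cauchy--Schwarz to the right-hand side yields $\|u - v\|^2 \leq \|x - y\| \cdot \|u - v\|$, and dividing by $\|u - v\|$ (in the nontrivial case $u \neq v$; otherwise the bound is immediate) gives $\|\proj(x) - \proj(y)\| \leq \|x - y\|$, which is the claimed contractivity.

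I do not anticipate any real obstacle here: the whole argument is two applications of the variational inequality followed by Cauchy--Schwarz. The only point that requires a moment of care is justifying the variational characterization itself, but this is standard and can be cited or reproved in a couple of lines as indicated above. Relative to the approach of invoking~\citep[Theorem 1.2.1]{schneider2014convex} as a black box, the self-contained version adds essentially no length and keeps the paper accessible, which fits the stated goal of the preliminaries section.
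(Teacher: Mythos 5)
Your proof is correct: the variational (obtuse-angle) characterization of the projection, applied symmetrically at $u=\proj(x)$ and $v=\proj(y)$ and combined with Cauchy--Schwarz, is exactly the standard argument for nonexpansiveness of the metric projection onto a closed convex set. The paper itself does not prove this lemma but simply cites~\citep[Theorem 1.2.1]{schneider2014convex}; your self-contained version reproduces the same proof that reference gives, so there is no substantive difference in approach.
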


\subsection{R\'enyi Divergence}\label{ssec:prelim:renyi}

Here we define the R\'enyi divergence as it plays a central role in our analysis. See the survey~\citep{van2014renyi} for a detailed discussion of R\'enyi divergences and their many properties. In what follows, we adopt the standard convention that $0/0 = 0$ and $x/0 = \infty$ for $x \neq 0$.
	
\begin{defin}[R\'enyi divergence]\label{def:rd}
	The R\'enyi divergence between probability distributions $\mu,\nu \in \cP(\R^d)$ of order $\alpha \in (1,\infty)$ is
	\[
		\Dalp{\mu}{\nu} = \frac{1}{\alpha - 1} \log \int \left( \frac{\mu(x)}{\nu(x)} \right)^{\alpha} \nu(x) dx
	\]
	if $\mu \ll \nu$, and is $\infty$ otherwise. The R\'enyi divergences of order $\alpha \in \{1,\infty\}$ are defined by continuity. 
\end{defin}

Our analysis makes use of the following facts about the R\'enyi divergence. First, we recall standard comparison inequalities that upper bound various probability metrics by the R\'enyi divergence. Since we prove fast mixing with respect to R\'enyi divergence, these inequalities immediately imply fast mixing with respect to all these other metrics. Note that for the KL and Chi-Squared divergences, no inequality is needed, since these are equal to the R\'enyi divergence with parameters $\alpha = 1$ and $2$, respectively (modulo a simple transform for the latter). The Total Variation bound is Pinkser's inequality, and the Hellinger bound can be found in, e.g.,~\citep[equation (7)]{van2014renyi}.

\begin{lemma}[Standard comparison inequalities to R\'enyi divergence]\label{lem:renyi-ineq}
	Suppose $\mu \ll \nu$. Then:
	\begin{itemize}
		\item \underline{Kullback-Leibler divergence:} $\KLarg{\mu}{\nu} = \cD_{1}(\mu \; \| \; \nu)$.
		\item \underline{Chi-squared divergence:} $\chi^2(\mu, \nu) = \exp\left(\cD_{2}(\mu \; \| \; \nu)\right) - 1$.
		\item \underline{Total variation distance:} $\TV(\mu,\nu) \leq \sqrt{\half \, \cD_{1}(\mu \; \| \; \nu)}$.
		\item \underline{Hellinger divergence:} $\Hell(\mu,\nu) \leq \sqrt{\cD_{1}(\mu \; \| \; \nu)}$.
	\end{itemize}
\end{lemma}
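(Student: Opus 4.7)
The plan is to verify the four claims in turn; all are textbook facts whose proofs reduce to routine manipulations of the integral representations. For the identity $\KLarg{\mu}{\nu} = \cD_1(\mu \;\|\; \nu)$, I would apply L'H\^opital's rule to $\Dalp{\mu}{\nu} = \tfrac{1}{\alpha-1}\log \int (\mu/\nu)^{\alpha} \nu \, dx$ as $\alpha \to 1$: both the numerator and the denominator of this $0/0$ form are differentiable in $\alpha$, and differentiating gives $\int \mu \log(\mu/\nu) \, dx = \KLarg{\mu}{\nu}$, which matches the definition of $\cD_1$ as the continuous extension. For the chi-squared identity, expand $\exp(\cD_2(\mu \;\|\; \nu)) = \int (\mu/\nu)^{2} \nu \, dx = \int \mu^{2}/\nu \, dx$, and note that $\chi^2(\mu,\nu) = \int (\mu-\nu)^{2}/\nu \, dx = \int \mu^{2}/\nu \, dx - 1$, so the identity holds on the nose.

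For Pinsker's inequality, I would invoke the data processing inequality to reduce to the Bernoulli case. Picking a set $A$ that achieves $\TV(\mu,\nu) = |\mu(A) - \nu(A)|$, the pushforward under $\1_{A}$ preserves $\TV$ and can only decrease $\KL$, so it suffices to show that $(p-q)^{2} \leq \tfrac{1}{2}\bigl(p \log(p/q) + (1-p) \log((1-p)/(1-q))\bigr)$ for all $p,q \in [0,1]$. This is a one-variable calculus exercise: fix $q$, observe that both sides vanish with matching first derivatives in $p$ at $p=q$, and note that the difference has second derivative $\tfrac{1}{2p(1-p)} - 2 \geq 0$ since $p(1-p) \leq \tfrac{1}{4}$ on $[0,1]$, so the difference is nonnegative.

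For the Hellinger bound, I would use Jensen's inequality applied to the concave function $\log$. Writing $B(\mu,\nu) := \int \sqrt{\mu \nu} \, dx$ for the Bhattacharyya coefficient, one has $\Hell^{2}(\mu,\nu) = \int (\sqrt{\mu} - \sqrt{\nu})^{2} \, dx = 2(1 - B(\mu,\nu))$ in the convention consistent with the stated bound. Then $\KLarg{\mu}{\nu} = -2 \int \mu \log \sqrt{\nu/\mu} \, dx \geq -2 \log B(\mu,\nu) \geq 2(1 - B(\mu,\nu)) = \Hell^{2}(\mu,\nu)$, where the first inequality is Jensen's and the second uses $-\log x \geq 1 - x$. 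Taking square roots gives the stated bound. There is no real obstacle in any of these four proofs; the only care required is book-keeping about the normalization convention for $\Hell$ and about the base case for Pinsker.
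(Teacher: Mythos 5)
Your four arguments are all correct. The paper itself does not prove this lemma: it treats the first two items as immediate from the definition (for $\KL$) and from a one-line algebraic identity (for $\chi^2$), and it outsources the last two to citations, naming Pinsker's inequality for $\TV$ and pointing to \citep[equation (7)]{van2014renyi} for the Hellinger bound. You instead supply self-contained proofs of all four. For $\KL = \cD_1$ your L'H\^opital argument (with $\cD_1$ defined by the continuous extension) is the standard derivation and matches the paper's framing. Your $\chi^2$ expansion is exactly the ``simple transform'' the paper alludes to. For Pinsker, you reduce to the Bernoulli case via data processing and then verify the second-derivative bound $\tfrac{1}{2p(1-p)} - 2 \geq 0$; this is one of the two textbook proofs and is fully correct (convexity of the difference with matching value and derivative at $p=q$ gives nonnegativity). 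For Hellinger, your chain $\KL \geq -2\log B \geq 2(1-B) = \Hell^2$ via Jensen and $-\log x \geq 1-x$ is correct, and you rightly flag that the normalization convention $\Hell^2 = \int(\sqrt{\mu}-\sqrt{\nu})^2$ is the one needed to get $\Hell \leq \sqrt{\KL}$ rather than $\Hell \leq \sqrt{\tfrac{1}{2}\KL}$, which is consistent with the cited source. In short, your proof buys self-containedness and elementary transparency at no cost; the paper's choice to cite buys brevity, since these are well-established facts that don't bear on the paper's novel contributions.
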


We also make use of the data-processing inequality for R\'enyi divergence. This generalizes the standard data-processing inequality for the KL divergence to all $\alpha \geq 1$. A proof can be found in, e.g.~\citep[Theorem 9]{van2014renyi}.

\begin{lemma}[Data-processing inequality for R\'enyi divergence]\label{lem:renyi-dataprocess}
	For any R\'enyi divergence parameter $\alpha \geq 1$, any (possibly random) function $h : \R^d \to \R^{d'}$, and any distributions $\mu,\nu \in \cP(\R^d)$,
	\[
		\Dalplr{h_{\#} \mu}{h_{\#}\nu} \leq \Dalplr{\mu}{\nu}\,.
	\]
\end{lemma}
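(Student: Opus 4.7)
The plan is to reduce to a deterministic map and apply Jensen's inequality conditionally on the fibers of $h$. Since $\exp((\alpha-1)\Dalp{\mu}{\nu}) = \int (d\mu/d\nu)^{\alpha}\, d\nu$, and both $\log$ and $t \mapsto t^{1/(\alpha-1)}$ are monotone, it suffices to show
\[
\int \!\left(\frac{d(h_\#\mu)}{d(h_\#\nu)}\right)^{\!\alpha} d(h_\#\nu) \;\leq\; \int \!\left(\frac{d\mu}{d\nu}\right)^{\!\alpha} d\nu\,.
\]
This is precisely the data-processing inequality for the $f$-divergence with $f(u) = u^\alpha$, which is convex on $[0,\infty)$ for every $\alpha \geq 1$.

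To handle randomized $h$, I would first augment with auxiliary randomness: write $h(x) = \tilde h(x,U)$ for some deterministic measurable $\tilde h$ and an independent seed $U \sim \rho$. Since $\mu \otimes \rho$ and $\nu \otimes \rho$ have density ratio equal to $d\mu/d\nu$ on the first coordinate, forming the product with $\rho$ leaves the R\'enyi divergence unchanged, so it is enough to prove the claim for a deterministic $\tilde h$. For such a map, disintegrate $\nu$ (possible because $\R^d$ is standard Borel) along the level sets of $h$, yielding a regular conditional kernel $\nu(\cdot \mid h = y)$. The tower property gives
\[
\frac{d(h_\#\mu)}{d(h_\#\nu)}(y) \;=\; \E_{X \sim \nu(\cdot \mid h = y)}\!\left[\frac{d\mu}{d\nu}(X)\right],
\]
and Jensen's inequality applied to the convex function $u \mapsto u^\alpha$ then yields
\[
\left(\frac{d(h_\#\mu)}{d(h_\#\nu)}(y)\right)^{\!\alpha} \;\leq\; \E_{X \sim \nu(\cdot \mid h = y)}\!\left[\left(\frac{d\mu}{d\nu}(X)\right)^{\!\alpha}\right].
\]
Integrating against $d(h_\#\nu)(y)$ and invoking the tower property once more produces the desired bound, after which taking $\log$ and dividing by $\alpha-1$ recovers the R\'enyi statement.

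The main obstacle is bookkeeping rather than a substantive difficulty: one must dispose of the trivial case $\mu \not\ll \nu$ (where the right-hand side is $+\infty$ and there is nothing to show) and handle the boundary values $\alpha \in \{1,\infty\}$ separately. For $\alpha = 1$ this is the classical DPI for $\KL$, which follows from the same conditional Jensen argument with $u \log u$ in place of $u^\alpha$, or equivalently by sending $\alpha \downarrow 1$ using continuity of R\'enyi divergence in $\alpha$. For $\alpha = \infty$, the claim reduces to the fact that the essential supremum of a conditional expectation cannot exceed the essential supremum of the integrand, which is immediate.
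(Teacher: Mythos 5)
Your proof is correct. Note that the paper does not actually supply an argument for this lemma; it simply cites \citep[Theorem 9]{van2014renyi}, so there is no in-paper proof to compare against. Your route---rewrite $\exp((\alpha-1)\Dalp{\mu}{\nu}) = \int (d\mu/d\nu)^{\alpha}\,d\nu$, reduce to the $f$-divergence data-processing inequality for $f(u)=u^{\alpha}$, dispose of randomization by tensoring with an independent seed (which leaves the R\'enyi divergence unchanged), disintegrate $\nu$ along $h$, and apply conditional Jensen---is the standard textbook derivation and matches in spirit what the cited reference does. The identity $\frac{d(h_{\#}\mu)}{d(h_{\#}\nu)}(y) = \E_{X\sim\nu(\cdot\mid h=y)}\bigl[\tfrac{d\mu}{d\nu}(X)\bigr]$ is exactly right (and also implicitly shows $h_{\#}\mu \ll h_{\#}\nu$ whenever $\mu\ll\nu$, so the left side is well-defined). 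Your handling of the boundary cases $\alpha\in\{1,\infty\}$ is correct: $\alpha=1$ is the KL data-processing inequality (same Jensen argument with $u\log u$, or continuity in $\alpha$), and $\alpha=\infty$ reduces to the essential supremum of a conditional expectation being dominated by the essential supremum of the integrand. One tiny cosmetic point: the monotonicity justification is cleaner stated as ``for $\alpha>1$ the map $I\mapsto \tfrac{1}{\alpha-1}\log I$ is increasing, so it suffices to compare the integrals''; the parenthetical $t\mapsto t^{1/(\alpha-1)}$ is an unnecessary detour, though not wrong.
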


The next fact is a closed-form expression for the R\'enyi divergence between two Gaussian distributions. This identity can be found, e.g., in~\citep[equation (10)]{van2014renyi}.

\begin{lemma}[R\'enyi divergence between Gaussians]\label{lem:renyi-gaussians}
	If $\sig_{\alpha}^2 = (1 - \alpha) \sig_0^2 + \alpha \sigma_1^2$ is non-zero, then
	\[
		\Dalplr{ \cN\left( \mu_0, \sig_0^2 \right)}{ \cN\left( \mu_1, \sig_1^2 \right)}
		=
		\frac{\alpha(\mu_1 - \mu_0)^2}{2\sig_\alpha^2} + \frac{1}{1-\alpha} \log \frac{\sigma_\alpha}{ \sigma_0^{1-\alpha} \sigma_1^{\alpha} }.
	\]
\end{lemma}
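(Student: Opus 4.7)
The plan is to compute $I := \int \mu(x)^{\alpha}\nu(x)^{1-\alpha}\,dx$ directly, where $\mu$ and $\nu$ are the two Gaussian densities, and then take $\tfrac{1}{\alpha-1}\log I$. First I would substitute the explicit density formulas, pull the Gaussian normalizers out as a prefactor $(2\pi)^{-1/2}\sig_0^{-\alpha}\sig_1^{\alpha-1}$, and rewrite the remaining exponent as a single quadratic in $x$:
\[
-\frac{\alpha}{2\sig_0^2}(x-\mu_0)^2 - \frac{1-\alpha}{2\sig_1^2}(x-\mu_1)^2 \;=\; -\tfrac{1}{2}A x^2 + Bx - C,
\]
with $A = \alpha/\sig_0^2 + (1-\alpha)/\sig_1^2 = \sig_\alpha^2/(\sig_0^2\sig_1^2)$, $B = \alpha\mu_0/\sig_0^2 + (1-\alpha)\mu_1/\sig_1^2$, and $C = \alpha\mu_0^2/(2\sig_0^2) + (1-\alpha)\mu_1^2/(2\sig_1^2)$.

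Under the hypothesis $\sig_\alpha^2 \neq 0$ (and the implicit $\sig_\alpha^2>0$ needed for $\log\sig_\alpha$ to be real, equivalently $A>0$), the integrand is a scaled Gaussian. Completing the square and invoking the standard identity $\int e^{-Ax^2/2 + Bx}\,dx = \sqrt{2\pi/A}\,\exp(B^2/(2A))$, and then collecting the prefactor with $\sqrt{2\pi/A}$ using $A = \sig_\alpha^2/(\sig_0^2\sig_1^2)$, yields
\[
I \;=\; \frac{\sig_0^{1-\alpha}\sig_1^{\alpha}}{\sig_\alpha}\exp\!\Big(\tfrac{B^2}{2A} - C\Big).
\]
Taking $\tfrac{1}{\alpha-1}\log$ on both sides immediately produces the second summand $\tfrac{1}{1-\alpha}\log\tfrac{\sig_\alpha}{\sig_0^{1-\alpha}\sig_1^{\alpha}}$ of the claimed identity.

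What remains is the algebraic identity
\[
\frac{B^2}{2A} - C \;=\; -\frac{\alpha(1-\alpha)(\mu_1-\mu_0)^2}{2\sig_\alpha^2},
\]
after which division by $\alpha-1$ produces the mean term $\alpha(\mu_1-\mu_0)^2/(2\sig_\alpha^2)$. I would verify this by clearing the common denominator $\sig_0^2\sig_1^2\sig_\alpha^2$ and expanding the numerator
\[
\bigl(\alpha\mu_0\sig_1^2 + (1-\alpha)\mu_1\sig_0^2\bigr)^2 - \bigl((1-\alpha)\sig_0^2 + \alpha\sig_1^2\bigr)\bigl(\alpha\mu_0^2\sig_1^2 + (1-\alpha)\mu_1^2\sig_0^2\bigr);
\]
the pure $\mu_0^2$ and $\mu_1^2$ quartic-in-$\sig$ terms cancel, leaving $\alpha(1-\alpha)\sig_0^2\sig_1^2(2\mu_0\mu_1-\mu_0^2-\mu_1^2) = -\alpha(1-\alpha)\sig_0^2\sig_1^2(\mu_1-\mu_0)^2$. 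This elementary bookkeeping is the only step with any friction, but presents no conceptual obstacle; the whole argument is just a one-dimensional Gaussian integral followed by a clean-up of the resulting rational expression.
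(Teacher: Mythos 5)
Your derivation is correct: the prefactor bookkeeping gives $(2\pi)^{-1/2}\sigma_0^{-\alpha}\sigma_1^{\alpha-1}$; $A,B,C$ are right; the Gaussian integral collapses the prefactor to $\sigma_0^{1-\alpha}\sigma_1^{\alpha}/\sigma_\alpha$; and the numerator expansion indeed cancels the $\mu_0^2\sigma_1^4$ and $\mu_1^2\sigma_0^4$ terms, leaving $-\alpha(1-\alpha)\sigma_0^2\sigma_1^2(\mu_1-\mu_0)^2$, which after dividing by $\alpha-1$ yields the mean term. The paper does not prove this lemma; it simply cites it as equation (10) of van Erven and Harrem\"{o}es, so your direct computation of the one-dimensional Gaussian integral is the standard derivation the cited reference itself uses, and it supplies a self-contained proof where the paper relies on a citation. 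One small clarification worth making explicit: the hypothesis as stated only requires $\sigma_\alpha^2\neq 0$, but your Gaussian-integral step needs $A>0$, i.e.\ $\sigma_\alpha^2>0$ (you flag this as implicit, and indeed when $\sigma_\alpha^2<0$ the integral diverges and the divergence is $+\infty$, so the formula with a real $\log\sigma_\alpha$ cannot hold literally); noting this matches the convention in the cited source, where the closed form is understood to apply on the regime $\sigma_\alpha^2>0$.
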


\subsection{Shifted R\'enyi Divergence and Privacy Amplification by Iteration}\label{ssec:prelim:pabi}

Here we describe the core technique in our mixing time analysis: Privacy Amplification by Iteration. This technique was originally introduced in~\citep{pabi}; however, for the purposes of this paper, it is crucial to use the diameter-aware variant of this technique which was recently developed in~\citep{AltTal22dp}. We note that everything in this subsection can be extended to arbitrary noise distributions over Banach spaces, but for simplicity of exposition we restrict to Gaussian noise over $\R^d$ as this suffices for the present purpose of mixing time bounds.

\par We begin by defining the shifted R\'enyi divergence, which is the key quantity that Privacy Amplification by Iteration concerns. In words, the shifted R\'enyi divergence is a version of the R\'enyi divergence that is smoothed out by the Optimal Transport (a.k.a., Wasserstein distance). This smoothing makes the R\'enyi divergence geometrically aware in a natural way for mixing analyses, see the discussion in the techniques section \S\ref{ssec:intro-tech}.

\begin{defin}[Shifted R\'enyi divergence]\label{def:srd}
	The shifted R\'enyi divergence between probability distributions $\mu,\nu \in \cP(\R^d)$ of order $\alpha \in [1,\infty]$ and shift $z \geq 0$ is
	\[
	\Dalpshiftlr{z}{\mu}{\nu} = \inf_{\mu' \; : \; W_{\infty}(\mu,\mu') \leq z} \Dalplr{\mu'}{\nu}.
	\]
\end{defin}

The technique of Privacy Amplification by Iteration uses the shifted R\'enyi divergence as an intermediate Lyapunov function in order to prove that the (unshifted) R\'enyi divergence between the final two iterates of two processes is close, so long as these processes are generated by interleaving contraction maps and additive Gaussian noise. This notion is formalized as follows.

\begin{defin}[Contractive Noisy Iteration]\label{def:cni}
	Consider an initial distribution $\mu_0 \in \cP(\R^d)$, a sequence of (random) $c$-contractions $\phi_t : \R^d \to \R^d$, and a sequence of noise distributions $\xi_t$. The $c$-Contractive Noisy Iteration ($c$-CNI) is the process generated by
	\[
	X_{t+1} = \phi_{t+1}(X_t) + Z_t\,,
	\]
	where $X_0 \sim \mu_0$ and $Z_t \sim \xi_t$ independently. For shorthand, we denote the law of the final iterate $X_T$ by $\CNI_c(\mu_0,\{\phi_t\},\{\xi_t\})$.
\end{defin}

The Privacy Amplification by Iteration technique (or, more precisely, the modification thereof in~\citep{AltTal22dp}) provides the following key estimate. 
We remark that while we state here separate bounds for the cases $c < 1$ and $c=1$, the true PABI bound is slightly sharper and is continuous in $c$, see Remark~\ref{rem:pabi-continuous}---however, we ignore this sharpening because it complicates the expressions and is uneceessary for the purposes of this paper.

\begin{prop}[Diameter-aware Privacy Amplication by Iteration bound from~\citep{AltTal22dp}, simplified version]\label{prop:pabi-new}
	Suppose $X_T \sim \CNI_c(\mu_0,\{\phi_t\},\{\xi_t\})$ and $X_T' \sim \CNI_c(\mu_0',\{\phi_t\},\{\xi_t\})$ where the initial distributions satisfy $W_{\infty}(\mu_0,\mu_0') \leq D$, the update functions $\phi_t$ are $c$-contractive, and the noise distributions $\xi_t = \cN(0,\sig^2 I_d)$ are Gaussian. Then
	\begin{align*}
		\Dalplr{X_T}{X_T'} 
		\leq \frac{\alpha D^2}{2\sig^2} \cdot 
		\begin{cases}
			1/T & \text{ for }c = 1
			\\ c^{2T}   & \text{ for }c < 1
		\end{cases}
	\end{align*}
\end{prop}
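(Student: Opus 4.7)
The plan is to iteratively apply the two core PABI lemmas---contraction-reduction and shift-reduction---to track the shifted R\'enyi divergence $\Dalpshift{z_t}{X_t}{X_t'}$ along a carefully chosen sequence of shifts $z_0, z_1, \dots, z_T$. The boundary conditions are $z_0 = D$ (at which the shifted divergence vanishes because $W_{\infty}(\mu_0,\mu_0') \leq D$ by assumption) and $z_T = 0$ (at which the shifted divergence reduces to the unshifted R\'enyi divergence we want to bound). Each CNI step $X_{t+1} = \phi_{t+1}(X_t) + Z_t$ factors through a $c$-contraction followed by Gaussian convolution, which I would handle as follows. Contraction-reduction gives $\Dalpshift{cz}{\phi_{\#}\mu}{\phi_{\#}\nu} \leq \Dalpshift{z}{\mu}{\nu}$ for any $c$-contraction $\phi$, by pushing the optimal $W_{\infty}$-coupler through $\phi$ and invoking data-processing (Lemma~\ref{lem:renyi-dataprocess}). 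Shift-reduction gives $\Dalpshift{z-a}{\mu \ast \cN(0,\sigma^2 I_d)}{\nu \ast \cN(0,\sigma^2 I_d)} \leq \Dalpshift{z}{\mu}{\nu} + \tfrac{\alpha a^2}{2\sigma^2}$ for any $a \in [0,z]$; here the penalty $\alpha a^2/(2\sigma^2)$ is precisely the R\'enyi divergence between two $\sigma^2$-variance Gaussians whose means are displaced by $a$ (Lemma~\ref{lem:renyi-gaussians}), which generalizes the KL form of shift-reduction stated in the text.

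Composing, one CNI iteration takes shift $z_t$ to $z_{t+1} := c z_t - a_{t+1}$ at additive cost $\alpha a_{t+1}^2/(2\sigma^2)$. Telescoping across $T$ iterations yields
\[
\Dalp{X_T}{X_T'} \;=\; \Dalpshift{0}{X_T}{X_T'} \;\leq\; \sum_{t=1}^{T} \frac{\alpha a_t^2}{2\sigma^2},
\]
subject to the boundary constraint $c^T D = \sum_{t=1}^T c^{T-t} a_t$, obtained by unrolling the recurrence $z_0 = D$, $z_T = 0$. It then remains to choose a nonnegative sequence $(a_t)_{t=1}^T$ that minimizes $\sum_t a_t^2$ subject to this linear constraint. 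For $c=1$ the constraint is $\sum_t a_t = D$, and the uniform choice $a_t = D/T$ gives $\sum_t a_t^2 = D^2/T$, yielding the stated bound $\alpha D^2/(2T\sigma^2)$. For $c<1$, Cauchy--Schwarz (equivalently, Lagrange multipliers) gives the geometric optimizer $a_t \propto c^{T-t}$, and a direct computation yields $\sum_t a_t^2 = c^{2T} D^2 (1-c^2)/(1-c^{2T}) \leq c^{2T}D^2$, which establishes the second bound.

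The main obstacle is essentially bookkeeping: verifying the two PABI lemmas at the level of R\'enyi divergences rather than just KL (straightforward using Lemmas~\ref{lem:renyi-dataprocess} and~\ref{lem:renyi-gaussians}) and solving the shift recursion with an optimal schedule. The deeper technical content---namely the Optimal Transport coupling that makes shift-reduction strictly sharper than the naive tensorization-plus-data-processing bound---is encapsulated in the black-box lemmas from~\citep{AltTal22dp}, and requires no new ideas here; the novelty of the proposition lies in how cleanly these tools combine to give the dichotomy between the $1/T$ rate for $c=1$ and the geometric $c^{2T}$ rate for $c<1$ with an explicit dependence on the initial diameter $D$.
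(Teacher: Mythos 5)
Your proposal is correct and follows essentially the same route as the paper: iterate the contraction-reduction and shift-reduction lemmas to track a shift budget, then optimize the allocation $\{a_t\}$ subject to the linear boundary constraint. The only cosmetic differences are that you phrase the recursion forward ($z_{t+1} = c z_t - a_{t+1}$, $z_0 = D$, $z_T = 0$) where the paper unrolls backward, and that for $c<1$ you plug in the Cauchy--Schwarz-optimal schedule $a_t \propto c^{T-t}$ (giving $\sum a_t^2 = c^{2T}D^2(1-c^2)/(1-c^{2T})$), whereas the paper uses the cruder single-shift choice $a_T = c^T D$ (giving exactly $c^{2T}D^2$) and defers the sharper optimal constant to its Remark~\ref{rem:pabi-continuous}; both satisfy the stated bound.
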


The proof of this proposition is a simple combination of two lemmas, which disentangle how the shifted R\'enyi divergence is affected by the contraction and noise operations defining the CNI. See the techniques section \S\ref{ssec:intro-tech} for a high-level overview. Because these arguments have previously only appeared in the differential privacy literature and also because they can be simplified\footnote{
	The Privacy Amplification by Iteration bound~\citep[Proposition 3.2]{AltTal22dp} from DP generalizes Proposition~\ref{prop:pabi-new} in two ways: the update functions $\phi_t$ in the two CNI are different, and the bound uses an intermediate time between $0$ and $T$. The simplifications are possible here because for mixing, the goal is to understand the stability of the same algorithm when run on two different initializations; see \S\ref{ssec:intro-prev} for a discussion of the differences between DP and mixing.
 } for the present purpose of mixing time analyses, in the interest of accessibility to the sampling community, we recall in Appendix~\ref{app:pabi} these two lemmas, their proofs, and how they imply Proposition~\ref{prop:pabi-new}.

\section{Mixing Time for Log-Concave Distributions}\label{sec:convex}

In this section we establish tight mixing bounds on the Langevin Algorithm to its stationary distribution $\pieta$ under the assumption that the target distribution is log-concave (equivalently, the potentials are convex).

\begin{theorem}[Upper bound: mixing of the Langevin Algorithm for log-concave targets]\label{thm:convex-ub}
	Consider any convex set $\cK \subset \R^d$ of diameter $D$, any $M$-smooth convex potentials $f_i : \cK \to \R$, any stepsize $\eta \leq 2/M$, and any batch size $b \in [n]$. The Langevin Algorithm mixes to its stationary distribution $\pieta$ in time
	\[
		T_{\mathrm{mix},\, \TV}\left( \frac{1}{4} \right) \leq \frac{2D^2}{\eta}\,.
	\]
\end{theorem}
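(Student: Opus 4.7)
The plan is to recast the Langevin iteration as a Contractive Noisy Iteration (Definition~\ref{def:cni}) and invoke the Privacy Amplification by Iteration bound of Proposition~\ref{prop:pabi-new}. First, I would couple two copies of the chain, started from the arbitrary $X_0 \in \cK$ of interest and from $X_0' \sim \pieta$, and driven by the \emph{same} Gaussian noise $Z_t$ and the \emph{same} random minibatches $B_t$. Stationarity of $\pieta$ gives $X_T' \sim \pieta$, so $\TV(X_T,\pieta) \leq \TV(X_T, X_T')$, and it suffices to bound this latter quantity in KL and then invoke Pinsker (Lemma~\ref{lem:renyi-ineq}).

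The central manipulation is to move the Gaussian noise outside the projection so that the iteration fits the CNI template. I would introduce the pre-projection variable $V_t := X_t - \eta G_t + Z_t$, which satisfies $X_{t+1} = \proj[V_t]$ and the recursion
\[
V_{t+1} \;=\; \phi_{t+1}(V_t) + Z_{t+1}, \qquad \phi_{t+1}(v) \;:=\; \proj[v] \,-\, \eta\, G_{t+1}(\proj[v]),
\]
where $G_{t+1}(\cdot) = \tfrac{1}{b}\sum_{i \in B_{t+1}} \nabla f_i(\cdot)$ is the stochastic-gradient operator associated with the batch $B_{t+1}$. For each fixed realization of $B_{t+1}$, the map $\phi_{t+1}$ is the composition of the projection $\proj$, which is $1$-contractive by Lemma~\ref{lem:contractive-proj}, and one Gradient Descent step on the convex $M$-smooth average potential $\tfrac{1}{b}\sum_{i \in B_{t+1}} f_i$, which is $1$-contractive by Lemma~\ref{lem:contractive-gd} because $\eta \leq 2/M$. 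Hence $(V_t)$ is a $1$-CNI driven by the Gaussian noise $\cN(0, 2\eta I_d)$. Applying the same contractive reasoning at $t = 0$ under the coupling gives $\|V_0 - V_0'\| \leq \|X_0 - X_0'\| \leq D$, so $W_\infty(V_0, V_0') \leq D$.

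Applying Proposition~\ref{prop:pabi-new} with $c=1$, $\sigma^2 = 2\eta$, and $\alpha = 1$ then yields a bound of the form $\KLarg{V_{T-1}}{V_{T-1}'} \leq \frac{D^2}{4\eta(T-1)}$. The data-processing inequality (Lemma~\ref{lem:renyi-dataprocess}) applied to $\proj$ transfers this bound to $\KLarg{X_T}{X_T'}$, and Pinsker's inequality then converts it to a $\TV$ bound of order $D / \sqrt{\eta T}$. Taking $T$ of order $D^2/\eta$ forces $\TV$ below $1/4$; tracking the constants carefully recovers the factor of $2$ in the theorem statement.

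The main obstacle I anticipate is precisely the CNI reformulation: Proposition~\ref{prop:pabi-new} requires the noise to act \emph{outside} the contractive map, whereas in~\eqref{eq:la} the noise $Z_t$ sits \emph{inside} the projection. The pre-projection shift $V_t = X_t - \eta G_t + Z_t$ is the cleanest fix I see, but it forces me to simultaneously verify two things: that $\phi_{t+1}$ remains $1$-contractive with \emph{stochastic} gradients (this works because any fixed batch yields a convex $M$-smooth average potential, so Lemma~\ref{lem:contractive-gd} applies verbatim per realization), and that the initial $W_\infty$ between the coupled pre-projection variables is still controlled by $D$ (which, conveniently, follows from the very same contractivity). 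Once these are in hand, PABI does the heavy lifting and the TV bound follows mechanically.
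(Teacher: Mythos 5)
Your argument matches the paper's proof essentially step for step: couple the two chains on shared noise and minibatches, recast the pre-projection iterates as a $1$-CNI (the paper's auxiliary sequence $Y_t$ plays exactly the role of your $V_t$), verify contractivity of $\phi_t$ via Lemmas~\ref{lem:contractive-gd} and~\ref{lem:contractive-proj}, apply Proposition~\ref{prop:pabi-new} with $\alpha=1$ and $\sigma^2=2\eta$, push the bound through $\proj$ by data processing, and finish with Pinsker. The one cosmetic wrinkle is your choice to initialize the auxiliary chain at $V_0 = X_0 - \eta G_0 + Z_0$ rather than at $X_0$ itself: this spends one of the $T$ noise injections before PABI starts counting, so you get $\KLarg{X_T}{X_T'}\le D^2/\bigl(4\eta(T-1)\bigr)$ and hence $T_{\mathrm{mix},\,\TV}(1/4)\le 2D^2/\eta + 1$, whereas initializing the auxiliary process at $Y_0 = X_0$ (for which $W_\infty(Y_0,Y_0')\le D$ is immediate since both endpoints lie in $\cK$) lets all $T$ injections count and recovers the stated $2D^2/\eta$ exactly.
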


The next result proves that this mixing bound is tight up to a constant factor. No attempt has been made to optimize this constant.

\begin{theorem}[Lower bound: mixing of the Langevin Algorithm for log-concave targets]\label{thm:convex-lb}
	Consider running the Langevin Algorithm with any stepsize $\eta > 0$ and any batch size $b \in [n]$ on the identically zero potentials $f_i \equiv 0$ over the interval $\cK  = [-D/2,D/2] \subset \R$. 
	Then
	\[
	T_{\mathrm{mix},\,\TV}\left( \frac{1}{4} \right)
	\geq \frac{D^2}{100\eta}\,.
	\]
\end{theorem}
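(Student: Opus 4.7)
The plan is to exhibit an initialization for which the iterate distribution stays confined to the left half of $\cK$, while $\pieta$ places exactly half its mass on the right half. Take $X_0=-D/2$. Since $f_i\equiv 0$, the stochastic gradient vanishes for every batch, so the update degenerates to the projected Gaussian walk
\[
X_{t+1}=\proj[X_t+Z_t],\qquad Z_t\sim\cN(0,2\eta).
\]
I will show that (i) $\Pr[X_T\ge 0]$ is tiny for every $T\le D^2/(100\eta)$, and (ii) $\pieta([0,D/2])=\tfrac12$. Combined, these force $\TV(X_T,\pieta)>\tfrac14$.

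For (i), I couple $X_t$ with the same-noise walk $W_t$ reflected only at the lower boundary: $W_0=-D/2$ and $W_{t+1}=\max(W_t+Z_t,-D/2)$. A three-case induction (on whether $X_t+Z_t$ lies below $-D/2$, in $\cK$, or above $D/2$) shows $X_t\le W_t$ for all $t$. Writing $Y_t=-D/2+\sum_{u<t}Z_u$ for the unreflected walk, the discrete Skorokhod identity
\[
W_T\;=\;-D/2+\max_{0\le s\le T}(Y_T-Y_s)
\]
follows by induction on $T$. Reversing time gives $\max_{0\le s\le T}(Y_T-Y_s)\stackrel{d}{=}\max_{0\le u\le T}\tilde Y_u$ for a fresh $\cN(0,2\eta)$-increment walk $\tilde Y$ started at $0$, and the Gaussian reflection principle with a standard Chernoff bound yields
\[
\Pr[X_T\ge 0]\;\le\;\Pr[W_T\ge 0]\;\le\;2\,\Pr\!\big[\cN(0,2\eta T)\ge D/2\big]\;\le\;2\,e^{-D^2/(16\eta T)}\;\le\;2\,e^{-25/4}
\]
whenever $T\le D^2/(100\eta)$.

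For (ii), observe that $\cK$, the identically zero potential, and the Gaussian noise are all symmetric under $x\mapsto -x$; hence the Markov kernel commutes with this reflection and $\pieta$ is invariant under it. Moreover, the transition kernel has no atom at any interior point of $\cK$ (since $Z$ has no atoms), so $\pieta$ has no atom at $0$ and therefore $\pieta([0,D/2])=\tfrac12$. Combining,
\[
\TV(X_T,\pieta)\;\ge\;\pieta([0,D/2])-\Pr[X_T\in[0,D/2]]\;\ge\;\tfrac12-2e^{-25/4}\;>\;\tfrac14
\]
for every $T<D^2/(100\eta)$, giving the claim.

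The only non-routine ingredient is the one-sided reflection coupling together with the Skorokhod identity, which reduce the analysis to controlling the maximum of a Gaussian random walk; both are classical but require care to handle the top-boundary projection cleanly. Everything else---a Gaussian tail bound and a symmetry argument---is elementary, and the proof is uniform in the batch size $b$ since all gradients vanish identically.
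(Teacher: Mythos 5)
Your proof is correct, but it takes a noticeably more machinery-heavy route than the paper. Both arguments reduce the problem to showing that $\Pr[X_T\ge 0]$ stays below $1/4$ while $\pieta([0,D/2])=1/2$ by symmetry, but the paths to that probability bound diverge. You initialize at the boundary $X_0=-D/2$, which forces you to handle the lower-boundary projection explicitly; you do this by coupling with the one-sided reflected walk $W_t$, invoking the discrete Skorokhod identity $W_T=-D/2+\max_{0\le s\le T}(Y_T-Y_s)$, then time-reversing and applying the reflection principle. All of these steps are correct (the three-case induction giving $X_t\le W_t$ checks out, as does the time-reversal argument), and you actually obtain a somewhat sharper exponent ($D^2/(16\eta T)$ versus the paper's $D^2/(64\eta T)$). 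The paper instead initializes at the \emph{interior} point $X_0=-D/4$: then on the event that the free walk $\max_{t\le T}\bigl|\sum_{s\le t}Z_s\bigr|<D/4$, no projection ever occurs and $X_t$ stays inside $(-D/2,0)$ deterministically, so one can directly apply a maximal inequality for the unreflected Gaussian random walk (Doob-type bound) without any Skorokhod decomposition, coupling, or time reversal. That one-line trick of backing off $D/4$ from the boundary is what makes the paper's argument shorter; the price is a factor of 4 in the exponent, which is irrelevant for the stated constant $1/100$. Your approach is more robust to the exact choice of $X_0$ and illustrates the cleaner structural fact that the projected walk is stochastically dominated by the one-sided reflected walk, which is a nice observation, though more than is needed here.

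One minor presentational point: you assert $\pieta$ has no atom at $0$ because the transition kernel has no interior atoms; this is right, but it is worth noting that the kernel \emph{does} have atoms at $\pm D/2$, so $\pieta$ can charge the endpoints --- the symmetry argument still gives $\pieta([0,D/2])=\tfrac12$ because the only potentially problematic atom (at $0$) is absent.
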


We remark that the fast mixing in total variation in Theorem~\ref{thm:convex-ub} implies the following fast mixing in other notions of distance via standard comparison inequalities and the fact that the mixing contraction rate for total variation is worst-case among all $f$-divergences; details in \S\ref{app:otherdists}. 

\begin{cor}[Mixing bounds for other notions of distance]\label{cor:convex-mixing}
	Consider the setup in Theorem~\ref{thm:convex-ub}. 
	The Langevin Algorithm mixes to $\eps$ error in:
	\begin{itemize}
		\item $T_{\mathrm{mix},\, D_{\alpha}}(\eps) \lesssim \frac{D^2}{\eta}(\alpha + \logeps)$.
		\item $T_{\mathrm{mix},\, \KL}(\eps) \lesssim \frac{D^2}{\eta} \logeps$.
		\item $T_{\mathrm{mix},\, \chi^2}(\eps) \lesssim \frac{D^2}{\eta} \logeps$.
		\item $T_{\mathrm{mix},\, \TV}(\eps) \lesssim \frac{D^2}{\eta} \logeps$.
		\item $T_{\mathrm{mix},\, \Hell}(\eps) \lesssim \frac{D^2}{\eta } \logeps$.
	\end{itemize}
\end{cor}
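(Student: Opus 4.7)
The plan is to combine Proposition~\ref{prop:pabi-new} (PABI) with the TV mixing bound of Theorem~\ref{thm:convex-ub} and the standard comparison inequalities. The Langevin update fits the CNI template of Definition~\ref{def:cni}: a projection (contractive by Lemma~\ref{lem:contractive-proj}) composed with a stochastic mini-batch gradient step (contractive by Lemma~\ref{lem:contractive-gd} since $\eta\leq 2/M$ and each $f_i$ is convex), followed by independent $\cN(0,2\eta I_d)$ noise. Applying Proposition~\ref{prop:pabi-new} with $c=1$, $\sigma^2=2\eta$, and the ``ghost'' CNI initialized at $X'_0\sim\pieta$ (so $X'_T\sim\pieta$ for every $T$ by stationarity), and using that $W_\infty(\mu_0,\pieta)\leq D$ since both $\mu_0$ and $\pieta$ are supported on $\cK$, I obtain
\[
\cD_\alpha(X_T\|\pieta)\leq \frac{\alpha D^2}{4\eta T}.
\]
This immediately yields $T_{\mathrm{mix},\cD_\alpha}(\eps)\leq \alpha D^2/(4\eta\eps)$ and, via Lemma~\ref{lem:renyi-ineq} with $\alpha\in\{1,2\}$, matching $1/\eps$-rate bounds for KL, $\chi^2$, TV, and Hellinger.

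To replace the unwanted $1/\eps$ factor by $\log(1/\eps)$ I would split each run into a warm-up phase of length $T_W = O((D^2/\eta)\alpha)$ (or $O(D^2/\eta)$ for the $f$-divergences), which brings the relevant divergence below an absolute constant via the PABI bound above, followed by a boosting phase. Theorem~\ref{thm:convex-ub} ensures that the kernel $P_\eta^{T_0}$ with $T_0=2D^2/\eta$ has Dobrushin coefficient at most $1/2$. For TV this halves the distance per block by submultiplicativity of the Dobrushin coefficient; for KL, $\chi^2$, and Hellinger I would invoke the classical fact that any $f$-divergence contracts under a Markov kernel no slower than TV, so each block of length $T_0$ again at least halves the divergence, and $O(\log(1/\eps))$ blocks suffice. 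Since the three target quantities are bounded monomially by the corresponding R\'enyi divergence of order $1$ or $2$ (Lemma~\ref{lem:renyi-ineq}), this gives the $(D^2/\eta)\log(1/\eps)$ rates.

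The main obstacle will be the R\'enyi boosting when $\alpha>1$, since $\cD_\alpha$ is not an $f$-divergence and does not automatically inherit TV contraction. My plan is to invoke a strong data processing inequality for R\'enyi divergence: once the Dobrushin coefficient of $P_\eta^{T_0}$ is below $1/2$, $\cD_\alpha$ also contracts geometrically per block at a rate bounded away from $1$ uniformly in $\alpha$ (up to a constant multiplicative factor), yielding the desired $\log(1/\eps)$ factor after the $O(\alpha D^2/\eta)$ warm-up. Combining both phases produces $T_{\mathrm{mix},\cD_\alpha}(\eps)\lesssim (D^2/\eta)(\alpha + \log(1/\eps))$, and the remaining rows of the corollary follow by setting $\alpha\in\{1,2\}$ in this bound and applying Lemma~\ref{lem:renyi-ineq}.
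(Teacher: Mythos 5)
Your strategy for the four $f$-divergences (TV, KL, $\chi^2$, Hellinger) is correct and matches the paper's: boost $T_{\mathrm{mix},\TV}(1/4)\leq 2D^2/\eta$ from Theorem~\ref{thm:convex-ub} to arbitrary $\eps$ via the standard exponential-boosting argument (Lemma~\ref{lem:tv-mixing-exponential}), then transfer the per-block contraction to the other divergences via the fact that the TV strong data-processing constant is the worst case among all $f$-divergences (Lemma~\ref{lem:tv-worst}). Your PABI-based warm-up bound $\cD_\alpha(X_T\|\pieta)\leq \alpha D^2/(4\eta T)$ is also exactly the intermediate inequality the paper uses.

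The gap is in the R\'enyi boosting for $\alpha>1$. You correctly flag that $\cD_\alpha$ is not an $f$-divergence so Lemma~\ref{lem:tv-worst} does not apply to it directly, and you propose to ``invoke a strong data processing inequality for R\'enyi divergence'' giving geometric contraction per block at a rate bounded away from $1$ uniformly in $\alpha$. That is not an off-the-shelf fact and is precisely the nontrivial step you cannot just cite; in fact it is the point the paper has to argue around. The paper's route is to pass through the order-$\alpha$ Hellinger divergence $\Hell_\alpha$, which \emph{is} an $f$-divergence and is monotonically related to $\cD_\alpha$ by $\cD_\alpha=\tfrac{1}{\alpha-1}\log\left(1+(\alpha-1)\Hell_\alpha\right)$ (Lemma~\ref{lem:tv-worst-renyi}). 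Concretely: after a PABI warm-up of $\lesssim \alpha D^2/\eta$ iterations one has $\cD_\alpha\leq 1$, hence $\Hell_\alpha\lesssim e^{\alpha-1}$; then each block of $\Theta(D^2/\eta)$ iterations contracts $\Hell_\alpha$ by a constant factor by the $f$-divergence-vs-TV comparison (Lemma~\ref{lem:tv-worst}); and finally $\cD_\alpha\leq\Hell_\alpha$ (from $\log(1+x)\leq x$) converts the geometric decay of $\Hell_\alpha$ into the claimed bound on $\cD_\alpha$. You should replace the appeal to an unnamed R\'enyi SDPI with this explicit $\Hell_\alpha$ detour; as written, that step of your argument is unsupported.
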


\subsection{Upper Bound (Proof of Theorem~\ref{thm:convex-ub})}\label{ssec:convex-ub}

		\paragraph*{Step 1: Coupling the iterates.} Consider running the Langevin Algorithm for $T$ iterations from two different initializations $X_0 \sim \mu_0$ and $X_0' \sim \pieta$, where $\mu_0$ is an arbitrary probability distribution supported on $\cK$, and $\pieta$ is the stationary distribution of the algorithm. Then by definition of the algorithm's updates, we can couple the random batches $\{B_t\}_{t=0}^{T-1}$ and the noise $\{Z_t\}_{t=0}^{T-1}$ so that
	\begin{align*}
		X_{t+1} &= \projlr{X_t  - \frac{\eta}{b} \sum_{i \in B_t} \nabla f_i(X_t)  + Z_t} 
		\\  
		X_{t+1}' &= \projlr{X_t' - \frac{\eta}{b} \sum_{i \in B_t}  \nabla f_i( X_t') + Z_t}
	\end{align*}
	where $Z_t \sim \cN(0,2\eta I_d)$ are drawn independently. 
	
	\paragraph*{Step 2: Construction of auxiliary Contractive Noisy Iterations.}\footnote{This construction allows us to combine the two contraction maps (projection and stochastic gradient update) into a single contraction, so that the new sequences are bona fide CNI. Alternatively, one can directly argue about the original $\{X_t\}$, $\{X_t'\}$ sequences as ``CNCI'' or ``Projected CNI'' using the contraction-reduction lemma twice to analyze each iteration, as described in~\citep{AltTal22dp}. We adopt this shifting here for simplicity of exposition.} Define auxiliary sequences $\{Y_t\}_{t=0}^T$ and $\{Y'_t\}_{t=0}^T$ as follows. Initialize $Y_0 = X_0$ and $Y'_0 = X'_0$. Then update via
	\begin{align*}
		Y_{t+1} &= \phi_{t+1}(Y_t) + Z_{t}\\
		Y'_{t+1} &= \phi_{t+1}(Y'_t) + Z_{t}
	\end{align*}
	using the same update function, which is defined as
	\begin{align*}
		\phi_{t+1}(y) &= \projlr{y} - \frac{\eta}{b} \sum_{i \in B_t} \nabla f_i(\projlr{y}).
	\end{align*}
	Observe that by induction, these auxiliary processes satisfy $X_t = \projlr{Y_t}$ and $X_t' = \projlr{Y_t'}$.
	
	In order to show that these two auxiliary processes are $1$-CNI (henceforth simply called CNI), it suffices to check that the stochastic update functions $\phi_t$ are contractive, as shown next. 
	
	\begin{obs}[Contractivity of $\phi_t$]\label{obs:convex-contractive-update}
		Suppose the potentials $f_i$ are convex and $M$-smooth. If $\eta \leq 2/M$, then $\phi_t$ is contractive.
	\end{obs}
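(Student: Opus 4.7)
The plan is to write $\phi_{t+1}$ as the composition of two maps, each of which is already known to be contractive by results recalled in the preliminaries, and then invoke the fact that compositions of contractive maps are contractive.

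First, I would observe that $\phi_{t+1} = \psi_{t+1} \circ \proj$, where
\[
\psi_{t+1}(x) := x - \frac{\eta}{b}\sum_{i \in B_t} \nabla f_i(x)
\]
is precisely the Gradient Descent update with stepsize $\eta$ applied to the minibatch-averaged potential $F_{B_t}(x) := \frac{1}{b}\sum_{i \in B_t} f_i(x)$. The projection factor $\proj$ is contractive by Lemma~\ref{lem:contractive-proj}, since $\cK$ is convex, so the whole argument reduces to showing that $\psi_{t+1}$ is contractive.

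Next, I would note that convexity and $M$-smoothness are each preserved under taking non-negative convex combinations, so $F_{B_t}$ is a (deterministic, for each fixed realization of the minibatch $B_t$) convex and $M$-smooth function. Applying Lemma~\ref{lem:contractive-gd} to $F_{B_t}$ with parameters $m = 0$ and smoothness constant $M$, together with the hypothesis $\eta \leq 2/M$, yields that $\psi_{t+1}$ is $c$-contractive for
\[
c = \max_{\lambda \in \{0,M\}} |1 - \eta \lambda| = \max\{1, |1 - \eta M|\} = 1,
\]
where the last equality uses $0 < \eta M \leq 2$. Thus $\psi_{t+1}$ is contractive for every realization of $B_t$, and combining this with the contractivity of $\proj$ gives the claim $\|\phi_{t+1}(y) - \phi_{t+1}(y')\| \leq \|y - y'\|$ pointwise in the randomness of $B_t$.

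There is no real obstacle here: the observation is essentially a bookkeeping step confirming that the auxiliary process constructed in Step 2 genuinely satisfies the hypotheses required to apply Proposition~\ref{prop:pabi-new} (with contraction parameter $c = 1$). The only subtlety worth flagging explicitly is that $\phi_{t+1}$ is a \emph{random} map (through the minibatch $B_t$), so contractivity must hold for every realization of $B_t$; this is automatic from the argument above since the bound $c = 1$ does not depend on which subset $B_t \subseteq [n]$ is drawn. This uniformity is precisely what makes the PABI framework apply identically regardless of batch size, as advertised in the introduction.
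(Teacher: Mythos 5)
Your proof is correct and takes essentially the same approach as the paper: decompose $\phi_t$ as a projection followed by a gradient step, then invoke Lemma~\ref{lem:contractive-proj} and Lemma~\ref{lem:contractive-gd}. The only difference is how you handle the mini-batch average: you observe that $F_{B_t} = \frac{1}{b}\sum_{i \in B_t} f_i$ is itself convex and $M$-smooth and apply Lemma~\ref{lem:contractive-gd} once to it, whereas the paper applies the triangle inequality to bound the averaged gradient step by the average of individual gradient steps and then applies Lemma~\ref{lem:contractive-gd} to each $f_i$. Both routes are equally rigorous and yield the same conclusion; your remark about uniformity over realizations of $B_t$ is exactly the right point to flag.
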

	\begin{proof}
			The claim follows because $\phi_t$ is the composition of a projection and a stochastic gradient descent update, both of which are contractive. Formally, consider any $y,y' \in \R^d$; we show that $\|\phi_t(y) - \phi_t(y')\| \leq \|y - y'\|$. For shorthand, denote $x = \projlr{y}$ and $x' = \projlr{y'}$. Then
		\begin{align}
			\big\| \phi_t(y) - \phi_t(y') \big\|
			&=
			\big\| \big( x - \frac{\eta}{b} \sum_{i \in B_t} \nabla f_i(x) \big) - \big( x' - \frac{\eta}{b} \sum_{i \in B_t} \nabla f_i(x') \big) \big\| \nonumber
			\\ &\leq
			\frac{1}{b} \sum_{i \in B_t} \big\| \left( x - \eta \nabla f_i(x) \right) - \left( x' - \eta \nabla f_i(x') \right) \big\| \nonumber
			\\ &\leq
			\|x - x'\| 		\label{eq:convex-contractive-update}
			\\ &= 
			\|\projlr{y} - \projlr{y'}\| \nonumber
			\\ &\leq \|y - y'\|\,. \nonumber
		\end{align}
			Above, the first inequality uses the triangle inequality, the second inequality uses the fact that a Gradient Descent update is contractive (Lemma~\ref{lem:contractive-gd}), and the third inequality uses the fact that the projection function onto a convex set is contractive (Lemma~\ref{lem:contractive-proj}).
	\end{proof}

	\paragraph*{Step 3: Fast mixing via Privacy Amplification by Iteration.} Since we have established that the two auxiliary sequences $\{Y_t\}_{t=0}^{T}$ and $\{Y_t'\}_{t=0}^T$ are CNI with identical update functions, Proposition~\ref{prop:pabi-new} with $\alpha = 1$ implies
	\[
		\KLlrarg{Y_{T}}{Y_{T}'} \leq \frac{D^2}{4\eta T}\,.
	\]
Now make two observations. First, by the data-processing property of the KL divergence (Lemma~\ref{lem:renyi-dataprocess}) and the fact that $X_T = \projlr{Y_T}$ and $X'_T = \projlr{Y'_T}$, we have $\KLlrarg{X_T}{X_T'} \leq \KLlrarg{Y_{T}}{Y_{T}'}$. Second, by stationarity of $X_0' \sim \pieta$, we have by induction that $X_T' \sim \pieta$. It follows that
	\begin{align}
		\KLlrarg{X_T}{\pieta}
		=
		\KLlrarg{X_T}{X_T'}
		\leq 
		\KLlrarg{Y_{T}}{Y_{T}'} 
		\leq
		\frac{D^2}{4\eta T}\,.
		\label{eq:convex-proof-constant}
	\end{align}
	Therefore the KL divergence is at most $1/8$ after $T = 2D^2/\eta$ iterations. Since a KL divergence bound of $1/8$ implies a TV bound of $1/4$ by Pinkser's inequality (Lemma~\ref{lem:renyi-ineq}), we conclude that
	\begin{align}
		T_{\mathrm{mix},\,\TV}(1/4) \leq \frac{2D^2}{\eta}\,.
		\label{eq:convex-proof-TV-partial}
	\end{align}

\subsection{Lower Bound (Proof of Theorem~\ref{thm:convex-lb})}\label{ssec:convex-lb}

For these identically zero potentials, the update of the Langevin Algorithm simplifies to
\[
X_{t+1} = \Pi_{[-D/2,D/2]} \left[X_t + Z_t \right]
\]
where $Z_t \sim \cN(0,2\eta)$. Observe that the stationary distribution $\pieta$ puts half the mass on the set $[0,D/2]$ by symmetry. (Indeed, if the the Markov chain were initialized at $0$, then by induction the law of each iterate is symmetric around $0$, hence the stationary distribution is too.)

\par Thus, in order for the Langevin Algorithm to mix to total variation error $\TV(X_T, \, \pieta) \leq 1/4$, it must be the case that
\begin{align}
	\Prob\big[ X_T \geq 0 \big] \geq 1/4.
	\label{eq:thm-lb-convex-bound}
\end{align}
Therefore, in order to prove the present theorem, it suffices to show that this inequality~\eqref{eq:thm-lb-convex-bound} is false for initialization $X_0 = -D/4$. To this end, observe that
\[
\Prob\Bigg[ X_T \geq 0 \Bigg]
\leq 
\Prob\Bigg[ \max_{t \leq T} \abs{\sum_{s=1}^t Z_s} \geq \frac{D}{4} \Bigg]
\leq
\exp\Bigg( - \frac{D^2}{64 T\eta } \Bigg).
\]
Above, the first step is because under the assumption that $X_0 = -D/4$ and $\max_{t \leq T} \abs{\sum_{s=1}^t Z_s} < D/4$, there is no projection and thus $X_t = -D/4 + \sum_{s=1}^t Z_s$ stays within the interval $(-D/2, 0)$. The second step is by a standard martinginale concentration inequality on the supremum of a discrete Gaussian random walk (Lemma~\ref{lem:rw-helper} below). We conclude in particular that if $T < aD^2/\eta$ for the choice of constant $a = 1/(64 \ln 4) \geq 1/100$, then $\Prob[X_t \in S] < 1/4$, which contradicts~\eqref{eq:thm-lb-convex-bound} and thus completes the proof of Theorem~\ref{thm:convex-lb}.

It remains only to state the concentration inequality used above. This fact can be proved, e.g., as a consequence of Doob's Submartingale Inequality, see~\citep[Page 139]{williams1991probability}.

\begin{lemma}[Concentration inequality for supremum of random walk]\label{lem:rw-helper}
	Suppose $\xi_1, \dots, \xi_T \sim \cN(0,1)$ are independent. Then for any $ a > 0$,
	\[
	\Prob\left( \max_{t \leq T} \abs{ \sum_{s=1}^t \xi_s} \geq a \right)
	\leq \exp\left( - \frac{a^2}{2T} \right).
	\]
\end{lemma}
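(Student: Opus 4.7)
The strategy is the textbook combination of a Chernoff bound with Doob's maximal inequality applied to the exponential submartingale of the Gaussian random walk --- exactly the argument referenced via the citation to Williams. Let $S_t := \sum_{s=1}^{t} \xi_s$ and, for a parameter $\lambda > 0$ to be chosen later, consider the process $Y_t := \exp(\lambda S_t)$. Since the $\xi_s$ are i.i.d.\ $\cN(0,1)$ and hence $\E[\exp(\lambda \xi_s)] = \exp(\lambda^2/2) \geq 1$, the process $Y_t$ is a nonnegative submartingale with respect to the natural filtration, with $\E[Y_T] = \exp(\lambda^2 T/2)$.

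First I would handle the one-sided event $\{\max_{t \leq T} S_t \geq a\}$. Applying Doob's maximal inequality to $Y_t$,
\[
\Prob\!\left[\max_{t \leq T} S_t \geq a\right] = \Prob\!\left[\max_{t \leq T} Y_t \geq e^{\lambda a}\right] \leq e^{-\lambda a}\,\E[Y_T] = \exp\!\left(\tfrac{\lambda^2 T}{2} - \lambda a\right),
\]
and optimizing in $\lambda > 0$ by setting $\lambda = a/T$ yields the one-sided Gaussian tail $\exp(-a^2/(2T))$. For the absolute-value version I would repeat the identical argument with the reflected walk $-S_t$, which has the same law by symmetry of $\cN(0,1)$, to control $\Prob[\min_{t \leq T} S_t \leq -a]$, and combine the two one-sided tails.

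I do not anticipate any real obstacle: the argument is entirely standard. The only minor bookkeeping point is the prefactor in the two-sided bound --- a naive union bound produces a factor of $2$, which is harmless for the application in Theorem~\ref{thm:convex-lb} (since the constant $a = 1/(64 \ln 4)$ was chosen with substantial slack), and can in any case be tightened by instead applying Doob to the single nonnegative submartingale $\exp(\lambda S_t) + \exp(-\lambda S_t)$ and noting that $\max_{t \leq T} \exp(\lambda \abs{S_t}) \leq \max_{t \leq T} (\exp(\lambda S_t) + \exp(-\lambda S_t))$ before optimizing $\lambda$.
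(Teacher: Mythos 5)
Your core argument---exponential submartingale, Doob's maximal inequality, Chernoff optimization in $\lambda$, symmetrize to the two-sided event---is exactly the standard route the paper's reference to Williams has in mind, and your one-sided bound $\exp(-a^2/(2T))$ is correct. However, neither of your two-sided steps actually produces the stated constant: the union bound gives $2\exp(-a^2/(2T))$, and so does the $\cosh$-trick. Applying Doob to $Z_t := \exp(\lambda S_t) + \exp(-\lambda S_t)$, one has $\E[Z_T] = 2\exp(\lambda^2 T/2)$, hence
\[
\Prob\Bigl[\max_{t\leq T}|S_t| \geq a\Bigr] \leq e^{-\lambda a}\,\E[Z_T] = 2\exp\Bigl(\tfrac{\lambda^2 T}{2} - \lambda a\Bigr),
\]
so the factor of $2$ merely migrates from the union bound into $\E[Z_T]$ and survives the optimization over $\lambda$.

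This is not a defect of your argument: the lemma as stated is in fact false when $a/\sqrt{T}$ is small. Take $T=10$, $a=1$. Since for every $s \in (-1,1)$, $\Prob(S_t \in (-1,1) \mid S_{t-1}=s) = \Phi(1-s)-\Phi(-1-s) \leq \Phi(1)-\Phi(-1) \approx 0.683$ (with $\Phi$ the standard normal CDF), iterating yields $\Prob(\max_{t\leq 10}|S_t|<1) \leq 0.683^{10} \approx 0.022$ and hence $\Prob(\max_{t\leq 10}|S_t|\geq 1) \geq 0.978$, exceeding $\exp(-1/20)\approx 0.951$. The correct statement is the one your proof delivers, namely $\Prob(\max_{t\leq T}|S_t|\geq a)\leq 2\exp(-a^2/(2T))$. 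As you observe, the extra factor of $2$ is harmless for Theorem~\ref{thm:convex-lb}: it turns the $\ln 4$ into $\ln 8$, weakening the constant $1/100$ to roughly $1/134$, which the authors explicitly decline to optimize. (One can also check that in the regime used there, $a/\sqrt{T} \geq 10/(4\sqrt{2}) \approx 1.77$, combining L\'evy's maximal inequality with the sharper Gaussian tail $\Phi(-u) \leq \phi(u)/u$ does recover the unfactored estimate.)
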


\section{Mixing Time for Strongly Log-Concave Distributions}\label{sec:sc}

In this section we establish tight mixing bounds on the Langevin Algorithm to its stationary distribution $\pieta$ under the assumption that the target distribution is \emph{strongly} log-concave (equivalently, the potentials are \emph{strongly} convex). In this case, the rapid mixing improves from polynomially to exponentially fast in the diameter $D$. Following is the analog of Theorem~\ref{thm:convex-ub} and Corollary~\ref{cor:convex-mixing}.

\begin{theorem}[Upper bound: mixing of the Langevin Algorithm for strongly log-concave targets]\label{thm:sc-ub}
	Consider any convex set $\cK \subset \R^d$ of diameter $D$, any $m$-strongly convex and $M$-smooth potentials $f_i : \cK \to \R$, any stepsize $\eta < 2/M$, and any batch size $b \in [n]$. Then the Langevin Algorithm mixes to its stationary distribution $\pieta$ in time
	\begin{itemize}
		\item $T_{\mathrm{mix},\, D_{\alpha}}(\eps) \lesssim \log_{1/c} \frac{\alpha D}{\eta \eps}$.
		\item $T_{\mathrm{mix},\, \KL}(\eps) \lesssim \log_{1/c} \frac{D}{\eta \eps}$.
		\item $T_{\mathrm{mix},\, \chi^2}(\eps) \lesssim \log_{1/c} \frac{D}{\eta \eps}$.
		\item $T_{\mathrm{mix},\, \TV}(\eps) \lesssim \log_{1/c} \frac{D}{\eta \eps}$.
		\item $T_{\mathrm{mix},\, \Hell}(\eps) \lesssim \log_{1/c} \frac{D}{\eta \eps}$.
	\end{itemize}
	where $c = \max_{\lambda \in \{m,M\}} |1 - \eta \lambda|$.
\end{theorem}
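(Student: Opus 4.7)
The plan is to replicate the proof of Theorem~\ref{thm:convex-ub} essentially verbatim, with the single substantive change being to invoke the strongly contractive ($c < 1$) case of Proposition~\ref{prop:pabi-new} in place of the contractive ($c = 1$) case. First, I would couple two runs of the Langevin Algorithm using shared random batches $\{B_t\}$ and shared Gaussian noise $\{Z_t\}$, with one run initialized at an arbitrary $X_0 \sim \mu_0$ supported on $\cK$ and the other at $X_0' \sim \pieta$ (which is itself supported on $\cK$ since all iterates are). Then I would introduce the auxiliary pre-projection sequences $Y_t, Y_t'$ exactly as in Step~2 of \S\ref{ssec:convex-ub}, so that $X_t = \projlr{Y_t}$ and $X_t' = \projlr{Y_t'}$ hold by induction, and so that $(Y_t)$ and $(Y_t')$ are CNI with the common update map $\phi_{t+1}(y) = \projlr{y} - \tfrac{\eta}{b} \sum_{i \in B_t} \nabla f_i(\projlr{y})$.

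The key upgrade is to show that under $m$-strong convexity and $M$-smoothness with $\eta < 2/M$, the update $\phi_t$ is not merely contractive but $c$-strongly contractive with $c = \max_{\lambda \in \{m,M\}}|1 - \eta\lambda| < 1$ (the strict inequality uses $0 < m \leq M$ together with $\eta < 2/M$). This follows line by line from Observation~\ref{obs:convex-contractive-update}: the inner and outer projections each contribute a factor of $1$ by Lemma~\ref{lem:contractive-proj}, while the gradient step on the batch-averaged potential $\tfrac{1}{b}\sum_{i \in B_t} f_i$---which inherits $m$-strong convexity and $M$-smoothness from its summands---contributes a factor of $c$ by the strongly contractive clause of Lemma~\ref{lem:contractive-gd}, so the bound~\eqref{eq:convex-contractive-update} sharpens to $\|\phi_t(y) - \phi_t(y')\| \leq c\|x - x'\| \leq c\|y - y'\|$. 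With this strong contractivity, Proposition~\ref{prop:pabi-new} (second case), applied with noise variance $\sigma^2 = 2\eta$ and $W_{\infty}(Y_0, Y_0') \leq D$, gives
\[
\Dalplr{Y_T}{Y_T'} \;\leq\; \frac{\alpha D^2}{4\eta} \cdot c^{2T},
\]
and the same data-processing plus stationarity argument as in~\eqref{eq:convex-proof-constant} then yields $\Dalplr{X_T}{\pieta} \leq \tfrac{\alpha D^2}{4\eta} c^{2T}$.

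Solving for the smallest $T$ that makes the right-hand side at most $\eps$ gives $T \asymp \log_{1/c}(\alpha D^2/(\eta \eps))$, which is of the claimed order $\log_{1/c}(\alpha D/(\eta\eps))$ up to the constant hidden in $\lesssim$ (since the extra $\log D$ inside the logarithm is absorbed by doubling, provided $\alpha D/(\eta\eps) \geq 1$). The remaining four bounds then follow mechanically from Lemma~\ref{lem:renyi-ineq}: $\KL$ is the case $\alpha = 1$; Pinsker's and the Hellinger comparison give the $\TV$ and $\Hell$ bounds from $\KL$; and the $\chi^2$ bound comes from taking $\alpha = 2$ and inverting $\chi^2 = e^{\cD_2} - 1$, which turns a target of $\cD_2 \lesssim \eps$ into $T \lesssim \log_{1/c}(D/(\eta\eps))$. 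I do not anticipate a real obstacle: all the heavy lifting is already packaged in Proposition~\ref{prop:pabi-new}, and the only delicate point is checking that the hypothesis $\eta < 2/M$ is precisely what guarantees $c < 1$ and hence genuine geometric decay, which was the whole reason for the \emph{strict} inequality in the theorem's assumption.
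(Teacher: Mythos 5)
Your proposal is correct and follows essentially the same route as the paper: run the convex-case argument verbatim, observe that $\phi_t$ upgrades from contractive to $c$-strongly contractive under $m$-strong convexity (with $\eta<2/M$ ensuring $c<1$), invoke the $c<1$ branch of Proposition~\ref{prop:pabi-new} to get $\Dalplr{X_T}{\pieta}\leq \alpha D^2 c^{2T}/(4\eta)$, and then read off the five mixing bounds via Lemma~\ref{lem:renyi-ineq}. The only cosmetic difference is that you frame strong contractivity of the gradient step through the batch-averaged potential inheriting $m$-strong convexity and $M$-smoothness, while the paper applies the triangle inequality over the batch and contracts each summand; these are equivalent.
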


\begin{remark}[Special cases of contraction coefficient $c$]
It always holds that $c < 1$. In the setting $\eta < 1/M$, the contraction coefficient simplifies to $c = 1 - \eta m \leq \exp(-\eta m)$, and thus the mixing time is of order $\tfrac{1}{\eta m} \log \tfrac{D}{\eta \eps}$. Another important case is the stepsize choice $\eta = 2/(M+m)$ which minimizes the contraction coefficient $c$, in which case $c = (\kappa-1)/(\kappa+ 1) \leq 1 - 1/\kappa \leq \exp(-\kappa)$ and thus the mixing time is of the order $\kappa \log \tfrac{D}{\eta \eps}$, where $\kappa := \tfrac{M}{m}$ denotes the condition number.
\end{remark}

We also show a lower bound on the mixing which decays at the same exponential rate $c^{\Theta(T)}$ as the upper bound in Theorem~\ref{thm:sc-ub}. Thus our upper and lower bounds on the mixing time match up to a logarithmic factor. For simplicity, we make two choices when stating this lower bound. First, we state it for the unconstrained setting since the upper bound in Theorem~\ref{thm:sc-ub} extends to unconstrained settings at the expense of at most a logarithmic factor, see Appendix~\ref{app:diam}. Second, we state this lower bound for mixing in R\'enyi divergence; this implies analogous lower bounds in divergences such as KL and chi-squared, since those are equal to certain R\'enyi divergences (modulo simple transformations). Analogous mixing time lower bounds in other notions of distance (e.g., $\TV$ or Hellinger) are easily proved by using the same lower bound construction and changing only the final step in the proof which bounds $d(\cN(0,1-c^{2T}),\, \cN(0,1))$ in the relevant metric $d$.

\begin{theorem}[Lower bound: mixing of the Langevin Algorithm for strongly log-concave targets]\label{thm:sc-lb}
	For any stepsize $\eta > 0$, there exist univariate quadratic potentials $f_i$ over $\cK = \R$ that are $m$-strongly convex, $M$-smooth, and satisfy the following. For any R\'enyi divergence parameter $\alpha \geq 1$, any number of iterations $T \in \N$, and any batch size $b \in [n]$, the $T$-th iterate $X_t$ of the Langevin Algorithm mixes to its stationary distribution $\pieta$ no faster than
	\[
	\Dalp{X_t}{ \pieta } \geq \frac{\alpha c^{4T}}{4} \,,
	\]
	where $c = \max_{\lambda \in \{m,M\}} |1 - \eta \lambda|$. Thus in particular, 
	\[
				T_{\mathrm{mix},\, D_{\alpha}}(\eps) \gtrsim \log_{1/c} \frac{\alpha}{\eps}\,.
	\]
\end{theorem}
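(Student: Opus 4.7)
The plan is to adapt the classical Gradient-Descent-on-quadratics lower bound from convex optimization, exploiting the fact that Langevin iterates on a quadratic potential remain Gaussian with an exactly solvable variance recursion. Take $n = 1$, $b = 1$, and $f_1(x) = \tfrac{\lambda}{2}x^2$ on $\cK = \R$, where $\lambda \in \{m, M\}$ is chosen to maximize $|1-\eta\lambda| = c$; this makes $f_1$ simultaneously $m$-strongly convex and $M$-smooth. The Langevin update then collapses to the scalar AR(1) recursion
\[
X_{t+1} \;=\; (1-\eta\lambda)\,X_t + Z_t,\qquad Z_t \sim \cN(0,2\eta),
\]
with contraction coefficient $c$ and additive Gaussian noise of variance $2\eta$.

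Initializing from $X_0 = 0$, an induction shows that every iterate is a centered Gaussian whose variance $v_t$ obeys $v_{t+1} = c^2 v_t + 2\eta$; solving this recursion yields $v_T = \sigma_*^2 (1-c^{2T})$, where $\sigma_*^2 := \tfrac{2\eta}{1-c^2}$ is its unique fixed point. Sending $T\to\infty$ identifies the stationary distribution as $\pieta = \cN(0,\sigma_*^2)$. Applying Lemma~\ref{lem:renyi-gaussians} (the mean term vanishes since both laws are centered), the $\sigma_*^2$ factors cancel, and writing $r := c^{2T}$ the divergence simplifies to
\[
\Dalp{X_T}{\pieta} \;=\; \tfrac{1}{2}\left[-\log(1-r) \;-\; \tfrac{\log(1+(\alpha-1)r)}{\alpha-1}\right].
\]

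It remains to bound this closed-form expression from below by $\tfrac{\alpha r^2}{4}$. Expanding both logarithms in $r$ around $0$, the terms linear in $r$ cancel exactly and the $O(r^2)$ contributions combine to $\tfrac{\alpha r^2}{4}$, recovering the claimed rate $\tfrac{\alpha c^{4T}}{4}$ to leading order. The main obstacle is upgrading this Taylor estimate into a uniform inequality valid for all $\alpha \geq 1$ and $r \in [0,1)$; this reduces to the scalar inequality $\log(1+\beta r) + \beta \log(1-r) \leq -\tfrac{\beta(\beta+1)}{2}\, r^2$ with $\beta := \alpha - 1 \geq 0$, which I would attempt to prove either by comparing derivatives at $r = 0$ and sign-tracking the residual, or by grouping the power-series terms pairwise. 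The KL endpoint $\alpha = 1$ is recovered as the limit $\beta \to 0$, where $\log(1+\beta r)/\beta \to r$ and the bracket becomes $\tfrac{1}{2}[-\log(1-r) - r] = \tfrac{r^2}{4} + O(r^3)$. Inverting the bound via $\tfrac{\alpha c^{4T}}{4} \leq \eps$ then produces the claimed mixing-time lower bound $T_{\mathrm{mix},\, D_\alpha}(\eps) \gtrsim \log_{1/c}(\alpha/\eps)$.
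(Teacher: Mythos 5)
Your construction and derivation mirror the paper's proof exactly through the closed-form expression: same scalar quadratic $\tfrac{\lambda}{2}x^2$ with $\lambda$ maximizing $|1-\eta\lambda|$, same AR(1) recursion, same variance fixed point $\sigma_*^2 = 2\eta/(1-c^2)$, and the same application of Lemma~\ref{lem:renyi-gaussians}. You are right to flag the final step as the delicate one, and in fact that is where a genuine obstruction lies: the scalar inequality you set out to prove, $\log(1+\beta r) + \beta\log(1-r) \leq -\tfrac{\beta(\beta+1)}{2}r^2$ with $\beta = \alpha-1 \geq 0$, is \emph{false} whenever $\beta > 1$. Expanding the left side gives $-\tfrac{\beta(\beta+1)}{2}r^2 + \tfrac{\beta(\beta^2-1)}{3}r^3 + O(r^4)$, whose cubic term is strictly positive for $\beta>1$, so the left side exceeds the right for all small $r > 0$. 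Equivalently, the exact divergence expands as $\Dalp{X_T}{\pieta} = \tfrac{\alpha}{4}c^{4T} + \tfrac{1-(\alpha-1)^2}{6}c^{6T} + O(c^{8T})$, whose cubic coefficient is negative once $\alpha>2$, so the theorem's displayed pointwise bound $\Dalp{X_T}{\pieta}\geq\tfrac{\alpha c^{4T}}{4}$ cannot hold for all $T$ when $\alpha>2$ (e.g.\ $\alpha=11$, $c^{2T}=10^{-3}$ already violates it). Neither of your two proposed finishing strategies---sign-tracking the residual or pairwise grouping of power-series terms---can succeed, since the residual genuinely has the wrong sign at third order.

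For context, the paper's own finishing step also does not close: with its $\beta = 1-\alpha < 0$, applying $\log(1+x)\geq x - x^2/2$ at $x = -\beta c^{2T}\geq 0$ gives $\log(1-\beta c^{2T}) \geq -\beta c^{2T} - \tfrac{\beta^2 c^{4T}}{2}$, and dividing by $2\beta<0$ \emph{reverses} the inequality, producing an upper rather than lower bound on the first term. What does hold uniformly in $\alpha\geq 1$ and $T$ is the $\alpha$-free bound $\Dalp{X_T}{\pieta}\geq \tfrac{c^{4T}}{4}$, obtained by pairing $\log(1+x)\leq x$ on the first logarithm with $\log(1-x)\leq -x-x^2/2$ on the second. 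The $\alpha$-dependent estimate $\Dalp{X_T}{\pieta}\gtrsim \alpha c^{4T}$ is recoverable under the additional restriction $(\alpha-1)c^{2T}\lesssim 1$ (use the third-order bound $\log(1+y)\leq y - y^2/2 + y^3/3$ on the first logarithm), which is the regime that matters for the mixing-time corollary at small $\eps$. If you want a clean statement, I would suggest either proving the bound with the $\alpha$ factor only under that restriction, or stating the unconditional bound with the $\alpha$-independent constant.
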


\subsection{Upper Bound (Proof of Theorem~\ref{thm:sc-ub})}\label{ssec:sc-ub}

	The analysis is essentially identical to the analysis of log-concave distributions in Theorem~\ref{thm:convex-ub}. The only difference is that in the present setting of strongly convex potentials, the stochastic update functions $\phi_t$ are not just contractive, but in fact $c$-strongly contractive (proven in Observation~\ref{obs:sc-contractive-update} below). In other words, the auxiliary sequences $\{Y_t\}_t$ and $\{Y_t'\}_t$ are generated by $c$-CNI for $c < 1$.
	Thus the Privacy Amplification by Iteration Bound for $c$-CNI (Proposition~\ref{prop:pabi-new}) implies
	\[
		\Dalplr{Y_{T}}{Y_{T}'} \leq \frac{\alpha D^2 c^{2T}}{4\eta} \,.
	\]
The rest of the proof proceeds identically to give the desired mixing time for the R\'enyi divergence. This immediately implies the fast mixing bounds in all the other desired metrics by the standard comparison inequalities in Lemma~\ref{lem:renyi-ineq}. 
\par (Note that in the present strongly log-concave setting, there is no need to argue indirectly through the Total Variation metric because here the R\'enyi mixing bound obtained by the Privacy Amplification by Iteration argument is already exponentially fast, whereas it is not in the log-concave setting, c.f., Appendix~\ref{app:otherdists}.)
	\par It remains only to prove the strong contractivity of $\phi_t$ in the setting of strongly convex $f_i$.

\begin{obs}[Strong contractivity of $\phi_t$]\label{obs:sc-contractive-update} 
	Assume the functions $f_i$ are $m$-strongly convex and $M$-smooth. If $\eta < 2/M$, then $\phi_t$ is $c$-strongly contractive for $c = \max_{\lambda \in \{m, M\}} |1-\eta\lambda|$.
\end{obs}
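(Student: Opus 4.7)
The plan is to essentially repeat the proof of Observation~\ref{obs:convex-contractive-update} from the log-concave case, with the single change that each individual per-sample Gradient Descent update $x \mapsto x - \eta \nabla f_i(x)$ is now not merely contractive but $c$-strongly contractive. This stronger contractivity is exactly what Lemma~\ref{lem:contractive-gd} provides under the hypotheses that each $f_i$ is $m$-strongly convex, $M$-smooth, and that $\eta \leq 2/M$ (the assumption $\eta < 2/M$ in the statement ensures $c < 1$, which is the new content beyond the log-concave case).

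Concretely, I would factor $\phi_t = \psi_t \circ \proj$ where $\psi_t(x) := x - \frac{\eta}{b}\sum_{i \in B_t} \nabla f_i(x)$ is the stochastic mini-batch gradient step, and bound each factor separately. For the projection $\proj$, Lemma~\ref{lem:contractive-proj} gives plain contractivity. For $\psi_t$, apply the triangle inequality over the batch to get
\[
\|\psi_t(x) - \psi_t(x')\| \leq \frac{1}{b}\sum_{i \in B_t}\big\|(x - \eta \nabla f_i(x)) - (x' - \eta \nabla f_i(x'))\big\| \leq c\,\|x - x'\|,
\]
where the second inequality applies Lemma~\ref{lem:contractive-gd} to each individual $f_i$ (each of which is $m$-strongly convex and $M$-smooth, yielding the same contraction constant $c$). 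Composing these two bounds and using that projection is $1$-Lipschitz gives
\[
\|\phi_t(y) - \phi_t(y')\| \leq c\,\|\proj(y) - \proj(y')\| \leq c\,\|y - y'\|,
\]
so $\phi_t$ is $c$-strongly contractive as claimed.

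There is essentially no obstacle here; the only mild subtlety worth flagging is that the contraction constant $c$ is \emph{uniform} across the batch because each $f_i$ individually satisfies the same strong convexity and smoothness bounds. This is what allows the batch averaging to preserve the rate $c$ rather than degrade it, so the result is independent of the batch size $b$, which is precisely what is needed to feed into Proposition~\ref{prop:pabi-new} in the $c$-CNI regime in the main argument of \S\ref{ssec:sc-ub}.
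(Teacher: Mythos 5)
Your proof is correct and follows essentially the same route as the paper: the paper's proof of this observation simply refers back to the proof of Observation~\ref{obs:convex-contractive-update}, which likewise writes $\phi_t$ as the stochastic gradient step composed with the projection, applies the triangle inequality over the batch, invokes Lemma~\ref{lem:contractive-gd} per-sample (now yielding the strict contraction rate $c < 1$), and finishes with contractivity of $\proj$. Your explicit factoring $\phi_t = \psi_t \circ \proj$ and your remark that $c$ is uniform over the batch are just clean restatements of what the paper does inline, so there is no substantive difference.
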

\begin{proof}
	Identical to the proof of Observation~\ref{obs:convex-contractive-update}, except that the inequality~\eqref{eq:convex-contractive-update} tightens by a factor of $c$ because by Lemma~\ref{lem:contractive-gd}, a Gradient Descent update is $c$-strongly contractive rather than just contractive in the present setting where $f$ is strongly convex rather than just convex.
\end{proof}

\subsection{Lower Bound (Proof of Theorem~\ref{thm:sc-lb})}\label{ssec:sc-lb}

	Consider the univariate quadratic potential
	\[
		f(x) = \frac{\lambda}{2}x^2\,,
	\]
	for the choice of $\lambda = \argmax_{\lambda \in \{m,M\}} |1 - \eta \lambda|$. Clearly this potential is $m$-strongly convex and $M$-smooth as its second derivative is $f''(x) = \lambda \in [m,M]$. For this potential $f$ and the present unconstrained setting $\cK = \R$, the update step~\eqref{eq:la} of the Langevin Algorithm simplifies to:
	\[
		X_{t+1} = (1 - \eta \lambda) X_t + Z_t\,,
	\]
	where $Z_t \sim \cN(0,2\eta)$. By unrolling this update and considering the initialization $X_0 = 0$ (such a choice suffices for the present purpose of proving a lower bound on mixing), we have that
	\[
		X_{t+1} = \sum_{s=0}^t (1-\eta \lambda)^{t-s} Z_s\,.
	\] 
	Now by our choice of $\lambda$ as well as the basic fact that the sum of independent Gaussians is Gaussian with variance equal to the sum of the constitutents' squared variances, it follows that
	\begin{align}
		X_T
		\sim
		\cN\left( 0, 2\eta \sum_{s=0}^{T-1} (1-\eta \lambda)^{2(T-1-s)} \right)
		=
		\cN\left( 0, 2\eta \, \frac{1-c^{2T}}{1-c^2} \right)
		\label{eq:sc-lb:XT-law}
	\end{align}
	Since the Langevin Algorithm has stationary distribution $\pi_{\eta}$, taking the limit $T \to \infty$ implies
	\begin{align}
		\pieta
		=
		\cN\left( 0, 2\eta \, \frac{1}{1-c^2} \right)
		\label{eq:sc-lb:Xinf-law}
	\end{align}
	Now that we have explicitly computed the laws of intermediate iterates $X_T$  and the stationary distribution $\pieta$, both as Gaussians, it is straightforward to compute their R\'enyi divergence:
	\begin{align}
		\Dalplr{X_T}{\pieta}
		&= 
		\Dalplr{\cN\left( 0, 2\eta \, \frac{1-c^{2T}}{1-c^2} \right)}{\cN\left( 0, 2\eta \, \frac{1}{1-c^2} \right)} \nonumber
		\\ &=
		\Dalplr{\cN\left( 0, 1-c^{2T} \right)}{\cN\left( 0, 1\right)} \nonumber
		\\ &=
		\frac{1}{2\beta} \log \left(1 - \beta c^{2T} \right) - \frac{1}{2} \log \left( 1 - c^{2T} \right)\,.\label{eq:sc-lb:renyi-long}
	\end{align}
	Above, the first step is by plugging in the laws of $X_t$ and $\pieta$ from~\eqref{eq:sc-lb:XT-law} and~\eqref{eq:sc-lb:Xinf-law}; the second step is by a change of measure; the third step is by the explicit formula for the R\'enyi divergence between Gaussians (Lemma~\ref{lem:renyi-gaussians}) and simplifying. In this final step, we denote $\beta := 1 - \alpha \leq 0$ for shorthand.
	\par The desired asymptotics $\Dalplr{X_t}{\pieta} \approx \alpha c^{4T} / 2$ now follow from~\eqref{eq:sc-lb:renyi-long} by using the second-order Taylor expansion $\log(1 + \eps ) \approx \eps - \eps^2/2$ to expand both logarithmic terms. Formally, use the elementary inequality $\log (1 + x) \geq x - x^2/2$ for $x \geq 0$ to bound the first term, and use the elementary inequality $\log(1 - x) \leq - x - x^2/2$ for $x \geq 0$ to bound the latter term. We conclude that
	\[
		\Dalplr{X_t}{\pieta}
		\geq
		\frac{1}{2\beta} \left( - c^{2T} - \frac{\beta c^{4T}}{2}  \right) 
		-
		\frac{1}{2} \left( 
		- c^{2T} - \frac{c^{4T}}{2}
		\right)
		=
		\frac{(1 - \beta) c^{4T}}{4}
		=
		\frac{\alpha c^{4T}}{4}\,.
	\]

\section{Discussion}\label{sec:discussion}

We mention a few natural directions for future work that are motivated by the results of this paper.
\begin{itemize}
    \item \underline{Bias of the Langevin Algorithm?} The purpose of this paper is to characterize the mixing time of the Langevin Algorithm to its stationary distribution $\pieta$, for any discretization stepsize $\eta > 0$. In order to sample from the target distribution $\pi$, these mixing bounds must be combined with a discretization bound on the error between $\pieta$ and $\pi$. While bias bounds are known for certain settings (e.g.,~\citep{bubeck2018sampling,VempalaW19,balasubramanian2022towards,durmus2019analysis,chewi2021analysis}), for the vast majority of settings it remains unclear if the current bounds are tight, and at least in certain settings it seems clear that they are not. Our result on the mixing time of the Langevin Algorithm further motivates the quest for optimal bias bounds. 
    \par A related challenge is whether bias bounds can obtained by ``directly'' comparing the stationary distributions $\pi$ and $\pieta$---in contrast to all known bias bounds, which are obtained by taking non-asymptotic mixing bounds to $\pi$ (which compare $\pi$ to the law $\pi_{\eta,T}$ of the $T$-th iterate of the Langevin Algorithm) and sending $T \to \infty$. It seems that any such bias bound is useless for combining with our mixing bounds to $\pieta$, since the indirectness of this argument makes the resulting mixing bound (comparing $\pi_{\eta,T}$ to $\pi$) worse than the original mixing bound used to bound the bias. In fact, directly arguing about properties of the stationary distribution $\pieta$ appears to require new techniques (see the related open questions in~\citep{VempalaW19}). The only results in this vein that we are aware of are in the recent revision to~\citep{VempalaW19} and in the forthcoming work~\citep{AltTal22concentration}, but tight bias bounds remain unclear in the settings of this paper.
    \item \underline{Broader use of these techniques?} The techniques we use in this paper apply at the generality of Contractive Noisy Iterations, which is broader than just the Langevin Algorithm. Can these techniques shed light on the rapid mixing of other sampling algorithms? Most first-order algorithms can be viewed, at least intuitively, as variants of the Langevin Algorithm (see the discussion in \S\ref{ssec:intro-prev}). In particular, can our techniques help understand the mixing time of more sophisticated sampling algorithms which can perform better in practice, such as the Metropolis-Adjusted Langevin Algorithm or Hamiltonian Monte Carlo?
    \item \underline{Reconciliation of mixing analyses?} As discussed in \S\ref{ssec:intro-prev}, our mixing analysis exploits connections between sampling over $\R^d$ and optimization over $\R^d$, whereas many recent analyses exploit connections between sampling over $\R^d$ and optimization over the space $\cP(\R^d)$ of probability distributions. From both a technical as well as pedagogical perspective, it is interesting to ask: to what extent can these seemingly disparate techniques be reconciled? 
    \item \underline{Beyond log-concavity?} An important but difficult question for bridging theory and practice is to understand to what extent fast mixing carries over to families of target distributions $\pi$ under weaker assumptions than log-concavity (i.e., weaker assumptions on potentials than convexity). There have been several recent breakthrough results towards this direction under various assumptions like dissipativity or isoperimetry or under weaker notions of convergence (see the previous work section \S\ref{ssec:intro-prev} and the very recent papers~\citep{balasubramanian2022towards,chewi2022fisher}). However, on one hand tight mixing bounds are unknown for any of these settings (see Footnote~\ref{fn:fisher}), and on the other hand the non-convexity assumptions made in the sampling literature are often markedly different from those in the optimization literature. To what extent can either of these issues be reconciled? Can our techniques help shed light on either question? The key issue with extending our techniques to these settings is that without convexity of the potential, a Gradient Descent step is not contractive, which seems to preclude using the Privacy Amplification by Iteration argument. 
\end{itemize}

	\paragraph*{Acknowledgements.} We are grateful to Adam Smith for an insightful discussion about~\citep{AltTal22dp} which led us to the realization of the rapid mixing in Theorem~\ref{thm:convex-ub}, to John Duchi for a reference to Lemma~\ref{lem:tv-worst}, to Sinho Chewi for sharing an early draft of his book~\citep{chewibook}, and to Murat Erdogdu and Andre Wibisono for encouragement and helpful conversations about the related literature.
	
	\small
	\appendix
	
\section{Deferred Details}\label{app:deferred}

\subsection{Contractivity of Gradient Steps}\label{app:contractivity}

\begin{proof}[Proof of Lemma~\ref{lem:contractive-gd}]
	Since our purpose here is to explain the idea behind this classical lemma, we will assume for simplicity that $f$ is twice differentiable. (A proof for the general case can be found in standard convex optimization texts.)
	Consider any two points $x,x'$. By the Fundamental Theorem of Calculus,
	\[
	\nabla f(x) - \nabla f(x')
	=
	\int_0^1 \nabla^2 f \left( tx + (1-t)x' \right) (x - x') dt \,.
	\]
	Thus
	\begin{align*}
		\left\| \left(x - \eta \nabla f(x)\right) - \left(x' - \eta \nabla f(x')\right) \right\|
		&=
		\left\| \left( I_d - \eta \int_0^1 \nabla^2 f \left( tx + (1-t)x' \right)  dt \right)(x - x') \right\|
		\\ &\leq
		\left\| I_d - \eta \int_0^1 \nabla^2 f \left( tx + (1-t)x' \right) dt \right\| \left\| x - x' \right\|
		\\ &\leq \max_{\lambda \in [m,M]} \abs{ 1 - \eta \lambda } \left\| x - x' \right\|
				\\ &= c \left\| x - x' \right\|\,.
	\end{align*}
	Above, the first step is by factoring, the second step is by sub-multiplicativity of the operator norm, the third step is because a twice differentiable function being $m$-strong convexity and $M$-smoothness is equivalent to its Hessians having eigenvalues within $[m,M]$, and the last step is because $\lambda \mapsto \abs{1 - \eta \lambda}$ is convex and thus maximized at the boundary of the interval $[m,M]$. This establishes $c$-contractivity, as desired.
\end{proof}

\subsection{Privacy Amplification by Iteration}\label{app:pabi}

Here we recall (simplified forms of) the two lemmas underpinning the PABI technique, their proofs, and how they imply Proposition~\ref{prop:pabi-new}. These proofs have already appeared in previous work on PABI~\citep{pabi,AltTal22dp} and are provided here only for completeness and for the sake of accessibility of this article beyond the differential privacy community. See the techniques section \S\ref{ssec:intro-tech} for a high-level overview of the PABI technique. 

\subsubsection{Contraction-reduction lemma}

We recall this lemma in a simplified form, where both measures are pushed-forward through the \emph{same} contraction, as this suffices for the present purpose of sampling analyses (in differential privacy one must consider two different contraction maps). In this setting, the proof simplifies, and the lemma can be interpreted as an extension of the classical data-processing inequality for R\'enyi divergence to shifted R\'enyi divergence---under the condition that the data-processing operation is contractive. 

\begin{lemma}[Contraction-reduction lemma, simplified version]\label{lem:cr}
	For any distributions $\mu,\nu \in \cP(\R^d)$ and (random) $c$-contractive function $\phi : \R^d \to \R$,
	\[
	\Dalpshiftlr{cz}{\phi_{\#} \mu}{\phi_{\#} \nu}
	\leq
	\Dalpshiftlr{z}{\mu}{\nu}\,.
	\]
\end{lemma}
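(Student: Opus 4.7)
The plan is to unpack the definition of the shifted R\'enyi divergence and reduce to the (unshifted) data-processing inequality for R\'enyi divergence, using the contractivity of $\phi$ to absorb a factor of $c$ into the shift.

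First I would fix an arbitrary competitor $\mu' \in \cP(\R^d)$ with $W_\infty(\mu,\mu') \leq z$, since by definition of the shifted R\'enyi divergence we have $\Dalpshift{z}{\mu}{\nu} = \inf_{\mu'} \Dalp{\mu'}{\nu}$ over such $\mu'$. The goal reduces to exhibiting a candidate at shift $cz$ on the left-hand side with divergence no larger than $\Dalp{\mu'}{\nu}$; the natural candidate is $\phi_{\#}\mu'$.

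Next I would verify the two ingredients that make this candidate work. (i) $W_\infty(\phi_{\#}\mu,\phi_{\#}\mu') \leq cz$: take any coupling $(X,X')$ of $(\mu,\mu')$ with $\|X-X'\| \leq z$ almost surely (existence follows from the definition of $W_\infty$), and draw $\phi$ from its randomness independently of $(X,X')$; then $(\phi(X),\phi(X'))$ is a valid coupling of $(\phi_{\#}\mu,\phi_{\#}\mu')$, and $c$-contractivity gives $\|\phi(X)-\phi(X')\| \leq c\|X-X'\| \leq cz$ almost surely, so this coupling certifies the $W_\infty$ bound. (ii) $\Dalp{\phi_{\#}\mu'}{\phi_{\#}\nu} \leq \Dalp{\mu'}{\nu}$: this is exactly the data-processing inequality for R\'enyi divergence (Lemma~\ref{lem:renyi-dataprocess}), applied to the (possibly random) map $\phi$.

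Chaining these, $\Dalpshift{cz}{\phi_{\#}\mu}{\phi_{\#}\nu} \leq \Dalp{\phi_{\#}\mu'}{\phi_{\#}\nu} \leq \Dalp{\mu'}{\nu}$, and taking the infimum over $\mu'$ with $W_\infty(\mu,\mu') \leq z$ yields the claim. I do not expect a real obstacle here; the only mild subtlety is the case where $\phi$ is random, which is handled by drawing $\phi$ independently of the $W_\infty$ coupling so that the contractivity bound holds pointwise in the randomness of $\phi$, and by noting that Lemma~\ref{lem:renyi-dataprocess} as stated already allows random data-processing maps. A secondary bookkeeping point is that the infimum in the definition of $\Dalpshift{z}{\mu}{\nu}$ need not be attained, but since the argument holds uniformly for every admissible $\mu'$ this causes no issue.
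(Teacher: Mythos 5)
Your proposal is correct and follows essentially the same route as the paper: pick $\mu'$ with $W_\infty(\mu,\mu') \leq z$, use $\phi_{\#}\mu'$ as the witness for the $cz$-shifted divergence via the observation $W_\infty(\phi_{\#}\mu,\phi_{\#}\mu') \leq c\,W_\infty(\mu,\mu')$, and close with the data-processing inequality (Lemma~\ref{lem:renyi-dataprocess}). The only differences are cosmetic: you avoid assuming the infimum in the definition of the shifted divergence is attained by taking the infimum over competitors at the end, and you spell out the coupling argument (including how the randomness of $\phi$ is handled) which the paper leaves implicit; both are minor refinements in rigor rather than a different approach.
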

\begin{proof}
	Let $\mu'$ be a distribution verifying $\Dalpshift{z}{\mu}{\nu}$; that is, $\mu'$ satisfies $W_{\infty}(\mu,\mu') \leq z$ and $\Dalp{\mu'}{\nu} = \Dalpshift{z}{\mu}{\nu}$. Because $\phi$ is a $c$-contraction, it follows that $W_{\infty}(\phi_{\#}\mu,\phi_{\#}\mu') \leq cW_{\infty}(\mu,\mu') \leq cz$. Thus 
	\[
		\Dalpshiftlr{cz}{\phi_{\#} \mu}{\phi_{\#} \nu} \leq \Dalplr{\phi_{\#}\mu'}{\phi_{\#}\nu} \leq \Dalplr{\mu'}{\nu} = \Dalpshift{z}{\mu}{\nu}\,,
	\]
	where the middle step is due to the data-processing inequality for R\'enyi divergences (Lemma~\ref{lem:renyi-dataprocess}).
\end{proof}

\subsubsection{Shift-Reduction Lemma}

We state this lemma in the form used in differential privacy, with only the minor simplification that the noise does not come from an arbitrary distribution on a Banach space, but instead is simply a Gaussian distribution on $\R^d$. See the techniques section \S\ref{ssec:intro-tech} for intuition of how this lemma can be thought of as a strengthening of the trivial bound
\begin{align}
	\Dalplr{\mu \ast \xi}{\nu \ast \xi} 
	\leq
	\Dalplr{\mu \otimes \xi}{\nu \otimes \xi}
	= 
	\Dalplr{\mu}{\nu} + \Dalplr{\xi}{\xi}
	\label{eq:sr-intuition}
\end{align}
which follows from the data-processing and tensorization properties of R\'enyi divergence---where in words, the strengthening is achieved by moving $a$ units of ``displacement'' between $\mu$ and $\nu$ from the first term to the second term.

\begin{lemma}[Shift-reduction lemma]\label{lem:sr}
	For any distributions $\mu,\nu \in \cP(\R^d)$ and shift $a \geq 0$,
	\[
	\Dalpshiftlr{z}{\mu \ast \cN(0,\sig^2 I_d)}{ \nu \ast \cN(0,\sig^2 I_d)}
	\leq
	\Dalpshiftlr{z+a}{\mu}{\nu}
	+
	\frac{\alpha a^2}{2\sig^2}\,.
	\]
\end{lemma}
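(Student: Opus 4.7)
The plan is to unwind the variational definition on the left-hand side by exhibiting an explicit candidate measure $\rho$ satisfying $W_{\infty}(\mu \ast \cN(0, \sig^2 I_d), \rho) \leq z$ and $\Dalp{\rho}{\nu \ast \cN(0, \sig^2 I_d)} \leq \Dalpshift{z+a}{\mu}{\nu} + \tfrac{\alpha a^2}{2\sig^2}$. Fix any $\eps > 0$ and choose $\mu'$ so that $W_{\infty}(\mu, \mu') \leq z + a$ and $\Dalp{\mu'}{\nu} \leq \Dalpshift{z+a}{\mu}{\nu} + \eps$. By definition of $W_\infty$, there is a coupling $(X_1, X_2)$ with $X_1 \sim \mu$, $X_2 \sim \mu'$, and $\|X_1 - X_2\| \leq z + a$ almost surely.

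The central construction is to absorb up to $a$ units of the $W_\infty$ gap into the Gaussian noise. Define the random shift $V := \min(a, \|X_1 - X_2\|) (X_1 - X_2)/\|X_1 - X_2\|$ (and $V := 0$ when $X_1 = X_2$), which satisfies $\|V\| \leq a$ and $\|X_2 + V - X_1\| \leq \max(0, \|X_1 - X_2\| - a) \leq z$ almost surely. Let $Z \sim \cN(0, \sig^2 I_d)$ be independent of $(X_1, X_2)$, and let $\rho$ be the distribution of $\tilde{Y} := X_2 + V + Z$. Coupling $\tilde{Y}$ with $X_1 + Z \sim \mu \ast \cN(0, \sig^2 I_d)$ through the shared $Z$ gives $W_{\infty}(\mu \ast \cN(0, \sig^2 I_d), \rho) \leq z$ immediately, so the task reduces to bounding $\Dalp{\rho}{\nu \ast \cN(0, \sig^2 I_d)}$.

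The main obstacle I anticipate is that one cannot simply route this through $\mu' \ast \cN(0, \sig^2 I_d)$ as an intermediate, since R\'enyi divergence does not obey a triangle inequality. Instead I would lift to joint distributions on $\R^d \times \R^d$ and compare $(X_2, V + Z)$ against $(X', Z')$, where $X' \sim \nu$ and $Z' \sim \cN(0, \sig^2 I_d)$ are independent. Factoring each joint density into marginal times conditional and pulling a $\sup$ out of the integral (which is valid since $\alpha \geq 1$ and the integrand $p_X^\alpha q_X^{1-\alpha}$ is nonnegative) yields the chain-rule bound
\[
\Dalp{(X_2, V+Z)}{(X', Z')} \leq \Dalp{\mu'}{\nu} + \sup_{x_2} \Dalp{V + Z \mid X_2 = x_2}{\cN(0, \sig^2 I_d)}.
\]
Conditional on $X_2 = x_2$, the law of $V + Z$ is a mixture of translated Gaussians $\cN(v, \sig^2 I_d)$ with $\|v\| \leq a$, so the joint quasi-convexity property $\exp((\alpha - 1)\, \Dalp{\E[P]}{Q}) \leq \E[\exp((\alpha - 1)\, \Dalp{P}{Q})]$ combined with the identity $\Dalp{\cN(v, \sig^2 I_d)}{\cN(0, \sig^2 I_d)} = \alpha \|v\|^2/(2\sig^2)$ from Lemma~\ref{lem:renyi-gaussians} bounds the supremum by $\tfrac{\alpha a^2}{2\sig^2}$. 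A final application of the data-processing inequality (Lemma~\ref{lem:renyi-dataprocess}) to the map $(u, w) \mapsto u + w$ gives $\Dalp{\rho}{\nu \ast \cN(0, \sig^2 I_d)} \leq \Dalp{(X_2, V+Z)}{(X', Z')}$, and sending $\eps \to 0$ closes the argument.
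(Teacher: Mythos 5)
Your proof is correct and follows essentially the same route as the paper's: exhibit a coupling $(X_1,X_2)$ realizing the $(z+a)$-shift, split the displacement into a piece of norm at most $z$ (left in the $W_\infty$-shift) and a piece $V$ of norm at most $a$ (absorbed into the Gaussian), then apply the sup-chain-rule bound (the paper's Lemma~\ref{lem:renyi-tensor-dependent}) together with the Gaussian R\'enyi identity. The only differences are presentational: the paper first proves the $z=0$ case and then reduces general $z$ to it via a decomposition $W=W_1+W_2$, whereas you do the general case in one pass, and you are more explicit than the paper in handling the fact that the conditional law of $V+Z$ given $X_2$ may be a \emph{mixture} of shifted Gaussians, which you correctly control via convexity of $P\mapsto \exp((\alpha-1)\,\Dalp{P}{Q})$ (a small terminological note: this is convexity of the exponentiated divergence in its first argument, not ``joint quasi-convexity,'' though both yield the needed bound).
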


Below we reproduce the proof of this lemma from~\citep{pabi} for completeness, although with a slightly modified presentation that emphasizes the connection to~\eqref{eq:sr-intuition}, as isolated in the following observation. Intuitively, this lemma can be thought of as a variant of the Chain Rule for the KL divergence, with expectation replaced by supremum so that the inequality extends from the KL divergence ($\alpha=1$) any R\'enyi divergence ($\alpha \geq 1$).

\begin{lemma}[Analog of~\eqref{eq:sr-intuition} used in Lemma~\ref{lem:sr}]
	\label{lem:renyi-tensor-dependent}
	Consider any random vectors $X,Y,X',Y'$ in $\R^d$. If $X'$ and $Y'$ are independent, then
	\[
		\Dalplr{X + Y}{X' + Y'}
		\leq
		\Dalplr{X}{X'}
		+
		\esssup_{x} \Dalplr{Y \big|_{X=x}} {Y'}
	\]
	where $Y\big|_{X=x}$ denotes the distribution of $Y$ conditional on $X=x$.
\end{lemma}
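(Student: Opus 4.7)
The plan is to establish this chain-rule-style bound in two steps: first reduce from the convolution on $\R^d$ to a statement about the joint laws via data processing, then establish a ``supremum chain rule'' for R\'enyi divergence on product spaces by writing out the density ratio and bounding one factor by an essential supremum.

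\textbf{Step 1: Data-processing reduction.} The map $h : \R^d \times \R^d \to \R^d$ given by $h(x,y) = x + y$ pushes the joint law of $(X,Y)$ to the law of $X+Y$, and similarly pushes the joint law of $(X',Y')$ to the law of $X' + Y'$. By the data-processing inequality for R\'enyi divergence (Lemma~\ref{lem:renyi-dataprocess}),
\[
	\Dalplr{X+Y}{X'+Y'} \leq \Dalplr{(X,Y)}{(X',Y')}.
\]
It therefore suffices to prove the inequality with the left-hand side replaced by $\Dalplr{(X,Y)}{(X',Y')}$.

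\textbf{Step 2: Supremum chain rule for the joint divergence.} For $\alpha \in (1,\infty)$, I would start from the definition of R\'enyi divergence. Because $X'$ and $Y'$ are independent, the joint density factors as $q(x,y) = q_{X'}(x) q_{Y'}(y)$, while the joint density of $(X,Y)$ factors as $p(x,y) = p_X(x) p_{Y|X=x}(y)$. Then
\[
	e^{(\alpha-1)\Dalp{(X,Y)}{(X',Y')}} = \int \!\!\int \left(\frac{p_X(x)}{q_{X'}(x)}\right)^{\!\alpha}\!\!\left(\frac{p_{Y|X=x}(y)}{q_{Y'}(y)}\right)^{\!\alpha}\!q_{X'}(x)\,q_{Y'}(y)\,dy\,dx.
\]
The inner $y$-integral is exactly $\exp\bigl((\alpha-1)\Dalp{Y|_{X=x}}{Y'}\bigr)$. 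Upper bounding it by its essential supremum over $x$ pulls the factor outside, leaving $\int (p_X/q_{X'})^\alpha q_{X'}\,dx = \exp\bigl((\alpha-1)\Dalp{X}{X'}\bigr)$. Taking $\tfrac{1}{\alpha-1}\log$ of both sides yields
\[
	\Dalplr{(X,Y)}{(X',Y')} \leq \Dalplr{X}{X'} + \esssup_x \Dalplr{Y|_{X=x}}{Y'},
\]
which combined with Step 1 proves the lemma for $\alpha \in (1,\infty)$. The endpoint cases $\alpha = 1$ and $\alpha = \infty$ follow by the standard continuity convention for R\'enyi divergence.

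\textbf{Expected obstacle.} The substantive calculation is routine once the factorization is written down; the main thing to handle carefully is the absolute continuity setup (so that the density ratios make sense and the essential supremum is with respect to the correct measure), and gracefully dealing with the case where either of the R\'enyi divergences on the right-hand side is infinite (in which case the bound is vacuous). Boundary values of $\alpha$ are handled by the standard continuity definition and, for $\alpha = 1$, recover the usual KL chain rule where the supremum can in fact be replaced by an expectation.
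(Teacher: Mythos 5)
Your proof is correct and follows the same route as the paper: apply the data-processing inequality to the map $(x,y)\mapsto x+y$ to pass to the joint laws, factor the density ratio using the independence of $(X',Y')$ and the conditional decomposition of $(X,Y)$, and then bound the inner $y$-integral by its essential supremum over $x$. The paper frames that last step as an application of H\"older's inequality, but it is precisely the $L^1$--$L^\infty$ case, i.e.\ the essential-supremum bound you use, so the two arguments are the same.
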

\begin{proof}
	By the data-processing property of the R\'enyi divergence, the definition of the R\'enyi divergence, the independence assumption, H\"older's inequality, and again the definition of R\'enyi divergence,
	\begin{align*}
		\Dalplr{X + Y}{X' + Y'}
		&\leq \Dalplr{(X,Y)}{(X',Y')}
		\\ &=
		\frac{1}{\alpha - 1} \log \iint \left( \frac{\Prob_{(X,Y)}(x,y) }{ \Prob_{(X',Y')}(x,y)} \right)^{\alpha} \Prob_{(X',Y')}(x,y) dx dy
		\\ &=
		\frac{1}{\alpha - 1} \log \iint \left( \frac{ \Prob_X(x) \Prob_{Y|X=x}(y) }{ \Prob_{X'}(x) \Prob_{Y'}(y) } \right)^{\alpha} \Prob_{X'}(x) \Prob_{Y'}(y) dx dy
		\\ &\leq
		\frac{1}{\alpha - 1} \log \int \left(  \frac{\Prob_{X}(x)}{\Prob_{X'}(x)} \right)^{\alpha}  \Prob_{X'}(x) dx \cdot \esssup_x \int \left(  \frac{\Prob_{Y|X=x}(y)}{\Prob_{Y'}(y)} \right)^{\alpha}  \Prob_{Y'}(y) dy
		\\ &=
		 \Dalplr{X}{X'}
		+
		\esssup_{x} \Dalplr{Y \big|_{X=x}} {Y'} 
	\end{align*}
\end{proof}

\begin{proof}[Proof of Lemma~\ref{lem:sr}]
	\underline{Case of $z=0$.} Let $\mu'$ be a distribution guaranteed by the definition of the shifted R\'enyi divergence $\Dalpshift{a}{\mu}{\nu}$; that is, $\mu'$ is a distribution satisfying $W_{\infty}(\mu,\mu') \leq a$ and $\Dalplr{\mu'}{\nu} = \Dalpshiftlr{a}{\mu}{\nu}$. Let $(U,U')$ be the coupling of $(\mu,\mu')$ guaranteed by the definition of $W_{\infty}(\mu,\mu')$; then $W := U - U'$ satisfies $\|W\| \leq a$ almost surely. 
	Now bound 
	\begin{align*}
		\Dalplr{\mu \ast \cN(0,\sig^2 I_d)}{ \nu \ast \cN(0,\sig^2 I_d)}
		&= 
		\Dalplr{\mu' \ast \cN(W,\sig^2 I_d)}{ \nu \ast \cN(0,\sig^2 I_d)}
		\\ &\leq 
		\Dalplr{\mu'}{\nu} + \sup_{w \; : \; \|w\| \leq  a} \Dalplr{\cN(w,\sig^2 I_d)}{ \cN(0,\sig^2 I_d)}
		\\ &=
		\Dalpshiftlr{a}{\mu}{\nu} + \frac{\alpha a^2}{2\sig^2} \,.
	\end{align*}
	Above, the first step is by adding and subtracting $W$, 
	the second step is by Lemma~\ref{lem:renyi-tensor-dependent}, and the third step is by construction of $\mu'$ and the
	identity $\Dalplr{\cN(w,\sig^2 I_d)}{\cN(0,\sig^2 I_d)} = \frac{\alpha \|w\|^2}{2\sig^2}$ which can be found in, e.g.,~\citep[Example 3]{van2014renyi}.

	\underline{Case of general $z \geq 0$.} Define $\mu', U, U', W$ analogously, now in terms of $\Dalpshift{z+a}{\mu}{\nu}$ rather than $\Dalpshift{a}{\mu}{\nu}$. Decompose $W = W_1  + W_2$ for $W_1 = W \cdot \min(1,z/\|W\|)$; this ensures $\|W_1\| \leq z$ and $\|W_2\| \leq a$. Observe that
	\begin{align*}
		\Dalpshiftlr{z}{\mu \ast \cN(0,\sig^2 I_d)}{\nu \ast \cN(0,\sig^2 I_d)}
		\leq
		\Dalplr{\mu \ast \Prob_{W_1} \ast \cN(0,\sig^2 I_d)}{\nu \ast \cN(0,\sig^2 I_d)}
		\leq
		\Dalpshiftlr{a}{\mu \ast \Prob_{W_1}}{\nu} + \frac{\alpha a^2}{2\sig^2}\,.
	\end{align*}
	Above, the first step is because $\|W_1\| \leq z$, 
	and the second step is by the case $z=0$ proved above. The proof is complete by observing that
	\begin{align*}
		\Dalpshiftlr{a}{\mu \ast \Prob_{W_1}}{\nu}
		\leq
		\Dalplr{\mu \ast \Prob_{W_1} \ast \Prob_{W_2}}{\nu}
		= 
		\Dalplr{\mu'}{\nu}
		= 
		\Dalpshiftlr{z+a}{\mu}{\nu}\,,
	\end{align*}
	where above the first step is by the bound $\|W_2\| \leq a$, the second step is by the decompositions $W = W_1 + W_2$ and $U' = U + W$, and the third step is by the bound $\|W\| \leq z+a$.
\end{proof}

\subsubsection{Proof of Diameter-Aware PABI Bound}

Finally, we recall how the combination of Lemmas~\ref{lem:cr} and~\ref{lem:sr} imply the diameter-aware PABI bound in Proposition~\ref{prop:pabi-new}. This argument is from~\citep{AltTal22dp} and is reproduced below for completeness, albeit here with the simplification that here the two CNI use the same contractions, since this suffices for sampling analyses.

\begin{proof}[Proof of Proposition~\ref{prop:pabi-new}]
	Consider any sequence $a_1, \dots, a_T \geq 0$ satisfying $D = \sum_{t=1}^T c^{-t} a_i$. Relate the shifted R\'enyi divergence at time $T$ to that at time $T-1$ by bounding
	\begin{align*}
		\Dalplr{X_T}{X_T'}
		&=
		\Dalpshiftlr{0}{\phi_T(X_{T-1})+Z_{T-1}}{\phi_T(X_{T-1}')+Z_{T-1}}
		\\ &\leq
		\Dalpshiftlr{a_T}{\phi_T(X_{T-1})}{\phi_T(X_{T-1}')} + \frac{\alpha a_T^2}{4\eta}
		\\ &\leq
		\Dalpshiftlr{ca_T}{X_{T-1}}{X_{T-1}'} + \frac{\alpha a_T^2}{2\sig^2}\,,
	\end{align*}
	where above, the first step is by definition of the CNI and the fact that the standard R\'enyi divergence is the same as $0$-shifted R\'enyi divergence; the second step is by the shift-reduction lemma (Lemma~\ref{lem:sr}); the third step is by the contraction-reduction lemma (Lemma~\ref{lem:cr}).
	\par By repeating this argument $T$ times, we can relate the shifted R\'enyi divergence at time $T$ to that at time $0$ by bounding
	\begin{align*}
		\Dalplr{X_T}{X_T'}
		\leq \Dalpshiftlr{\sum_{t=1}^T c^{-i} a_i }{X_0}{X_0'} + \frac{\alpha}{2 \sig^2}\sum_{t=1}^T a_T^2\,.
	\end{align*}
	Now this shifted R\'enyi divergence at time $0$ vanishes because $X_0,X_0'$ both lie in the set $\cK$ which has diameter $D = \sum_{t=1}^T c^{-t} a_t$. We conclude that
	\begin{align}
		\Dalplr{X_T}{X_T'}
		\leq \frac{\alpha}{2 \sig^2} \cdot \Bigg(\;\inf_{\substack{a_1, \dots, a_T \geq 0 \\ \text{s.t. } \sum_{t=1}^T c^{-t} a_t = D}} \sum_{t=1}^T a_T^2 \;\Bigg) \,.
		\label{eq:pabi-proof}
	\end{align}
	The proof is complete by setting $a_1 = \dots = a_t = D/T$ if $c = 1$, or setting $a_t = c^{-T}D \mathds{1}_{t  = T}$ if $c < 1$.
\end{proof}

\begin{remark}[PABI bound that is continuous at $c=1$]\label{rem:pabi-continuous}
	The statement of Proposition~\ref{prop:pabi-new} is discontinuous in $c$, as mentioned in the discussion preceding it. This is just for simplicity of the exposition. Indeed, by actually solving the optimization problem in~\eqref{eq:pabi-proof}, we obtain a slightly sharper bound that is continuous in $c$. Namely, by taking the optimal solution is $a_t = c^{-t} \beta D$ where $\beta = (c^2 - 1)/(1-c^{-2T})$, we have
		\begin{align}
		\Dalplr{X_T}{X_T'}
		\leq \frac{\alpha D^2}{2 \sig^2} \cdot \frac{c^2 - 1}{1-c^{-2T}}\,.
		\label{eq:pabi-bound}
	\end{align}
	This is continuous in $c$ because $\lim_{c \to 1} (c^2- 1)/(1-c^{-2T}) = 1/T$. However, we keep Proposition~\ref{prop:pabi-new} as stated because the exact bound~\eqref{eq:pabi-bound} is harder to parse and only yields a small constant improvement for $c < 1$.
\end{remark}

\subsection{Mixing in Other Notions of Distance}\label{app:otherdists}

\subsubsection{Preliminaries on Dobrushin's Coefficient and $f$-Divergences}\label{ssec:prelim:mixing}

	A simple, helpful lemma for proving fast mixing bounds for the total variation metric is the following. Operationally, it reduces proving mixing bounds to arbitrary error $\eps$ to just the case of constant $\eps$, say $1/4$. This holds for arbitrary Markov chains (not just the Langevin Algorithm), and is a straightforward consequence of the submultiplicativity of the function $t \mapsto \sup_{\mu,\nu} \|P^t \mu - P^t \nu\|_{\TV}$ for any Markov kernel $P$; a proof can be found in, e.g.,~\citep[\S 4.5]{peres2017}.
	
	\begin{lemma}[Mixing times are exponentially fast for total variation]\label{lem:tv-mixing-exponential}
		\[
		T_{\mathrm{mix},\,\TV}(\eps) \leq \lceil \log_2 \eps^{-1} \rceil \cdot T_{\mathrm{mix},\,\TV}(1/4) \,.
		\]
	\end{lemma}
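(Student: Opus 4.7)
The plan is to prove this via the standard contraction-coefficient argument, which in effect shows that the total variation distance to stationarity decays submultiplicatively. Let $P$ denote the Markov kernel of the Langevin Algorithm and write $\pieta$ for its stationary distribution. Define the $t$-step contraction coefficient
\[
\bar d(t) \;:=\; \sup_{\mu,\nu \in \cP(\R^d)} \|P^t \mu - P^t \nu\|_{\TV}.
\]
Then the proof reduces to two self-contained facts: (i) $\bar d$ is submultiplicative, i.e.\ $\bar d(s+t) \leq \bar d(s)\,\bar d(t)$; and (ii) at the constant-error mixing time, $\bar d$ is already below $1/2$.

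First I would establish submultiplicativity. This is cleanest via the coupling characterization of total variation: for any $\mu,\nu$, there is a coupling $(X,Y)$ with $\Pr[X\neq Y] = \|\mu-\nu\|_{\TV}$. Applying one step of $P^t$ jointly and keeping $X=Y$ when they were already equal, the pair $(P^t X, P^t Y)$ is a coupling of $(P^t \mu, P^t \nu)$ with unequal-probability at most $\Pr[X\neq Y]\cdot \bar d(t)$, which yields $\|P^t\mu-P^t\nu\|_{\TV}\leq \bar d(t)\|\mu-\nu\|_{\TV}$; taking $\mu = P^s \mu_0$, $\nu = P^s\nu_0$ and then a supremum gives $\bar d(s+t) \leq \bar d(s)\bar d(t)$. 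Next, write $T_0 := T_{\mathrm{mix},\TV}(1/4)$. By definition, $\|P^{T_0}\mu-\pieta\|_{\TV}\leq 1/4$ for every $\mu$, so by the triangle inequality and the stationarity identity $P^{T_0}\pieta = \pieta$,
\[
\|P^{T_0}\mu - P^{T_0}\nu\|_{\TV} \;\leq\; \|P^{T_0}\mu-\pieta\|_{\TV} + \|\pieta-P^{T_0}\nu\|_{\TV} \;\leq\; \tfrac12,
\]
hence $\bar d(T_0)\leq 1/2$.

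Combining the two facts, for $k := \lceil \log_2 \eps^{-1}\rceil$ iterations of $T_0$ steps one obtains $\bar d(k T_0) \leq \bar d(T_0)^k \leq 2^{-k} \leq \eps$, and in particular $\|P^{kT_0}\mu-\pieta\|_{\TV}\leq \bar d(kT_0)\leq \eps$ for every initialization $\mu$. Therefore $T_{\mathrm{mix},\TV}(\eps) \leq k T_0 = \lceil \log_2\eps^{-1}\rceil\cdot T_{\mathrm{mix},\TV}(1/4)$, as claimed. The only mildly delicate point is the submultiplicativity step, but once framed through the coupling characterization it is a one-line argument; everything else is the triangle inequality and the definition of the mixing time.
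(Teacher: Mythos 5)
Your proof is correct and is exactly the standard argument the paper defers to (Levin--Peres, \S 4.5): submultiplicativity of $\bar d(t)=\sup_{\mu,\nu}\|P^t\mu-P^t\nu\|_{\TV}$ via coupling, plus $\bar d(T_0)\leq 1/2$ at the constant-error mixing time. The paper itself does not reproduce this argument but simply cites it, so your write-up matches the intended proof.
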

	
	\par Although Lemma~\ref{lem:tv-mixing-exponential} and its proof are tailored to the total variation metric, this fact is helpful for proving rapid mixing bounds for other divergences. This is due to the following lemma from information theory which states that, among all $f$-divergences, the ``mixing rate'' is worst for the total variation distance. This mixing rate is quantified in terms of the so-called strong data processing constant $\rho_{\cD}(P) := \sup_{\mu \neq \nu} \cD (P\mu \| P\nu) / \cD (\mu \| \nu)$ for a Markov chain $P$ with respect to an $f$-divergence $\cD$, a.k.a., the Dobrushin contraction coefficient in the case of the total variation distance $\cD = \TV$.
	For further details on this lemma, see~\citep[\S11.1]{duchinotes}.

	\begin{lemma}[Total variation mixing is worst case among all $f$-divergences]\label{lem:tv-worst} For any $f$-divergence $\cD$ and any Markov chain $P$,
		\[
		\rho_{\cD}(P) \leq \rho_{\TV}(P)\,,
		\]
	\end{lemma}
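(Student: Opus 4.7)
The plan is to prove Lemma~\ref{lem:tv-worst} by exploiting the integral representation of $f$-divergences in terms of hockey-stick divergences together with Dobrushin's characterization $\rho_{\TV}(P) = \sup_{x, y} \TV(P_x, P_y)$, where $P_x := P(\cdot \mid x)$ denotes the conditional output distribution. The goal is to establish $D_f(P\mu \; \| \; P\nu) \leq \rho_{\TV}(P) \cdot D_f(\mu \; \| \; \nu)$ for every pair $\mu \neq \nu$ and then take suprema.

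First I would invoke the classical integral representation: for any convex $f$ with $f(1) = 0$, there exist nonnegative measures $\omega_f^\pm$ on $[1, \infty)$ and $(0, 1]$ respectively such that
\[
	D_f(\mu \; \| \; \nu) = \int_{[1, \infty)} E_s^+(\mu \; \| \; \nu) \, d\omega_f^+(s) + \int_{(0, 1]} E_s^-(\mu \; \| \; \nu) \, d\omega_f^-(s),
\]
where $E_s^+(\mu \; \| \; \nu) := \sup_A [\mu(A) - s\nu(A)]$ and $E_s^-(\mu \; \| \; \nu) := \sup_A [s\nu(A) - \mu(A)]$ are the two one-sided hockey-stick divergences. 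This comes from writing $r \mapsto f(r)$ as a nonnegative mixture of the basis convex functions $r \mapsto (r-s)_+$ for $s \geq 1$ and $r \mapsto (s-r)_+$ for $s \leq 1$, noting that any residual linear term $b(r - 1)$ contributes $0$ to the $f$-divergence since $\int (r-1) \, d\nu = 0$.

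Next, the key technical step will be to show that each hockey-stick divergence contracts under $P$ by at most $\rho_{\TV}(P)$; that is, $E_s^\pm(P\mu \; \| \; P\nu) \leq \rho_{\TV}(P) \cdot E_s^\pm(\mu \; \| \; \nu)$. For $E_s^+$ with $s \geq 1$, I would fix an output set $A$ and write
\[
(P\mu)(A) - s(P\nu)(A) = \int P(A \mid x) \, d(\mu - s\nu)(x) = c(1-s) + \int h(x) \, d(\mu - s\nu)(x),
\]
with $c := \inf_x P(A \mid x) \geq 0$ and $h(x) := P(A \mid x) - c \in [0, \rho_{\TV}(P)]$ by Dobrushin's bound on the oscillation of $P(A \mid \cdot)$. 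Since $s \geq 1$ and $c \geq 0$, the constant term $c(1-s) \leq 0$ can be dropped, and the Hahn decomposition $\mu - s\nu = \lambda_+ - \lambda_-$ combined with $h \geq 0$ yields $\int h \, d(\mu - s\nu) \leq \rho_{\TV}(P) \, \lambda_+(X) = \rho_{\TV}(P) \cdot E_s^+(\mu \; \| \; \nu)$. Taking the supremum over $A$ gives the claimed bound. The case $E_s^-$ with $s \leq 1$ is handled symmetrically, with the sign of $c(s-1) \leq 0$ again making the constant term vanish.

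Finally, integrating the per-level hockey-stick contraction against $\omega_f^\pm$ gives $D_f(P\mu \; \| \; P\nu) \leq \rho_{\TV}(P) \cdot D_f(\mu \; \| \; \nu)$, and $\sup_{\mu \neq \nu}$ on both sides concludes. The main obstacle is the second step above: a naive bound like $\abs{\int P(A \mid x) \, d(\mu - s\nu)} \leq \rho_{\TV}(P) \cdot (\lambda_+(X) + \lambda_-(X)) = \rho_{\TV}(P)(2 E_s^+ - (1-s))$ leaves an extra $(1-s)$ term that breaks clean contraction for $s \neq 1$. The trick of using the one-sided decomposition $P(A \mid x) = c + h(x)$ with $c = \inf_x P(A \mid x) \geq 0$ and $h \geq 0$---exploiting the sign of $(1-s)$ at the cost of separating the $s \geq 1$ and $s \leq 1$ cases through the two types of hockey-stick---is the crucial maneuver that makes the argument go through uniformly in $s$.
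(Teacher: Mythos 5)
Your proof is correct. The paper does not include its own argument for this lemma---it refers the reader to external lecture notes---so there is no in-paper proof to compare against; what you have written is a complete and valid proof, and it is the standard modern route to the strong data-processing bound $\rho_{\cD}(P)\leq\rho_{\TV}(P)$. Two pieces do the work: the integral representation of an $f$-divergence as a nonnegative mixture of hockey-stick divergences $E_s^{\pm}$ (with $d\omega_f^{\pm}$ the distributional second derivative of $f$, which is a nonnegative measure even when $f$ is not twice differentiable---worth saying explicitly since $\TV$, Hellinger, and several other $f$-divergences have kinked generators), and a per-level contraction estimate $E_s^{\pm}(P\mu\,\|\,P\nu)\leq\rho_{\TV}(P)\,E_s^{\pm}(\mu\,\|\,\nu)$. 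Your handling of the latter is the delicate part, and you have isolated the right maneuver: centering $P(A\mid\cdot)$ at its infimum rather than at a symmetric point so that the leftover constant term $c(1-s)$ (resp.\ $c(s-1)$) has the correct sign once $s\geq 1$ (resp.\ $s\leq 1$), which is exactly why the representation must separate $s\geq 1$ from $s\leq 1$. The remaining steps---oscillation of $P(A\mid\cdot)$ bounded by the Dobrushin coefficient, Hahn decomposition of $\mu-s\nu$ to identify $\lambda_+(X)=E_s^+(\mu\,\|\,\nu)$, dropping the $\int h\,d\lambda_-$ term using $h\geq 0$, then integrating in $s$ and taking $\sup_{\mu\neq\nu}$---are all correct as written.
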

	
	Lemma~\ref{lem:tv-worst} applies, for example, to the KL, Chi-Squared, and Hellinger divergences since those are all $f$-divergences. However, applying this lemma to the R\'enyi divergence requires an intermediate step since the R\'enyi divergence is not an $f$-divergence. This can be achieved by relating the R\'enyi divergence $\cD_{\alpha}$ to the generalized Hellinger divergence $\Hell_{\alpha}$, and recalling that the latter is an $f$-divergence. We record the necessary preliminaries for this connection in the following lemma; further details can be found in, e.g.,~\citep[equations (52) and (80)]{sason2016f}.
	
	\begin{lemma}[Preliminaries for using Lemma~\ref{lem:tv-worst} for R\'enyi divergence]\label{lem:tv-worst-renyi}
		Suppose $\alpha \in (1,\infty)$. Then $\Hell_{\alpha}$ is an $f$-divergence and satifies
		\[
		\cD_{\alpha}(\mu,\nu) = \frac{1}{\alpha-1}\log \left(  1 + (\alpha - 1) \Hell_{\alpha}(\mu,\nu) \right) 
		\,.
		\]
	\end{lemma}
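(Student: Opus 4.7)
The plan is to prove this lemma by direct computation from the standard definition of the Hellinger divergence of order $\alpha$ as an $f$-divergence, and then to obtain the claimed relation to R\'enyi divergence by recognizing a common ingredient inside both quantities. Specifically, I would take as the definition
\[
\Hell_{\alpha}(\mu,\nu) = \int f\!\left(\frac{d\mu}{d\nu}\right) d\nu = \frac{1}{\alpha-1}\left(\int \left(\frac{d\mu}{d\nu}\right)^{\alpha}\! d\nu \;-\; 1\right),
\]
with the generator $f(t) = (t^{\alpha}-1)/(\alpha-1)$.

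First, I would verify that $\Hell_{\alpha}$ is an $f$-divergence, which just amounts to checking that the generator $f$ is convex on $[0,\infty)$ and satisfies $f(1)=0$. Convexity of $t\mapsto t^{\alpha}$ for $\alpha>1$ is elementary (its second derivative $\alpha(\alpha-1)t^{\alpha-2}$ is nonnegative on $[0,\infty)$), and dividing by the positive constant $\alpha-1$ preserves convexity; the normalization $f(1)=0$ is immediate from the definition. This takes care of the first assertion.

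Next, I would derive the identity by introducing the shorthand $I_{\alpha}:=\int (d\mu/d\nu)^{\alpha}\, d\nu$, so that the two quantities of interest take the parallel forms
\[
\Hell_{\alpha}(\mu,\nu) = \frac{I_{\alpha}-1}{\alpha-1}, \qquad \cD_{\alpha}(\mu\,\|\,\nu) = \frac{\log I_{\alpha}}{\alpha-1}.
\]
Solving the first equation for $I_{\alpha}$ gives $I_{\alpha} = 1 + (\alpha-1)\Hell_{\alpha}(\mu,\nu)$, and substituting into the second equation yields
\[
\cD_{\alpha}(\mu\,\|\,\nu) \;=\; \frac{1}{\alpha-1}\log\!\left(1 + (\alpha-1)\Hell_{\alpha}(\mu,\nu)\right),
\]
which is exactly the claimed relation. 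The whole argument is essentially bookkeeping, and the only conceptual obstacle is being careful about conventions, since different sources normalize the generalized Hellinger divergence by different constants; once the generator $f$ is fixed as above, everything is a one-line algebraic manipulation, and there is no substantive mathematical difficulty.
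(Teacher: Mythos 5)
Your proof is correct. The paper itself does not prove this lemma but defers to a citation (\citep[equations (52) and (80)]{sason2016f}); your computation from the generator $f(t) = (t^{\alpha}-1)/(\alpha-1)$ is exactly the natural verification, and your remark about normalization conventions is the one genuine subtlety---the identity is sensitive to whether $\Hell_\alpha$ carries the $1/(\alpha-1)$ prefactor, and you have chosen the convention consistent with the claimed formula.
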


\subsubsection{Proof of Corollary~\ref{cor:convex-mixing}}
	By a basic mixing time argument (Lemma~\ref{lem:tv-mixing-exponential}) which applies to any Markov chain, we can boost the total variation mixing time proved in Theorem~\ref{thm:convex-ub} for constant error $1/4$ to arbitrarily small error $\eps > 0$, namely
	\[
	T_{\mathrm{mix},\,\TV}(\eps) \leq \frac{2D^2}{\eta} \lceil \log_2(1/\eps) \rceil
	\,.
	\]

	\par We now show how this fast mixing bound for total variation implies fast mixing for the R\'enyi divergence $\cD_{\alpha}$ for $\alpha \in [1,\infty)$. 
	We do this by
	proving fast mixing for the more stringent notion of the Hellinger divergence $\Hell_{\alpha}$. Our analysis proceeds in two phases. First, we argue about the number of iterations it takes to mix to error $2e^{\alpha - 1}$ in Hellinger divergence. To this end, observe that $T_{\mathrm{mix},\,D_{\alpha}}(1) \lesssim \alpha D^2/\eta$
	by~\eqref{eq:convex-proof-constant}; and also observe that if the R\'enyi divergence $\cD_{\alpha}$ between two distributions is at most $1$, then the Hellinger divergence $\Hell_{\alpha}$ is at most $ 2e^{\alpha - 1}$ due to the identity in Lemma~\ref{lem:tv-worst-renyi} combined with a crude bound that uses the inequality $e^{\alpha - 1} \leq 1+2(\alpha - 1)$ for $\alpha \in [1,2]$. Together, these two observations imply
	\[
	T_{\mathrm{mix},\Hell_{\alpha}}\left( 2e^{\alpha - 1} \right)
	\leq
	T_{\mathrm{mix},\,D_{\alpha}}(1)
	\lesssim 
	\frac{\alpha D^2}{\eta}\,.
	\]
	Next, we claim that it takes at most
	\[
	\frac{D^2}{\eta} \log  \frac{2e^{\alpha - 1}}{\eps}\lesssim \frac{D^2}{\eta} \left[ (\alpha - 1) + \log \frac{1}{\eps} \right]\,.
	\]
	iterations to mix to $\eps$ error in Hellinger divergence $\Hell_{\alpha}$ after we have reached error $2e^{\alpha - 1}$. This is a consequence of two observations: (i) the Dobrushin contraction coefficient corresponding to $D^2/\eta$ iterations of the Langevin Algorithm is bounded above by $1/4$ by~\eqref{eq:convex-proof-TV-partial}; and (ii) the analogous contraction coefficient for the Hellinger divergence $\Hell_{\alpha}$ is no worse by Lemma~\ref{lem:tv-worst} and the fact that $\Hell_{\alpha}$ is an $f$-divergence (Lemma~\ref{lem:tv-worst-renyi}). By combining the above two displays, we conclude that
	\[
	T_{\mathrm{mix},\,\Hell_{\alpha}}(\eps)
	\lesssim
	\frac{D^2}{\eta} \left[ (\alpha-1) + \log \frac{1}{\eps} \right]\,.
	\]
	Now, because the R\'enyi divergence $\cD_{\alpha}$ is always bounded above by the Hellinger divergence $\Hell_{\alpha}$ (this follows from Lemma~\ref{lem:tv-worst-renyi} and the elementary inequality $\log(1+x) \leq x$), the above display implies the claimed mixing time bound for the R\'enyi divergence. This immediately implies the claimed fast mixing bounds in the other divergences (KL, Hellinger, and Chi-Squared) due to the standard comparison inequalities in Lemma~\ref{lem:renyi-ineq}.

\subsection{Extension to Unconstrained Settings}\label{app:diam}

Here, we briefly mention how our analysis technique readily extends unchanged to unconstrained settings, i.e., if $\cK = \R^d$, or equivalently the Langevin Algorithm makes no projection in its update. This leads to tight mixing bounds and only requires two simple changes in the analysis.

\par First, we must slightly restrict the initialization, since otherwise the Langevin Algorithm takes arbitrarily long to even come close to where $\pieta$ is concentrated. To remedy this, simply initialize at any mode $x^*$ of $\pi$, a.k.a., any minimizer of the potential $f$. Such an $x^*$ is readily computed using Gradient Descent; note that this uses the same first-order access to gradients of $f$ as the Langevin Algorithm. 

\par Second, the diameter $D$ is replaced by a proxy $D_{\eta,\eps}$ which is the diameter of a ball that captures all but $\eps$ mass of $\pieta$. Formally, let $D_{\eta,\eps}$ satisfy
\begin{align}
	\Prob_{X \sim \pieta} \left[ \|X - x^*\| \geq D_{\eta,\eps} \right] \leq \eps\,.\label{eq:diam-Dproxy}
\end{align}
for any minimizer $x^*$ of $f$. Bounds on $D_{\eta,\eps}$ for both the convex and strongly convex settings are in~\citep{AltTal22concentration}. A bound for the strongly convex setting can also be obtained by Theorem 8 of the recent revision of~\citep{VempalaW19}.


\par With these two minor changes, the proofs of our $\TV$ mixing time upper bounds for convex potentials (Theorem~\ref{thm:convex-ub}) and strongly convex potentials (Theorem~\ref{thm:sc-ub}) carry over to the present unconstrained setting, establishing that for the same mixing time $T$ (except with $D$ replaced by $D_{\eta,\eps}$), we have $\TV(X_T, \pi_{\eta,\eps}) \leq \eps$, where $\pi_{\eta,\eps}$ is the truncation of $\pi_{\eta}$ to the ball of radius $D_{\eta,\eps}$; that is, $\pi_{\eta,\eps}(x) \propto \pieta(x) \cdot \mathds{1}[\|x\| \leq D_{\eta,\eps}]$. By definition of $\pi_{\eta,\eps}$, we have $\TV(\pieta,\pi_{\eta,\eps}) \leq 2 \eps$. Thus $\TV(X_T, \pieta) \leq 3\eps$ by the triangle inequality, so we conclude that in this unconstrained setting, $T$ iterations suffices for the Langevin Algorithm initialized at $x^*$ to converge to $\TV$ distance $3\eps$ from $\pieta$.

\par This leads to tight mixing bounds. Indeed, for the case of strongly convex potentials, our mixing lower bound matches up to a mild logarithmic term, see the discussion in \S\ref{sec:sc}. And for the case of convex potentials, the reachability lower bound argument in \S\ref{ssec:convex-lb} applies to the unconstrained setting here: it takes roughly $D_{\eta,\eps}^2/\eta$ iterations for the Langevin Algorithm to move a distance of $D_{\eta,\eps}$ and thus to reach all parts of the ball of diameter $D_{\eta,\eps}$, and this is a pre-requisite for mixing to $\Theta(\eps)$ error in $\TV$.

	\footnotesize
	\addcontentsline{toc}{section}{References}
	\bibliographystyle{plainnat}
	\bibliography{sampling_pabi}{}

\begin{thebibliography}{93}
\providecommand{\natexlab}[1]{#1}
\providecommand{\url}[1]{\texttt{#1}}
\expandafter\ifx\csname urlstyle\endcsname\relax
  \providecommand{\doi}[1]{doi: #1}\else
  \providecommand{\doi}{doi: \begingroup \urlstyle{rm}\Url}\fi

\bibitem[Ahn and Chewi(2021)]{ahn2021efficient}
Kwangjun Ahn and Sinho Chewi.
\newblock Efficient constrained sampling via the mirror-{L}angevin algorithm.
\newblock \emph{Advances in Neural Information Processing Systems},
  34:\penalty0 28405--28418, 2021.

\bibitem[Altschuler and Talwar(2022{\natexlab{a}})]{AltTal22concentration}
Jason~M Altschuler and Kunal Talwar.
\newblock Concentration of the {L}angevin {A}lgorithm's stationary
  distribution.
\newblock \emph{Forthcoming}, 2022{\natexlab{a}}.

\bibitem[Altschuler and Talwar(2022{\natexlab{b}})]{AltTal22dp}
Jason~M Altschuler and Kunal Talwar.
\newblock Privacy of {N}oisy {S}tochastic {G}radient {D}escent: More iterations
  without more privacy loss.
\newblock In \emph{Advances in Neural Information Processing Systems},
  2022{\natexlab{b}}.

\bibitem[Andrieu et~al.(2003)Andrieu, De~Freitas, Doucet, and
  Jordan]{andrieu2003introduction}
Christophe Andrieu, Nando De~Freitas, Arnaud Doucet, and Michael~I Jordan.
\newblock An introduction to {MCMC} for machine learning.
\newblock \emph{Machine Learning}, 50\penalty0 (1):\penalty0 5--43, 2003.

\bibitem[Asoodeh et~al.(2020)Asoodeh, D{\'i}az, and du~Pin~Calmon]{Asoodeh20}
Shahab Asoodeh, Mario D{\'i}az, and Fl{\'a}vio du~Pin~Calmon.
\newblock Privacy amplification of iterative algorithms via contraction
  coefficients.
\newblock \emph{2020 IEEE International Symposium on Information Theory}, pages
  896--901, 2020.

\bibitem[Balasubramanian et~al.(2022)Balasubramanian, Chewi, Erdogdu, Salim,
  and Zhang]{balasubramanian2022towards}
Krishna Balasubramanian, Sinho Chewi, Murat~A Erdogdu, Adil Salim, and Shunshi
  Zhang.
\newblock Towards a theory of non-log-concave sampling: first-order
  stationarity guarantees for {L}angevin {M}onte {C}arlo.
\newblock In \emph{Conference on Learning Theory}, pages 2896--2923. PMLR,
  2022.

\bibitem[Balle et~al.(2019)Balle, Barthe, Gaboardi, and Geumlek]{BalleBGG19}
Borja Balle, Gilles Barthe, Marco Gaboardi, and Joseph Geumlek.
\newblock \emph{Privacy Amplification by Mixing and Diffusion Mechanisms}.
\newblock Curran Associates Inc., Red Hook, NY, USA, 2019.

\bibitem[B{\'e}lisle et~al.(1993)B{\'e}lisle, Romeijn, and
  Smith]{belisle1993hit}
Claude~JP B{\'e}lisle, H~Edwin Romeijn, and Robert~L Smith.
\newblock Hit-and-run algorithms for generating multivariate distributions.
\newblock \emph{Mathematics of Operations Research}, 18\penalty0 (2):\penalty0
  255--266, 1993.

\bibitem[Bernton(2018)]{bernton2018langevin}
Espen Bernton.
\newblock Langevin {M}onte {C}arlo and {JKO} splitting.
\newblock In \emph{Conference On Learning Theory}, pages 1777--1798. PMLR,
  2018.

\bibitem[Bhattacharya(1978)]{bhattacharya1978criteria}
RN~Bhattacharya.
\newblock Criteria for recurrence and existence of invariant measures for
  multidimensional diffusions.
\newblock \emph{The Annals of Probability}, pages 541--553, 1978.

\bibitem[Bishop and Nasrabadi(2006)]{bishop2006pattern}
Christopher~M Bishop and Nasser~M Nasrabadi.
\newblock \emph{Pattern recognition and machine learning}, volume~4.
\newblock Springer, 2006.

\bibitem[Brosse et~al.(2017)Brosse, Durmus, Moulines, and
  Pereyra]{brosse2017sampling}
Nicolas Brosse, Alain Durmus, {\'E}ric Moulines, and Marcelo Pereyra.
\newblock Sampling from a log-concave distribution with compact support with
  proximal {L}angevin {M}onte {C}arlo.
\newblock In \emph{Conference on Learning Theory}, pages 319--342. PMLR, 2017.

\bibitem[Bubeck(2015)]{bubeck2015convex}
S{\'e}bastien Bubeck.
\newblock Convex optimization: Algorithms and complexity.
\newblock \emph{Foundations and Trends in Machine Learning}, 8\penalty0
  (3-4):\penalty0 231--357, 2015.

\bibitem[Bubeck et~al.(2018)Bubeck, Eldan, and Lehec]{bubeck2018sampling}
S{\'e}bastien Bubeck, Ronen Eldan, and Joseph Lehec.
\newblock Sampling from a log-concave distribution with {P}rojected {L}angevin
  {M}onte {C}arlo.
\newblock \emph{Discrete \& Computational Geometry}, 59\penalty0 (4):\penalty0
  757--783, 2018.

\bibitem[Cao et~al.(2020)Cao, Lu, and Wang]{cao2020complexity}
Yu~Cao, Jianfeng Lu, and Lihan Wang.
\newblock Complexity of randomized algorithms for underdamped {L}angevin
  dynamics.
\newblock \emph{arXiv preprint arXiv:2003.09906}, 2020.

\bibitem[Chatterji et~al.(2020)Chatterji, Diakonikolas, Jordan, and
  Bartlett]{chatterji2020langevin}
Niladri Chatterji, Jelena Diakonikolas, Michael~I Jordan, and Peter Bartlett.
\newblock Langevin {M}onte {C}arlo without smoothness.
\newblock In \emph{International Conference on Artificial Intelligence and
  Statistics}, pages 1716--1726. PMLR, 2020.

\bibitem[Cheng et~al.(2018{\natexlab{a}})Cheng, Chatterji, Abbasi-Yadkori,
  Bartlett, and Jordan]{cheng2018sharp}
Xiang Cheng, Niladri~S Chatterji, Yasin Abbasi-Yadkori, Peter~L Bartlett, and
  Michael~I Jordan.
\newblock Sharp convergence rates for {L}angevin dynamics in the nonconvex
  setting.
\newblock \emph{arXiv preprint arXiv:1805.01648}, 2018{\natexlab{a}}.

\bibitem[Cheng et~al.(2018{\natexlab{b}})Cheng, Chatterji, Bartlett, and
  Jordan]{cheng2018underdamped}
Xiang Cheng, Niladri~S Chatterji, Peter~L Bartlett, and Michael~I Jordan.
\newblock Underdamped {La}ngevin {MCMC}: A non-asymptotic analysis.
\newblock In \emph{Conference on Learning Theory}, pages 300--323. PMLR,
  2018{\natexlab{b}}.

\bibitem[Cheng et~al.(2022)Cheng, Zhang, and Sra]{cheng2022theory}
Xiang Cheng, Jingzhao Zhang, and Suvrit Sra.
\newblock Theory and algorithms for diffusion processes on {R}iemannian
  {M}anifolds.
\newblock \emph{arXiv preprint arXiv:2204.13665}, 2022.

\bibitem[Chewi(2022)]{chewibook}
Sinho Chewi.
\newblock \emph{Log-Concave Sampling}.
\newblock Forthcoming, 2022.

\bibitem[Chewi et~al.(2021{\natexlab{a}})Chewi, Erdogdu, Li, Shen, and
  Zhang]{chewi2021analysis}
Sinho Chewi, Murat~A Erdogdu, Mufan~Bill Li, Ruoqi Shen, and Matthew Zhang.
\newblock Analysis of {L}angevin {M}onte {C}arlo from {P}oincar\'e to
  {L}og-{S}obolev.
\newblock \emph{arXiv preprint arXiv:2112.12662}, 2021{\natexlab{a}}.

\bibitem[Chewi et~al.(2021{\natexlab{b}})Chewi, Lu, Ahn, Cheng, Le~Gouic, and
  Rigollet]{chewi2021optimal}
Sinho Chewi, Chen Lu, Kwangjun Ahn, Xiang Cheng, Thibaut Le~Gouic, and Philippe
  Rigollet.
\newblock Optimal dimension dependence of the {M}etropolis-{A}djusted
  {L}angevin {A}lgorithm.
\newblock In \emph{Conference on Learning Theory}, pages 1260--1300. PMLR,
  2021{\natexlab{b}}.

\bibitem[Chewi et~al.(2022{\natexlab{a}})Chewi, Gerber, Lee, and
  Lu]{chewi2022fisher}
Sinho Chewi, Patrik Gerber, Holden Lee, and Chen Lu.
\newblock Fisher information lower bounds for sampling.
\newblock \emph{arXiv preprint arXiv:2210.02482}, 2022{\natexlab{a}}.

\bibitem[Chewi et~al.(2022{\natexlab{b}})Chewi, Gerber, Lu, Le~Gouic, and
  Rigollet]{chewi2022query}
Sinho Chewi, Patrik~R Gerber, Chen Lu, Thibaut Le~Gouic, and Philippe Rigollet.
\newblock The query complexity of sampling from strongly log-concave
  distributions in one dimension.
\newblock In \emph{Conference on Learning Theory}, pages 2041--2059. PMLR,
  2022{\natexlab{b}}.

\bibitem[Chourasia et~al.(2021)Chourasia, Ye, and Shokri]{ChourasiaYS21}
Rishav Chourasia, Jiayuan Ye, and Reza Shokri.
\newblock Differential privacy dynamics of {L}angevin diffusion and noisy
  gradient descent.
\newblock In \emph{Advances in Neural Information Processing Systems},
  volume~34, pages 14771--14781, 2021.

\bibitem[Dalalyan(2017{\natexlab{a}})]{Dalalyan16}
Arnak Dalalyan.
\newblock Theoretical guarantees for approximate sampling from smooth and
  log-concave densities.
\newblock \emph{Journal of the Royal Statistical Society: Series B (Statistical
  Methodology)}, 79, 6 2017{\natexlab{a}}.

\bibitem[Dalalyan(2017{\natexlab{b}})]{Dalalyan2017further}
Arnak Dalalyan.
\newblock Further and stronger analogy between sampling and optimization:
  {L}angevin {M}onte {C}arlo and gradient descent.
\newblock In \emph{Conference on Learning Theory}, pages 678--689. PMLR,
  2017{\natexlab{b}}.

\bibitem[Dalalyan et~al.(2019)Dalalyan, Karagulyan, and
  Riou-Durand]{dalalyan2019bounding}
Arnak~S Dalalyan, Avetik Karagulyan, and Lionel Riou-Durand.
\newblock Bounding the error of discretized {L}angevin algorithms for
  non-strongly log-concave targets.
\newblock \emph{arXiv preprint arXiv:1906.08530}, 2019.

\bibitem[Duane et~al.(1987)Duane, Kennedy, Pendleton, and
  Roweth]{duane1987hybrid}
Simon Duane, Anthony~D Kennedy, Brian~J Pendleton, and Duncan Roweth.
\newblock Hybrid {M}onte {C}arlo.
\newblock \emph{Physics letters B}, 195\penalty0 (2):\penalty0 216--222, 1987.

\bibitem[Duchi(2021)]{duchinotes}
John Duchi.
\newblock Lecture notes for statistics 311/electrical engineering 377.
\newblock \emph{https://web.stanford.edu/class/stats311/lecture-notes.pdf},
  page 324, 2021.

\bibitem[Durmus and Moulines(2016)]{durmus2016sampling}
Alain Durmus and Eric Moulines.
\newblock Sampling from strongly log-concave distributions with the
  {U}nadjusted {L}angevin {A}lgorithm.
\newblock \emph{arXiv preprint arXiv:1605.01559}, 2016.

\bibitem[Durmus and Moulines(2017)]{durmus2017nonasymptotic}
Alain Durmus and Eric Moulines.
\newblock Nonasymptotic convergence analysis for the {U}nadjusted {L}angevin
  {A}lgorithm.
\newblock \emph{The Annals of Applied Probability}, 27\penalty0 (3):\penalty0
  1551--1587, 2017.

\bibitem[Durmus and Moulines(2019)]{DurmusM19}
Alain Durmus and Eric Moulines.
\newblock High-dimensional {B}ayesian inference via the {U}nadjusted {L}angevin
  {A}lgorithm.
\newblock \emph{Bernoulli}, 25:\penalty0 2854--2882, 11 2019.

\bibitem[Durmus et~al.(2018)Durmus, Moulines, and Pereyra]{durmus2018efficient}
Alain Durmus, Eric Moulines, and Marcelo Pereyra.
\newblock Efficient {B}ayesian computation by {P}roximal {M}arkov {C}hain
  {M}onte {C}arlo: when {L}angevin meets {M}oreau.
\newblock \emph{SIAM Journal on Imaging Sciences}, 11\penalty0 (1):\penalty0
  473--506, 2018.

\bibitem[Durmus et~al.(2019)Durmus, Majewski, and
  Miasojedow]{durmus2019analysis}
Alain Durmus, Szymon Majewski, and B{\l}a{\.z}ej Miasojedow.
\newblock Analysis of {L}angevin {M}onte {C}arlo via convex optimization.
\newblock \emph{The Journal of Machine Learning Research}, 20\penalty0
  (1):\penalty0 2666--2711, 2019.

\bibitem[Dwivedi et~al.(2018)Dwivedi, Chen, Wainwright, and Yu]{dwivedi2018log}
Raaz Dwivedi, Yuansi Chen, Martin~J Wainwright, and Bin Yu.
\newblock Log-concave sampling: {M}etropolis-{H}astings algorithms are fast!
\newblock In \emph{Conference on learning theory}, pages 793--797. PMLR, 2018.

\bibitem[Dwork and Roth(2014)]{DworkRo14}
Cynthia Dwork and Aaron Roth.
\newblock The algorithmic foundations of differential privacy.
\newblock \emph{Foundations and Trends in Theoretical Computer Science},
  9\penalty0 (3 \& 4):\penalty0 211--407, 2014.

\bibitem[Dyer et~al.(1991)Dyer, Frieze, and Kannan]{dyer1991random}
Martin Dyer, Alan Frieze, and Ravi Kannan.
\newblock A random polynomial-time algorithm for approximating the volume of
  convex bodies.
\newblock \emph{Journal of the ACM}, 38\penalty0 (1):\penalty0 1--17, 1991.

\bibitem[Eberle et~al.(2019)Eberle, Guillin, and Zimmer]{eberle2019couplings}
Andreas Eberle, Arnaud Guillin, and Raphael Zimmer.
\newblock Couplings and quantitative contraction rates for {L}angevin dynamics.
\newblock \emph{The Annals of Probability}, 47\penalty0 (4):\penalty0
  1982--2010, 2019.

\bibitem[Erdogdu and Hosseinzadeh(2021)]{erdogdu2021convergence}
Murat~A Erdogdu and Rasa Hosseinzadeh.
\newblock On the convergence of {L}angevin {M}onte {C}arlo: The interplay
  between tail growth and smoothness.
\newblock In \emph{Conference on Learning Theory}, pages 1776--1822. PMLR,
  2021.

\bibitem[Erdogdu et~al.(2018)Erdogdu, Mackey, and Shamir]{erdogdu2018global}
Murat~A Erdogdu, Lester Mackey, and Ohad Shamir.
\newblock Global non-convex optimization with discretized diffusions.
\newblock \emph{Advances in Neural Information Processing Systems}, 31, 2018.

\bibitem[Erdogdu et~al.(2022)Erdogdu, Hosseinzadeh, and
  Zhang]{erdogdu2022convergence}
Murat~A Erdogdu, Rasa Hosseinzadeh, and Shunshi Zhang.
\newblock Convergence of {L}angevin {M}onte {C}arlo in chi-squared and
  {R}{\'e}nyi divergence.
\newblock In \emph{International Conference on Artificial Intelligence and
  Statistics}, pages 8151--8175. PMLR, 2022.

\bibitem[Feldman et~al.(2018)Feldman, Mironov, Talwar, and Thakurta]{pabi}
Vitaly Feldman, Ilya Mironov, Kunal Talwar, and Abhradeep Thakurta.
\newblock Privacy amplification by iteration.
\newblock In \emph{Symposium on Foundations of Computer Science}, pages
  521--532. IEEE, 2018.

\bibitem[Feldman et~al.(2020)Feldman, Koren, and Talwar]{FeldmanKoTa20}
Vitaly Feldman, Tomer Koren, and Kunal Talwar.
\newblock Private stochastic convex optimization: optimal rates in linear time.
\newblock In \emph{Symposium on the Theory of Computing}, pages 439--449, 2020.

\bibitem[Ganesh and Talwar(2020)]{GaneshT20}
Arun Ganesh and Kunal Talwar.
\newblock Faster differentially private samplers via {R}\'{e}nyi divergence
  analysis of discretized {L}angevin {MCMC}.
\newblock In \emph{Advances in Neural Information Processing Systems},
  volume~33, pages 7222--7233, 2020.

\bibitem[Gatmiry and Vempala(2022)]{gatmiry2022convergence}
Khashayar Gatmiry and Santosh~S Vempala.
\newblock Convergence of the {R}iemannian {L}angevin {A}lgorithm.
\newblock \emph{arXiv preprint arXiv:2204.10818}, 2022.

\bibitem[Girolami and Calderhead(2011)]{girolami2011riemann}
Mark Girolami and Ben Calderhead.
\newblock Riemann manifold {L}angevin and {H}amiltonian {M}onte {C}arlo
  methods.
\newblock \emph{Journal of the Royal Statistical Society: Series B (Statistical
  Methodology)}, 73\penalty0 (2):\penalty0 123--214, 2011.

\bibitem[Hoffman et~al.(2014)Hoffman, Gelman, et~al.]{hoffman2014no}
Matthew~D Hoffman, Andrew Gelman, et~al.
\newblock The {N}o-{U}-{T}urn sampler: adaptively setting path lengths in
  {H}amiltonian {M}onte {C}arlo.
\newblock \emph{J. Mach. Learn. Res.}, 15\penalty0 (1):\penalty0 1593--1623,
  2014.

\bibitem[Hsieh et~al.(2018)Hsieh, Kavis, Rolland, and
  Cevher]{hsieh2018mirrored}
Ya-Ping Hsieh, Ali Kavis, Paul Rolland, and Volkan Cevher.
\newblock Mirrored {L}angevin dynamics.
\newblock \emph{Advances in Neural Information Processing Systems}, 31, 2018.

\bibitem[James(1980)]{james1980monte}
Frederick James.
\newblock Monte {C}arlo theory and practice.
\newblock \emph{Reports on Progress in Physics}, 43\penalty0 (9):\penalty0
  1145, 1980.

\bibitem[Jerrum and Sinclair(1996)]{jerrum1996markov}
Mark Jerrum and Alistair Sinclair.
\newblock The {M}arkov chain {M}onte {C}arlo method: an approach to approximate
  counting and integration.
\newblock \emph{Approximation Algorithms for NP-hard problems, PWS Publishing},
  1996.

\bibitem[Jordan et~al.(1998)Jordan, Kinderlehrer, and
  Otto]{jordan1998variational}
Richard Jordan, David Kinderlehrer, and Felix Otto.
\newblock The variational formulation of the {F}okker--{P}lanck equation.
\newblock \emph{SIAM Journal on Mathematical Analysis}, 29\penalty0
  (1):\penalty0 1--17, 1998.

\bibitem[Kannan et~al.(1995)Kannan, Lov{\'a}sz, and
  Simonovits]{kannan1995isoperimetric}
Ravi Kannan, L{\'a}szl{\'o} Lov{\'a}sz, and Mikl{\'o}s Simonovits.
\newblock Isoperimetric problems for convex bodies and a localization lemma.
\newblock \emph{Discrete \& Computational Geometry}, 13\penalty0 (3):\penalty0
  541--559, 1995.

\bibitem[Kook et~al.(2022)Kook, Lee, Shen, and Vempala]{kook2022sampling}
Yunbum Kook, Yin~Tat Lee, Ruoqi Shen, and Santosh~S Vempala.
\newblock Sampling with {R}iemannian {H}amiltonian {M}onte {C}arlo in a
  constrained space.
\newblock \emph{arXiv preprint arXiv:2202.01908}, 2022.

\bibitem[Lee and Vempala(2018)]{lee2018convergence}
Yin~Tat Lee and Santosh~S Vempala.
\newblock Convergence rate of {R}iemannian {H}amiltonian {M}onte {C}arlo and
  faster polytope volume computation.
\newblock In \emph{Proceedings of the 50th Annual ACM SIGACT Symposium on
  Theory of Computing}, pages 1115--1121, 2018.

\bibitem[Lee et~al.(2021)Lee, Shen, and Tian]{lee2021lower}
Yin~Tat Lee, Ruoqi Shen, and Kevin Tian.
\newblock Lower bounds on {M}etropolized sampling methods for well-conditioned
  distributions.
\newblock \emph{Advances in Neural Information Processing Systems},
  34:\penalty0 18812--18824, 2021.

\bibitem[Lehec(2021)]{lehec2021langevin}
Joseph Lehec.
\newblock The {L}angevin {M}onte {C}arlo algorithm in the non-smooth
  log-concave case.
\newblock \emph{arXiv preprint arXiv:2101.10695}, 2021.

\bibitem[Levin and Peres(2017)]{peres2017}
David~A Levin and Yuval Peres.
\newblock \emph{Markov chains and mixing times}, volume 107.
\newblock American Mathematical Society, 2017.

\bibitem[Li and Erdogdu(2020)]{li2020riemannian}
Mufan~Bill Li and Murat~A Erdogdu.
\newblock Riemannian {L}angevin algorithm for solving semidefinite programs.
\newblock \emph{arXiv preprint arXiv:2010.11176}, 2020.

\bibitem[Liang and Chen(2021)]{liang2021proximal}
Jiaming Liang and Yongxin Chen.
\newblock A proximal algorithm for sampling from non-smooth potentials.
\newblock \emph{arXiv preprint arXiv:2110.04597}, 2021.

\bibitem[Liu and Liu(2001)]{liu2001monte}
Jun~S Liu and Jun~S Liu.
\newblock \emph{Monte {C}arlo strategies in scientific computing}, volume~10.
\newblock Springer, 2001.

\bibitem[Lov{\'a}sz(1999)]{lovasz1999hit}
L{\'a}szl{\'o} Lov{\'a}sz.
\newblock Hit-and-run mixes fast.
\newblock \emph{Mathematical programming}, 86\penalty0 (3):\penalty0 443--461,
  1999.

\bibitem[Lov{\'a}sz and Simonovits(1990)]{lovasz1990mixing}
L{\'a}szl{\'o} Lov{\'a}sz and Mikl{\'o}s Simonovits.
\newblock The mixing rate of {M}arkov chains, an isoperimetric inequality, and
  computing the volume.
\newblock In \emph{Symposium on Foundations of Computer Science}, pages
  346--354. IEEE, 1990.

\bibitem[Lov{\'a}sz and Simonovits(1993)]{lovasz1993random}
L{\'a}szl{\'o} Lov{\'a}sz and Mikl{\'o}s Simonovits.
\newblock Random walks in a convex body and an improved volume algorithm.
\newblock \emph{Random Structures \& Algorithms}, 4\penalty0 (4):\penalty0
  359--412, 1993.

\bibitem[Lov{\'a}sz and Vempala(2006)]{lovasz2006hit}
L{\'a}szl{\'o} Lov{\'a}sz and Santosh Vempala.
\newblock Hit-and-run from a corner.
\newblock \emph{SIAM Journal on Computing}, 35\penalty0 (4):\penalty0
  985--1005, 2006.

\bibitem[Lov{\'a}sz and Vempala(2007)]{lovasz2007geometry}
L{\'a}szl{\'o} Lov{\'a}sz and Santosh Vempala.
\newblock The geometry of logconcave functions and sampling algorithms.
\newblock \emph{Random Structures \& Algorithms}, 30\penalty0 (3):\penalty0
  307--358, 2007.

\bibitem[Ma et~al.(2021)Ma, Chatterji, Cheng, Flammarion, Bartlett, and
  Jordan]{ma2021there}
Yi-An Ma, Niladri~S Chatterji, Xiang Cheng, Nicolas Flammarion, Peter~L
  Bartlett, and Michael~I Jordan.
\newblock Is there an analog of {N}esterov acceleration for gradient-based
  {MCMC}?
\newblock \emph{Bernoulli}, 27\penalty0 (3):\penalty0 1942--1992, 2021.

\bibitem[McSherry and Talwar()]{McSherryT07}
Frank McSherry and Kunal Talwar.
\newblock Mechanism design via differential privacy.
\newblock In \emph{Symposium on Foundations of Computer Science}.

\bibitem[Mengersen and Tweedie(1996)]{mengersen1996rates}
Kerrie~L Mengersen and Richard~L Tweedie.
\newblock Rates of convergence of the {H}astings and {M}etropolis algorithms.
\newblock \emph{The Annals of Statistics}, 24\penalty0 (1):\penalty0 101--121,
  1996.

\bibitem[Mou et~al.(2022)Mou, Flammarion, Wainwright, and
  Bartlett]{mou2022improved}
Wenlong Mou, Nicolas Flammarion, Martin~J Wainwright, and Peter~L Bartlett.
\newblock Improved bounds for discretization of {L}angevin diffusions:
  Near-optimal rates without convexity.
\newblock \emph{Bernoulli}, 28\penalty0 (3):\penalty0 1577--1601, 2022.

\bibitem[Neal et~al.(2011)]{neal2011mcmc}
Radford~M Neal et~al.
\newblock {MCMC} using {H}amiltonian dynamics.
\newblock \emph{Handbook of Markov Chain Monte Carlo}, 2\penalty0
  (11):\penalty0 2, 2011.

\bibitem[Nesterov(2003)]{nesterov2003introductory}
Yurii Nesterov.
\newblock \emph{Introductory lectures on convex optimization: A basic course},
  volume~87.
\newblock Springer Science \& Business Media, 2003.

\bibitem[Nguyen et~al.(2021)Nguyen, Dang, and Chen]{nguyen2021unadjusted}
Dao Nguyen, Xin Dang, and Yixin Chen.
\newblock Unadjusted langevin algorithm for non-convex weakly smooth
  potentials.
\newblock \emph{arXiv preprint arXiv:2101.06369}, 2021.

\bibitem[Parisi(1981)]{parisi1981correlation}
Giorgio Parisi.
\newblock Correlation functions and computer simulations.
\newblock \emph{Nuclear Physics B}, 180\penalty0 (3):\penalty0 378--384, 1981.

\bibitem[Raginsky et~al.(2017)Raginsky, Rakhlin, and
  Telgarsky]{raginsky2017non}
Maxim Raginsky, Alexander Rakhlin, and Matus Telgarsky.
\newblock Non-convex learning via stochastic gradient {L}angevin dynamics: a
  nonasymptotic analysis.
\newblock In \emph{Conference on Learning Theory}, pages 1674--1703. PMLR,
  2017.

\bibitem[Robert and Casella(1999)]{robert1999monte}
Christian~P Robert and George Casella.
\newblock \emph{Monte {C}arlo statistical methods}, volume~2.
\newblock Springer, 1999.

\bibitem[Roberts and Rosenthal(2004)]{roberts2004general}
Gareth~O Roberts and Jeffrey~S Rosenthal.
\newblock General state space {M}arkov chains and {MCMC} algorithms.
\newblock \emph{Probability surveys}, 1:\penalty0 20--71, 2004.

\bibitem[Roberts and Tweedie(1996{\natexlab{a}})]{roberts1996exponential}
Gareth~O Roberts and Richard~L Tweedie.
\newblock Exponential convergence of {L}angevin distributions and their
  discrete approximations.
\newblock \emph{Bernoulli}, pages 341--363, 1996{\natexlab{a}}.

\bibitem[Roberts and Tweedie(1996{\natexlab{b}})]{roberts1996geometric}
Gareth~O Roberts and Richard~L Tweedie.
\newblock Geometric convergence and central limit theorems for multidimensional
  {H}astings and {M}etropolis algorithms.
\newblock \emph{Biometrika}, 83\penalty0 (1):\penalty0 95--110,
  1996{\natexlab{b}}.

\bibitem[Ryffel et~al.(2022)Ryffel, Bach, and Pointcheval]{RyffelBP22}
Theo Ryffel, Francis Bach, and David Pointcheval.
\newblock Differential privacy guarantees for stochastic gradient {L}angevin
  dynamics.
\newblock arXiv:2201.11980, 2022.

\bibitem[Salim and Richt{\'a}rik(2020)]{salim2020primal}
Adil Salim and Peter Richt{\'a}rik.
\newblock Primal dual interpretation of the proximal stochastic gradient
  {L}angevin algorithm.
\newblock \emph{Advances in Neural Information Processing Systems},
  33:\penalty0 3786--3796, 2020.

\bibitem[Sason and Verd{\'u}(2016)]{sason2016f}
Igal Sason and Sergio Verd{\'u}.
\newblock $f$-divergence inequalities.
\newblock \emph{IEEE Transactions on Information Theory}, 62\penalty0
  (11):\penalty0 5973--6006, 2016.

\bibitem[Schneider(2014)]{schneider2014convex}
Rolf Schneider.
\newblock \emph{Convex bodies: the {B}runn--{M}inkowski theory}.
\newblock Number 151. Cambridge University Press, 2014.

\bibitem[Shen and Lee(2019)]{shen2019randomized}
Ruoqi Shen and Yin~Tat Lee.
\newblock The randomized midpoint method for log-concave sampling.
\newblock \emph{Advances in Neural Information Processing Systems}, 32, 2019.

\bibitem[Sordello et~al.(2021)Sordello, Bu, and Dong]{Sordello21}
Matteo Sordello, Zhiqi Bu, and Jinshuo Dong.
\newblock Privacy amplification via iteration for shuffled and online {PNSGD}.
\newblock \emph{ArXiv}, abs/2106.11767, 2021.

\bibitem[Van~Erven and Harremos(2014)]{van2014renyi}
Tim Van~Erven and Peter Harremos.
\newblock R{\'e}nyi divergence and {K}ullback-{L}eibler divergence.
\newblock \emph{IEEE Transactions on Information Theory}, 60\penalty0
  (7):\penalty0 3797--3820, 2014.

\bibitem[Vempala(2005)]{vempala2005geometric}
Santosh Vempala.
\newblock Geometric random walks: a survey.
\newblock \emph{Combinatorial and computational geometry}, 52\penalty0
  (573-612):\penalty0 2, 2005.

\bibitem[Vempala and Wibisono(2019)]{VempalaW19}
Santosh Vempala and Andre Wibisono.
\newblock Rapid convergence of the unadjusted {L}angevin algorithm:
  Isoperimetry suffices.
\newblock In \emph{Advances in Neural Information Processing Systems 32}, pages
  8092--8104. 2019.

\bibitem[Wibisono(2018)]{wibisono2018sampling}
Andre Wibisono.
\newblock Sampling as optimization in the space of measures: The {L}angevin
  dynamics as a composite optimization problem.
\newblock In \emph{Conference on Learning Theory}, pages 2093--3027. PMLR,
  2018.

\bibitem[Williams(1991)]{williams1991probability}
David Williams.
\newblock \emph{Probability with martingales}.
\newblock Cambridge University Press, 1991.

\bibitem[Wu et~al.(2021)Wu, Schmidler, and Chen]{wu2021minimax}
Keru Wu, Scott Schmidler, and Yuansi Chen.
\newblock Minimax mixing time of the {M}etropolis-adjusted langevin algorithm
  for log-concave sampling.
\newblock \emph{arXiv preprint arXiv:2109.13055}, 2021.

\bibitem[Ye and Shokri(2022)]{YeS22}
Jiayuan Ye and Reza Shokri.
\newblock Differentially private learning needs hidden state (or much faster
  convergence.
\newblock arXiv:2203.05363, 2022.

\bibitem[Zhang et~al.(2020)Zhang, Peyr{\'e}, Fadili, and
  Pereyra]{zhang2020wasserstein}
Kelvin~Shuangjian Zhang, Gabriel Peyr{\'e}, Jalal Fadili, and Marcelo Pereyra.
\newblock Wasserstein control of mirror {L}angevin {M}onte {C}arlo.
\newblock In \emph{Conference on Learning Theory}, pages 3814--3841. PMLR,
  2020.

\end{thebibliography}

\end{document}